\definecolor{darkgreen}{rgb}{0,0.5,0}
\definecolor{darkblue}{rgb}{0,0,0.7}
\definecolor{darkred}{rgb}{0.9,0.1,0.1}
\newtheorem{theorem}{Theorem}
\newtheorem{proposition}[theorem]{Proposition}
\newtheorem{lemma}[theorem]{Lemma}
\theoremstyle{definition}
\newtheorem{remark}[theorem]{Remark}
\newcommand{\ssref}[1]{Subsection~\ref{ss.#1}}
\newcommand{\tref}[1]{Theorem~\ref{t.#1}}
\newcommand{\pref}[1]{Proposition~\ref{p.#1}}
\newcommand{\lref}[1]{Lemma~\ref{l.#1}}
\newcommand{\cref}[1]{Corollary~\ref{c.#1}}
\newcommand{\sref}[1]{Section~\ref{s.#1}}
\newcommand{\eref}[1]{(\ref{e.#1})}
\numberwithin{equation}{section}
\numberwithin{theorem}{section}
\newcommand{\N}{\mathbb{N}}
\newcommand{\R}{\mathbb{R}}
\newcommand{\E}{\mathbb{E}}
\renewcommand{\a}{\bar{a}_{ij}}
\newcommand{\cm}{\bar{c}}
\newcommand{\h}{\mathfrak{H}}
\renewcommand{\H}{Y_T}
\renewcommand{\E}{E_T}
\newcommand{\eps}{\varepsilon}
\newcommand{\test}[1][]{%
\ifthenelse{\equal{#1}{}}{omitted}{given}%
}
\newcommand{\derv}[2]{\partial_x^{\alpha{#1}}\partial_v^{\beta{#2}}}
\newcommand{\der}{\derv{}{}}
\renewcommand{\d}[1]{\ensuremath{\operatorname{d}\!{#1}}}
\DeclarePairedDelimiter{\norm}{\lVert}{\rVert}
\DeclareMathOperator{\ini}{in}
\newcommand{\jap}[1]{\left\langle {#1} \right\rangle}
\renewcommand{\bar}{\overline}
\renewcommand{\tilde}{\widetilde}
\renewcommand{\part}{\partial}
\begin{document}
\title[Local existence for the Landau Equation with hard potentials]{Local existence for the Landau Equation\break with hard potentials}
\begin{abstract}
We consider the spatially inhomogeneous Landau equation with hard potentials (i.e. with $\gamma \in [0,1]$) on the whole space $\R^3$. We prove existence and uniquenss of solutions for a small time, assuming that the initial data is in a weighted tenth-order Sobolev space and has exponential decay in the velocity variable. 

In constrast to the soft potential case, local existence for the hard potentials case has been missing from the literature. This is because the moment loss issue is the most severe for these potentials. To get over this issue, our proof relies on a weighted hierarchy of norms that depends on the number of spatial and velocity derivatives in an asymmetric way. This hierarchy lets us take care of the terms that are affected by the moment loss issue the most. These terms do not give in to methods applied to study existence of solutions to Landau equation with soft potentials  and are a major reason why the local existence problem was not known for the case of hard potentials.
 \end{abstract}

\author[S. Chaturvedi]{Sanchit Chaturvedi}
\address[Sanchit Chaturvedi]{450 Jane Stanford Way, Bldg 380, Stanford, CA 94305}
\email{sanchat@stanford.edu}
\keywords{}
\subjclass[2010]{}
\date{\today}

\maketitle
\section{Introduction}
We study the Landau equation for the particle density $f(t,x,v)\geq 0$ in the whole space $\R^3$. $t\in \R_{\geq 0}, x\in \R^3$ and $v \in \R^3$. The Landau equation is as follows\\
\begin{equation}\label{e.landau_collision}
\part_t f+v_i\part_{x_i}f=Q(f,f).
\end{equation}
$Q(f,f)$ is the collision kernel given by,\\
\begin{equation*}
Q(f,f)(v):=\part_{v_i}\int a_{ij}(v-v_*)(f(v_*)(\part_{v_j}f)(v)-f(v)(\part_{v_j}f)(v_*))\d v_*,
\end{equation*}
and $a_{ij}$ is the non-negative symmetric matrix defined by
\begin{equation}\label{e.matrix_a}
a_{ij}(z):=\left(\delta_{ij}-\frac{z_iz_j}{|z|^2}\right)|z|^{\gamma+2}.
\end{equation}

In all the expressions above (and in the rest of the paper), we use the convention that repeated lower case Latin indices are summed over $i,j=1,2,3.$

The quantity $\gamma$ encodes the type of interaction potential between the particles. Physically, the relevant regime is $[-3,1]$. We will be concerned with the regime $[0,1]$, that is the Maxwellian ($\gamma=0$) and all of the hard potentials.

For us it will be convenient to work with a slightly modified but equivalent version of \eref{landau_collision} which is as follows\\
\begin{equation}\label{e.landau}
\part_t f+v_i\part_{x_i}f=\a\part^2_{v_iv_j}f-\bar{c}f,
\end{equation}
where $c:=\part^2_{z_iz_j}a_{ij}(z)$, $\a:=a_{ij}*f$ and $\bar{c}:=c*f$.

In this paper, we give local in time solution to the Cauchy problem for the Landau equation, i.e. we study the solutions arising from prescribed regular initial data:
$$f(0,x,v)=f_{\ini}(x,v)\geq 0.$$

Our main result is that for sufficiently regular initial data, we have local existence and uniqueness for \eref{landau}. That is we have a unique, non-negative solution to the Cauchy problem for a small time.
\begin{theorem}\label{t.local}
Let $M_0>0$, $\gamma \in [0,1]$, $d_0>0$, $f_{\ini}$ be such that\\
\begin{equation*}
\sum\limits_{|\alpha|+|\beta|\leq 10}\norm{\jap{v}^{20-(\frac{3}{2}+\delta_\gamma)|\alpha|-(\frac{1}{2}+\delta_\gamma)|\beta|}\der(e^{d_0\jap{v}}f_{\ini})}_{L^2_xL^2_v}^2\leq M_0,
\end{equation*}
where,
\[   
\delta_\gamma= 
     \begin{cases}
       0 \hspace{1em} \text{if } \gamma\in[0,1)\\
	  \eta \text{ for some }\eta>0 \hspace{1em} \text{if } \gamma=1.
     \end{cases}
\]

Then for some $T>0$, depending on $\gamma, d_0$ and $M_0$, there is a non-negative solution, $f$ to \eref{landau} with $f(0,x,v)=f_{\ini}(x,v)$. 

In addition, $fe^{(d_0-\kappa t)\jap{v}}$, is unique in the energy space $\tilde E_T^4\cap C^0([0,T);\tilde Y^4_{x,v})$ ($\kappa$ is a large constant chosen later, that depends on $\gamma,d_0$ and $M_0$).\\
Moreover, $fe^{(d_0-\kappa t)\jap{v}}\in E_T\cap C^0([0,T);Y_{x,v})$ .
\end{theorem}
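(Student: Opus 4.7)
The plan is to construct the solution by a Picard iteration on a linearized problem, working with the conjugated unknown $g := e^{(d_0 - \kappa t)\jap{v}} f$ so that the exponential velocity decay is tracked explicitly. In this variable \eref{landau} becomes a transport-diffusion equation for $g$ whose coefficients $\bar a_{ij}^n, \bar c^n$ are built from the previous iterate $f^n$, together with a coercive zeroth-order term of size $\kappa\jap{v}^{\gamma+3}$ produced by pulling $\part_t$ and $\bar a_{ij}^n\part^2_{v_iv_j}$ past the exponential. One takes $g^0$ to be a smooth regularization of $e^{d_0\jap{v}}f_{\ini}$ and defines $g^{n+1}$ as the solution of the linear equation with coefficients from $f^n$. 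Linear solvability at each step is standard: $\bar a_{ij}^n$ is uniformly elliptic for fixed $n$ and the resulting equation is a kinetic Fokker-Planck equation with smooth coefficients.

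The heart of the argument is a uniform-in-$n$ a priori estimate in the asymmetric weighted hierarchy
\begin{equation*}
\sum_{|\alpha|+|\beta|\leq 10}\norm{\jap{v}^{20-(\tfrac{3}{2}+\delta_\gamma)|\alpha|-(\tfrac{1}{2}+\delta_\gamma)|\beta|}\part_x^\alpha\part_v^\beta g^{n+1}(t)}_{L^2_xL^2_v}^2,
\end{equation*}
with associated dissipation carrying an extra $\jap{v}^{\gamma+2}|\nabla_v(\cdot)|^2$. Differentiating the equation for $g^{n+1}$ by $\part_x^\alpha\part_v^\beta$ and testing against the natural squared weight produces four classes of terms: (i) the good dissipation from $\bar a_{ij}^n$; (ii) the coercive $\kappa\jap{v}^{\gamma+3}$ gain from the exponential conjugation; (iii) transport commutators $[v\cdot\part_x,\part_v^\beta]$, which convert velocity into spatial derivatives; and (iv) commutators of $\part_x^\alpha\part_v^\beta$ with $\bar a_{ij}^n$ and with the weight, which, because $\bar a_{ij}^n\sim \jap{v}^{\gamma+2}$, are the source of the moment loss. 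The asymmetric choice of weights --- $\jap{v}^{3/2+\delta_\gamma}$ per spatial derivative versus $\jap{v}^{1/2+\delta_\gamma}$ per velocity derivative --- is dictated precisely so that the transport commutator in (iii), which costs one power of $|v|$ when swapping a $\part_v$ for a $\part_x$, lands inside the same level of the hierarchy; the gap of exactly one power between spatial and velocity costs is what makes this possible. With this tuning every commutator is either absorbed into (i) or dominated by (ii) provided $\kappa$ is chosen large in terms of $d_0$, $\gamma$, and $M_0$. Closing a Grönwall inequality on a short time interval gives uniform bounds on $g^n$ in $E_T\cap C^0([0,T);Y_{x,v})$.

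The main obstacle, and the reason a naive symmetric energy does not close, is the moment loss in the terms with many spatial derivatives: expressions like $(\part_x^\alpha \bar a_{ij}^n)\part_v^\beta\part^2_{v_iv_j} g^{n+1}$ with $|\alpha|$ large carry a full factor of $\jap{v}^{\gamma+2}$ from $\bar a^n$ and cannot be absorbed by a symmetric Sobolev norm. They do close under the asymmetric hierarchy for two reasons: (a) the exponential $e^{(d_0-\kappa t)\jap{v}}$ decay of $f^n$ makes $\part_x^\alpha \bar a_{ij}^n$ far better behaved in $v$ than its worst-case polynomial growth suggests, and (b) the weight gap allows a Leibniz expansion in which every spatial derivative landing on $\bar a^n$ frees enough velocity weight on $g^{n+1}$ to pair with the dissipation. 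Once the uniform high-order bounds are in hand, convergence of $\{g^n\}$ follows by estimating $g^{n+1}-g^n$ in the weaker $\tilde E_T^4\cap C^0([0,T);\tilde Y^4_{x,v})$ norm: the difference solves a linear equation whose source and coefficient increments depend linearly on $g^n-g^{n-1}$ and on the already-bounded high-order norms of $g^n,g^{n-1}$, yielding a contraction for $T$ small.

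Non-negativity is obtained by a maximum-principle argument applied to the linearized equation at each iteration --- its bounded-coefficient form preserves non-negativity, and this property is stable under passing to the limit. Uniqueness in the stated class follows from the same contraction argument applied to two putative solutions: subtracting and testing against the lower-order asymmetric weight produces, thanks to the already-controlled top-order bounds, a Grönwall inequality on the difference in $\tilde E_T^4$ that forces it to vanish, completing \tref{local}.
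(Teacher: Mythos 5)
Your high-level architecture (conjugation by $e^{(d_0-\kappa t)\jap{v}}$, asymmetric weight hierarchy, Picard iteration, contraction in a lower-order norm) matches the paper's. However, there is a fatal gap at the very point where the proof would need to close, plus several secondary misstatements.

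The central claim that the conjugation produces ``a coercive zeroth-order term of size $\kappa\jap{v}^{\gamma+3}$'' is false, and the entire subsequent reasoning leans on it. Differentiating $e^{(d_0-\kappa t)\jap{v}}$ in time yields $-\kappa\jap{v}$; this gives a coercive term $\kappa\jap{v}\,g$ on the left of \eref{eq_for_g}, gaining exactly one power of $\jap{v}$, not $\gamma+3$. Pulling $\a\,\part^2_{v_iv_j}$ past the exponential produces lower-order terms of the schematic form $d(t)^2\,\a\, v_iv_j/\jap{v}^2\,g$ and $d(t)\,\bar a_{ii}/\jap{v}\,g$; the former is $O(\jap{v}^\gamma)$ (using the anisotropy $a_{ij}(z)z_iz_j=0$, cf.\ \eref{pw_bound_a_2v}), carries no factor of $\kappa$, and appears with the wrong sign to be coercive --- it has to be absorbed, which only works because $\gamma\le 1$ --- while the latter is sign-definite only because $h\ge 0$ and is discarded, not exploited. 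Were a $\kappa\jap{v}^{\gamma+3}$ gain actually available, the moment-loss terms $\part^\alpha_x\a\cdot\part^2_{v_iv_j}g$, which are of order $\jap{v}^{\gamma+2}$, would be absorbed outright with no weight hierarchy needed at all. The hierarchy exists precisely because the gain is only one power of $\jap{v}$. Your explanation (a), that the exponential decay of $f^n$ makes $\part_x^\alpha\a$ ``far better behaved in $v$,'' is likewise incorrect: $\a=a_{ij}*f$ inherits the $\jap{v}^{\gamma+2}$ growth of $a_{ij}$ regardless of how fast $f$ decays --- the decay of $f$ only controls the $v_*$-integral, not the $v$-growth.

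Two further points. First, $\a$ is not uniformly elliptic: $a_{ij}(z)$ annihilates $z$, so the diffusion is degenerate; the paper therefore cannot invoke standard kinetic Fokker--Planck solvability directly, and instead adds an artificial viscosity $\eps\Delta_{x,v}$, solves on $\Omega_R$ with a hierarchy of cutoffs $\psi_m$, and takes $R\to\infty$, $\eps\to 0$. You also do not use (and the paper does not obtain) a coercive dissipation $\jap{v}^{\gamma+2}|\nabla_v\cdot|^2$; the good quadratic form $\int\part_{v_i}\der G\,\a\,\part_{v_j}\der G$ is merely nonnegative and is dropped. Second, without the bounded-domain/vanishing-viscosity step you cannot see why $\delta_\gamma>0$ is needed for $\gamma=1$: it is there solely to make the boundary error terms, which scale like $R^{\gamma-1-\delta_\gamma}$, vanish as $R\to\infty$; your argument gives no role to $\delta_\gamma$ at all, which is a sign the moment bookkeeping is not actually being done.
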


See \sref{notation} for the definition of spaces $E_T$, $Y_{x,v}$, $\tilde E^4_T$ and $\tilde Y^4_{x,v}$.

In the case of inhomogeneous equations, this is the first existence result for a binary collisional model featuring a hard and long-range potential. The major analytic difficulty in treating hard potentials for inhomogenous case stems from the fact that the velocity growth of the coefficients is too high to be taken care of by techniques employed for the soft potentials in \cite{AMUXY10,AMUXY13,HeSnTa17}. We overcome this by employing weighted (in velocity) energy norms that `penalize' the spatial and velocity derivatives differently (see \sref{diff} for more details).
\subsection{Related works} We now give a rather inexhaustive list of pertinent results that deal with a long range interaction potential.

\textbf{Local existence results.} One of the first relevant local existence results is due to the Alexandre--Morimoto--Ukai--Xu--Yang (AMUXY). The group proved in \cite{AMUXY10,AMUXY13} local existence for the non cut-off Boltzmann equation when the angular singularity parameter, $s$ is in the range $\left(0,\frac{1}{2}\right)$. More specifically, they prove for sufficiently regular initial data that is bounded by a Gaussian, there exists a local solution that stays bounded by a time dependent Gaussian. Moreover, they also prove $C^\infty$ smoothing for solutions that satisfy a non-vacuum condition.

For the Landau equation, Henderson--Snelson--Tarfulea proved the local existence, mass-spreading and $C^\infty$ smoothing of solutions to Landau equation with soft-potentials in \cite{HeSnTa17}. To prove local existence, they adapt the time-dependent Gaussian idea that appeared in \cite{AMUXY10, AMUXY13}.They also prove a mass-spreading theorem that helps them dispense with the non-vacuum contition required to prove $C^\infty$ smoothing.

Very recently, Henderson--Snelson--Tarfulea proved a local existence result for Boltzmann equation with $s\in (0,1)$ and $-\frac{3}{2}-2s\leq\gamma<0$ in \cite{HeSnTa19}. Specifically, they prove that if initial data is polynomially decaying in velocity, i.e. is bounded in a high weighted $L^\infty$ norm  and has five derivatives bounded in a weighted energy norm then there exists a unique and non-negative local in time solution.

\textbf{Global stability results for Landau equation.} The global regularity for the inhomogenous Landau is a well-known open problem. Most available global results are in the perturbative regime. Specifically, the stability of Maxwellians is quite well understood.

The stability problem of Maxwellians on a periodic box was established in Guo's seminal work, \cite {Guo02.1}. Since then Guo's nonlinear method has seen considerable success in understanding near Maxwellian regime \cite{Guo02,Guo03,Guo03.2,StGu04,StGu06,Guo12,StZh13}. Remarkably,this regime is well understood in the case of non-cutoff Boltzmann equation as well see, \cite{GrSt11,AMUXY12,AMUXY12.2,AMUXY12.3}.

Another perturbative result is the stability for vacuum. In contrast to Maxwelians, the stability problem for vacuum was only recently studied by Luk, in \cite{Lu18}, who proved the stability of vacuum in the case of moderately soft potentials ($\gamma \in (-2,0)$). Luk combined $L^\infty$ and $L^2$ methods to prove global existence of solutions near vacuum. Moreover, Luk shows that the main mechanism is dispersion by proving that the long-time limit of the solution to Landau equation solves the transport equation and that the solution does not necessarily approach global Maxwellians. An interesting feature of Luk's paper is the use of a hierarchy in his norm which is necessary for gaining enough time decay. We use a similar weighted norm inspired from his techniques.

\textbf{Hard Potentials for Landau.} The spatially homogeneous Landau equation with hard potentials was studied in detail by Desvillettes--Villani in \cite{DeVi00,DeVi00.2}, who showed existence and smoothness of solution with suitable initial data, as well as the appearance and propagation of various moments and lower bounds. The existence of a lower bound on the coefficient matrices independent of space was essential in proving such results and this is one reason why the existence theory for hard potentials in inhomogeneous Landau was missing from the literature.

For the inhomogeneous case Snelson proved in \cite{Sn18} that the solution to \eref{landau} with hard potentials (under assumptions of upper and lower bounded mass, energy and entropy) satisfies gaussian upper and lower bounds. The main feature of the paper is the appearance of these bounds which is reminiscent of the spatially homogenous case.
\subsection{Future Directions} 
We now outline some open problems that could be interesting for a future work.
\begin{enumerate}
\item \textbf{Propogating Gaussian bounds instead of exponential ones.} We propogate exponential bounds instead of gaussian decay, which is in contrast to the local theory for soft potentials (see \cite{HeSnTa17, AMUXY10, AMUXY13}). It would be interesting to see if one can indeed propogate the expected Gaussian decay. For a more detailed discussion on this issue see \sref{diff}.

\item \textbf{$C^\infty$ smoothing of solutions to hard potentials.} A regular enough solution to \eref{landau}with hard potentials is expected to become instantaeously smooth as in the the case of soft potentials (see \cite{HeSnTa17}). It also seems plausible that a mass-spreading theorem (as in \cite{HeSnTa17}) also remains true for the hard potentials. In that case one could expect local in space gaussian upper and lower bounds because of the recent work \cite{Sn18} by Snelson.

\item \textbf{Local existence for non cut-off Boltzmann equation.} Until recently, the state of the art results for the local existence of solutions to non cut-off Boltzmann, \cite{AMUXY10,AMUXY13}, was due to the AMUXY group. In these results they can handle angular singularity, s, in the range $\left(0,\frac{1}{2}\right)$ and $\gamma+2s<1$.

Henderson--Snelson--Tarfulea, proved a local existence result for Boltzmann in \cite{HeSnTa19} for $s\in(0,1)$ and $-\frac{3}{2}-2s\leq\gamma<0$. This result is comparable to the soft potential result of the same authors in \cite{HeSnTa17}. A striking feature of the paper is that they can propagate polynomial bounds in contrast to Gaussian bounds for the case of Landau. The main reason why this is possible is that we have fractional derivatives (corresponding to $s$) for Boltzmann while we have full derivatives for Landau.

The authors of \cite{HeSnTa19} nicely leverage this fact and although they necessarily have to bound an $L^2_v$ term with higher velocity weights they can handle, the number of derivatives is a little lower than that for Landau. Thus this gain in derivatives makes up for the moment loss via an interpolation between polynomially-weighted $L^2$ estimates and polynomially-weighted $L^\infty$ estimates.

Landau equation can be considered ,at least formally, a limit of Boltzmann equation in the grazing collision limit, i.e. the limit as $s\to 1$. This fact in addition to the comparison between \cite{HeSnTa17} and \cite{HeSnTa19} strongly suggests that the velocity weight (or moment loss) issue is most severe for the Landau equation. So we are hopeful that the ideas developed in this paper will be useful in resolving the local existence issues in case of Boltzmann, especially for the case $0\leq \gamma\leq1$. We would also like to remark that the ideas developed here especially a hierarchical $L^2$ based approach might be useful in dispensing with the weighted $L^\infty$ norm that is required in \cite{HeSnTa19}. 

We finally note that the local existence question for Boltzmann equation with $\gamma\in (-3,\max(-3/2-2s,-3)]$ and $\gamma>0$ is still open.

\item \textbf{Global existence and stability of vacuum.} The stability of vacuum problem entails proving global existence for small data and that these solutions converge to a solution of a linear transport equation in the limit $t\to \infty$ (see \cite{Lu18} for more details). The stability problem for hard potentials already seems tractable with a slight variation in the ideas presented in this paper and will be treated in a future work.

We would also like to point out that the stability of vacuum problem is open for the non cut-off Boltzmann equation for any physically relevant regime.
\end{enumerate}
\subsection{Paper organization} The remainder of the paper is structured as follows.

In \sref{diff} we give an overview of the additional difficulties one has to overcome in the case of hard potentials and a proof strategy. In \sref{notation}, we introduce some notations that will be in effect throughout the paper. In \sref{set_up} we set-up the energy estimate and the error terms that we need to estimate. In \sref{coefficient_bounds}, we prove estimates for the coefficient matrix $\a$, $\cm$ and its higher derivatives. In \sref{errors}, we bound the error terms and finally in \sref{put}, we put everything together to prove \tref{local}.
\subsection{Acknowledgements} I would like to thank Jonathan Luk for suggesting this problem, for countless helpful discussions and for constant encouragement. I would also like to thank Panagiotis Dimakis for various stimulating discussions and for his feedback on the manuscript. In addition, I am grateful to Scott Armstrong and Cole Graham for their comments on an earlier version of the paper.
\section{Difficulties in hard potential and Proof strategy}\label{s.diff}
In this section we give a brief presentation of the ideas involved in resolution for existence for soft potentials, new difficulties that arise in the case of hard potentials, why they cannot be resolved with the techniques employed for the soft potentials in \cite{HeSnTa17} and the resolutions we propose.
\subsection{Review of the soft potential case} The existence theory of soft potentials for Landau equation was recently proved in \cite{HeSnTa17} by Henderson--Snelson--Tarfulea. Even at the level of soft potentials, the velocity growth issue for the coefficients is non-trivial to deal with. To see this, we fix a $\gamma\in (-2,0)$ and differentiate \eref{landau} by $\part^\alpha_x$, to get an equation of the form,
\begin{equation}\label{e.lan_ill}
\part_t\part^\alpha_x f+v_i\part_{x_i}f=\part^{\alpha}_x(\a \part^2_{v_iv_j}f)-\part^\alpha_x\cm f.
\end{equation}

To illustrate the issue, we focus on the term $\part^\alpha \a (\part^2_{v_iv_j}f)$. More specifically, for any $\gamma\in (-2,0)$, we have the bound $a_{ij}(v-v_*)\lesssim \jap{v}^{2+\gamma}\jap{v_*}^{2+\gamma}$ (see \pref{pointwise_estimates_a}) thus the best pointwise bound we can hope for is, $$|\part^\alpha_x\a|(t,x,v)\lesssim |v|^{2+\gamma}.$$

With this bound we cannot hope to close the estimates, since there is no term on the left hand side of \eref {lan_ill} that can absorb a term with so strong a growth in velocity. To overcome this, Henderson--Snelson--Tarfulea restrict the initial data class to functions bounded by a Gaussian and propogate bounds with the aid of a time dependent Gaussian that flattens out over time. This technique is inspired by earlier works of the AMUXY group on local existence of non-cutoff Boltzmann in \cite{AMUXY10,AMUXY13}.

Thus they consider the equation that $g=fe^{-(d_0-\kappa t)\jap{v}^2}$ satisfies, by plugging this ansatz into \eref{landau} to get,
\begin{equation}\label{e.fail_1}
\begin{split}
\partial_tg+v_i\partial_{x_i}g+\kappa\jap{v}^2g&=\a [f]\partial^2_{v_iv_j}g-\cm [f]g-2d(t)\a [f]\jap{v}\partial_{v_j} g
\\& \quad-d(t)\left(\delta_{ij}-2d(t)v_iv_j\right)\a [f]g.
\end{split}
\end{equation}

The term $\kappa\jap{v}^2g$ arises due to differentiation of the time dependent Gaussian and is key to helping us take care of the problem discussed before and close the estimates. Although, it is not clear a priori, but due to the anisotropy of the matrix $a$, they are able to control the term involving $\a v_iv_j$. For more details on this see \cite{HeSnTa17} or \pref{pointwise_estimates_a}.
\subsection{The new difficulties of hard potentials}\label{ss.sad} The main reason why the aforementioned techniques are not very useful for handling the hard potentials case is that the coefficient matrix has too strong a growth to even be tamed by the time dependent Gaussian. We specialize to the case when $\gamma=1$ but the same issues are present for the whole hard potentials regime.\\
In this case, $$a_{ij}(z)=\left(\delta_{ij}-\frac{z_iz_j}{|z|^2}\right)|z|^{3}.$$ 

Again consider \eref{fail_1}. Applying $\part^\alpha_x\part^\beta_v$, with $|\alpha|=|\beta|=2$, to \eref{fail_1} we get,
\begin{equation}\label{e.fail_2}
\part_t \der g+v_i\part_{x_i}\der g+\kappa\jap{v}^2\der g=\der (\a\part^{v_iv_j}g)-\der (\cm g)+\text{ other terms}.
\end{equation}

To set up energy estimates, we multiply \eref{fail_2} by $\der g$ and integrate in time, space and velocity. To understand why this falls short, we look at the term, $$I=\left|\int_0^T\int_x\int_v \part^\alpha_x\part^\beta_v g(\part^\alpha_x \a)\part^\beta_v\part^2_{v_iv_j}g\d v\d x\d t\right|.$$

For the case of $\gamma=1$, we again have the bound $a_{ij}(v-v_*)\lesssim \jap{v}^3\jap{v_*}^3$ (see \pref{pointwise_estimates_a}) thus, $$|\part^\alpha_x\a|(t,x)\lesssim \jap{v}^3.$$

Since both terms involving derivatives of $g$ has four derivatives, we need to estimate them in $L^2_xL^2_v$. We thus have the following estimate,
$$I\lesssim \norm{\jap{v}^{\frac{3}{2}}\der  g}_{L^2([0,T];L^2_xL^2_v)}\norm{\jap{v}^{\frac{3}{2}}\part^\beta_v\part^2_{v_iv_j}  g}_{L^2([0,T];L^2_xL^2_v)}.$$

Now the best term on the left hand side is of the form $\norm{\jap{v}\part^\alpha_x\part^\beta_v  g}_{L^2([0,T];L^2_xL^2_v)}$, we have no hope of closing this estimate.
\subsection{A hopeful term}\label{ss.hope} Although things look quite bleak, a trivial observation is quite instrumental in coming up with the hierarchy of weighted norms. That is, whenever we have a spatial derivative hitting $\a$, then we have one less spatial derivative hitting on $\part^2_{v_iv_j}g$. Thus we would like to devise a weight function that uses this information.

Moreover we also note that taking velocity derivatives of $\a$ is helpful. Specifically, we again consider the example from last subsection but this time, assume both velocity derivatives fall on $\a$ (and say none of the spatial ones do). This term also appears due to integration by parts at the highest order.\\
Let, $$\tilde I=\left|\int_0^T\int_x\int_v \part^\alpha_x\part^\beta_v g(\part^\beta_v \a)\part^\alpha_x\part^2_{v_iv_j}g\d v\d x\d t\right|.$$

For the case of $\gamma=1$, we have the bound $\part^\beta_{v}a_{ij}(v-v_*)\lesssim \jap{v}\jap{v_*}$ (see \pref{pointwise_estimates_a}) thus, $$|\part^\beta_v\a|(t,x,v)\lesssim \jap{v}.$$

Thus we get the following bound,
\begin{align*}
\tilde I&\lesssim \norm{\jap{v}^{\frac{1}{2}}\part^\alpha_x\part^\beta_v  g}_{L^2([0,T];L^2_xL^2_v)}\norm{\jap{v}^{\frac{1}{2}}\part^\beta_v\part^2_{v_iv_j}  g}_{L^2([0,T];L^2_xL^2_v)}.
\end{align*}

This time around, we can absorb, this term on the left hand side by the term of the form  $\norm{\jap{v}\der  g}_{L^2([0,T];L^2_xL^2_v)}$. In fact, we can get away with just $\jap{v}^{\frac{1}{2}}$ weight in the norm (this will be useful later).
\subsection{A hierarchy of weighted norms}\label{ss.hie} As a lesson from above examples, we take away that taking velocity derivatives of the coefficient matrix is `good' and taking spatial derivatives does not do anything.

So a way to get over this problem might be to consider weighted Sobolev norms where we `penalize' taking spatial derivatives in some way, i.e we should expect to bound the terms with more spatial derivatives in a weaker weighted norm.

To motivate our proposed hierarchy of weighted norms, we work formally with \eref{fail_2}.

Again, we multiply by $\der g$ and integrate in $t,x$ and $v$ and we specialize to the case $\gamma=1$.\\
The need for a hierarchy of velocity weights can be seen with the aid of $\der(\a \part^2_{v_iv_j}g)$ term.

There are a few cases that will show us why we need the weights,
\begin{enumerate}
\item When no derivatives fall on $\a$ then in the energy estimate we get a term of the form $$\int_0^T \int \int \der g(\a)\part^2_{v_iv_j} \der g \d v \d x \d t.$$
In this case we necessarily have to perform integration by parts and we get two terms $-\int_0^T \int \int \part_{v_i}\der g(\a)\part_{v_j} \der g \d v \d x \d t$ which we can ignore assuming $f$ is non-negative and the other term is $\int_0^T \int \int \der g(\part^2_{v_iv_j}\a) \der g \d v \d x \d t$.\\
We have already encountered this term in the \ssref{hope} and as before, this term poses no problem.
\item Now assume we have two spatial derivatives hitting $\a$. That is we have a term of the form
$$\int_0^T \int \int \der g(\part^2_{x}\a)\part^2_{v_iv_j}\derv{''}{''} g \d v \d x \d t.$$
We have also seen this term before in \ssref{sad} as $I$ and it is one of the terms that prevents us from closing the estimates with the soft potential methodology.\\
Thus inspired from the hierarchy of $\jap{x-tv}$ weights in \cite{Lu18} we use velocity weights dependent on number of derivatives hitting $f$. If we had a weight function $\bar{\omega}_{\alpha,\beta}=20-2|\alpha|$, then performing the energy estimates with $\jap{v}^{2\bar{\omega}_{\alpha,\beta}}\der g$ we have a hope of closing the estimates.

Indeed the term $\part^2_{v_iv_j}\derv{''}{''} g$ has two less spatial derivatives than $\der g$ and hence can handle two extra velocity weights which knocks down the unaccounted for velocity weights to $\jap{v}$ as in the case 1.
\item Unfortunately, this hierarchy fails for a different configuration of derivatives. Assume that we hit the equation with $10$ velocity derivatives and look at the term when all the derivatives hit $\a$. We abuse the multi-index notation by writing $\part^{10}_v=\part^\beta_v$ for some $\beta$ such that $|\beta|=10$.\\
The term reads
$$\int_0^T \int \int  \jap{v}^{40}\part^{10}_v g(\part^{10}_v\a)\part^2_{v_iv_j} g \d v \d x \d t.$$

Since $\a$ and the first $g$ is being hit by top order derivatives, we have no choice but to estimate them in $L^2_x$, leaving us to estimate $\part^2_{v_iv_j} g$ in $L^\infty_x$. Since the number of derivatives hitting the second $f$ is only two we can use Sobolev embedding to estimate it in $L^2_x$. But then we are generating spatial derivatives and thus this term can not take as many as $20$ velocity weights.

This means that we need to penalize the number of velocity derivatives in our hierarchy as well. To deal with a problem we created a different problem!

Hence we need to come up with a hierarchy that penalizes spatial derivatives as well as velocity derivatives. But this penalty should be skewed in the direction of spatial derivatives because of the term of the following type $$\int_0^T \int \int  \jap{v}^{2\bar{\omega}_{\alpha,\beta}}\part^{10}_x g(\part^{2}_x\a)\part^2_{v_iv_j} \part^8_x g \d v \d x \d t.$$
Because we need the term $\part^2_{v_iv_j}\part^8_x f $ to handle two more $\jap{v}$ weights coming from $\part^2_x \a$ as before. Here we again abused the multi-index notation.
\end{enumerate}
With these things in mind, we claim that the hierarchy $\omega_{\alpha,\beta}=20-\frac{3}{2}|\alpha|-\frac{1}{2}|\beta|$ works. Indeed, in our example when only two spatial derivatives hit $\a$ then the weight $\part^2_{v_iv_j}\derv{''}{''} g$ can handle $20-\frac{3}{2}(|\alpha|-2)-\frac{1}{2}2$ weights, which is exactly two more than $\omega_{\alpha,\beta}$. 

Next, the case when all the derivatives are velocity derivatives, we again look at the term
$$\int_0^T \int \int  \jap{v}^{2\omega_{\alpha,\beta}}\part^{10}_v g(\part^{10}_v\a)\part^2_{v_iv_j} g \d v \d x \d t.$$ 
In this case $\omega_{\alpha,\beta}=15$ and $\part^2_x\part^2_{v_iv_j} g$ (which we get after applying Sobolev embedding in $x$) can handle $16$ $\jap{v}$ weights.\\
We claim that this hierarchy works and is used to estimate the error terms in \sref{errors}.
\begin{remark}
Note that with this definition of weight function, we need to work with more derivatives than is known for soft-potentials (see \cite{HeSnTa17,AMUXY13}). The reason is that when $\a$ is hit with top or next to top order derivatives then we need to estimate it in $L^2_x$ and the accompanying term in $L^\infty_x$ which is ultimately bounded in $L^2_x$ at the cost of two spatial derivatives and more crucially $\jap{v}^3$ weights.
\end{remark}
\subsection{Why we use exponential instead of Gaussian?}
Now, we discuss why we propagate exponential bound and not Gaussian. One reason is that since $\gamma\in [0,1]$, a time dependent exponential is enough. Second reason, is a bit more subtle, this has mostly to do with the term of the form $\a v_iv_j$ (this is what one would get instead of $\frac{v_iv_j}{\jap{v}^2}\a$ in \eref{eq_for_g} if one tried to propagate a gaussian bound, see \eref{lan_ill}).

Again we specialize to $\gamma=1$. We remind ourselves of the pointwise bound from \cite{Lu18} (Prop. 5.7), $$|\der(\a v_iv_j)|(t,x,v)\lesssim \int (|v|^{3}|v_*|^2+|v_*|^5)|\der f|(t,x,v_*)\d v_*.$$

This can also be obtained by adapting our proof of \eref{pw_bound_a_2v}. Now we see that we are in the same fix as in \ssref{sad}. There is no way of absorbing this on the LHS and hence no hope to close the estimates. On the other hand, if we try to propagate the exponential bound this issue goes away and we are able to almost close the estimates.

Propagating an exponential in conjunction with the weighted hierarchy of norms takes care of almost all the terms. There is still one problematic term that posed no issues in case of soft potential. Namely, $$-d(t)\int_0^T \int \int \jap{v}^{2\omega_{\alpha,\beta}}(\der g)^2 \frac{\bar{a}_{ii}}{\jap{v}}\d v\d x\d t.$$
This term arises when both $\part^2_{v_iv_j}$ hit $e^{-d(t)\jap{v}}$ and none hits $g$ (see \eref{eq_for_g}). Since $|\bar{a}_{ii}|\lesssim \jap{v}^3$ for $\gamma=1$ (see \pref{pointwise_estimates_a}), we have no hope of closing this estimate as the `good' term that arises by propagating an exponential can only handle an additional $\jap{v}$ weight.
But assuming $f$ is non-negative, we note that $\bar{a}_{ii}$ is non-negative and hence the whole integral has a good sign. Thus we can safely ignore this term. See \lref{J_term} for more details.
\subsection{Why we need $\delta_\gamma>0$ for $\gamma=1$?} We finally comment on why we need to choose a $\delta_\gamma>0$ for the case of $\gamma=1$ (see \tref{local}).

To prove existence of local solutions, we follow the strategy employed in \cite{HeSnTa17}. More concretely, we first show existence to a linearized problem and show that there is a solution to the full nonlinear problem using an iteration argument.

To show existence for a linearized equation we use the vanishing viscosity method. That is we add, a small viscosity to the linearized equation and solve the Cauchy problem on a bounded but arbitrary size domain. The idea is to get estimates independent of viscosity and the domain and take a weak limit in an appropriate norm and prove that this weak limit satisfies the linearized equation.

Since we solve our approximate linear equation on a bounded domain, there are some boundary error terms that we need to make sure vanish as we increase the size of the domain to infinity.

To make sure that these terms do decay, we need to modify the hierarchy in case of $\gamma=1$.\\
More specifically, we solve our linearized equation on $B_R$ (in phase space) and add $\eps\Delta_{x,v} \der g$ to leverage the existence theory of parabolic equations.

Since we solve on a bounded domain, we use cut-off functions to stay away from the boundary. We introduce a hierarchy of cut-off functions $\psi_m$ (where $m=|\alpha|+|\beta|$) in the spirit of \cite{HeSnTa17}.

So for energy estimates, we multiply by $\psi_m^2\jap{v}^{2\omega_{\alpha,\beta}}\der g$ instead of just $\jap{v}^{2\omega_{\alpha,\beta}}\der g$. The presence of these cut-off functions give rise to boundary terms when we perform integration by parts.

Note that the extra viscous term (after the required integration by parts) gives us a term on the left hand side of the form,
\begin{equation}\label{e.vis}
\norm{\jap{v}^{2\omega_{\alpha,\beta}}\psi_m\derv{'}{'} g}_{L^2([0,T];L^2_xL^2_v)}.
\end{equation}
Here $|\alpha'|+|\beta'|=|\alpha|+|\beta|+1$ and either $|\alpha'|=|\alpha|+1$ or $|\beta'|=|\beta|+1$. Now, by our definition of $\omega_{\alpha,\beta}$ in \ssref{hie} we see that $2\omega_{\alpha',\beta'}+1\leq 2\omega_{\alpha,\beta}$ (since we have at least one extra derivative). Since we bound this term in a higher weighted norm, we use this to show smallness of the boundary error terms via an induction.

But a typical boundary error term is of the form, 
\begin{equation}\label{e.bdy}
\left|\int_0^T\int \int \jap{v}^{2\omega_{\alpha,\beta}}\a\psi_m\part^2_{v_iv_j}\psi_m(\der g)^2 \d v\d x\d t\right|.
\end{equation}
We arrange our cut-off so that $|\part^2_{v_iv_j}\psi_m|\lesssim R^{-2}\psi_{m-1}$ and since we are in $B_R$, we have $\jap{v}\lesssim R$.\\
Using this, $|\a|\lesssim \jap{v}^{2+\gamma}$ (see \ssref{sad}) and Young's inequality we get,
\begin{align*}
\eref{bdy}\lesssim R^{\gamma-1}[\norm{\jap{v}^{\omega_{\alpha,\beta}}\jap{v}^{\frac{1}{2}}\psi_m\der g}_{L^2([0,T];L^2_xL^2_v)}+[\norm{\jap{v}^{\omega_{\alpha,\beta}}\jap{v}^{\frac{1}{2}}\psi_{m-1}\der g}_{L^2([0,T];L^2_xL^2_v)}].
\end{align*}

The term from the time dependent exponential takes care of the first term and we hope to use \eref{vis} to take of the second term. For $\gamma\in[0,1)$, we get smallness since we are taking a negative power of $R$, but for $\gamma=1$ we just get boundedness.

So if we change the hierarchy a little bit for $\gamma=1$, that is if we have $$\omega_{\alpha,\beta}=20-|\beta|(\frac{1}{2}+\eta)-|\alpha|(\frac{3}{2}+\eta),$$
for any $\eta>0$, we see that in \eref{vis}, $2\omega_{\alpha',\beta'}+1+2\eta\leq 2\omega_{\alpha,\beta}$.

Now we can do the boundary estimate a bit differently to get,
\begin{align*}
\eref{bdy}\lesssim R^{\gamma-1-\eta}[\norm{\jap{v}^{\omega_{\alpha,\beta}}\jap{v}^{\frac{1}{2}}\psi_m\der g}_{L^2([0,T];L^2_xL^2_v)}+[\norm{\jap{v}^{\omega_{\alpha,\beta}}\jap{v}^{\frac{1}{2}+\eta}\psi_{m-1}\der g}_{L^2([0,T];L^2_xL^2_v)}].
\end{align*}
Thanks to our new hierarchy of weights, we can both get smallness of the boundary terms and use the viscosity term to absorb the second term above on the left hand side (formally achieved by induction).
\section{Notations and spaces}\label{s.notation}
We introduce some notations that will be used throughout the paper.

\textbf{Norms}: We will use mixed $L^p$ norms, $1\leq p<\infty$ defined in the standard way:\\
$$\norm{h}_{L^p_v}:=(\int |h|^p \d v)^{\frac{1}{p}}.$$
For $p=\infty$, define
$$\norm{h}_{L^\infty_v}:=\text{ess} \sup_{v\in \R^3}|h|(v).$$

For mixed norms, the norm on the right is taken first. For example,
$$\norm{h}_{L^p_xL^q_v}:=(\int_{\R^3}(\int_{\R^3} |h|^q(x,v) \d v)^{\frac{p}{q}} \d x)^{\frac{1}{p}},$$
and
$$\norm{h}_{L^r([0,T];L^p_xL^q_v)}:=(\int_0^T(\int_{\R^3}(\int_{\R^3} |h|^q(x,v) \d v)^{\frac{p}{q}} \d x)^{\frac{r}{p}})^{\frac{1}{r}},$$
with obvious modifications when $p=\infty$, $q=\infty$ or $r=\infty$.\\

\textbf{Japanese brackets}. Define $$\jap{\cdot}:=\sqrt{1+|\cdot|^2}.$$

\textbf{Multi-indices}. Given a multi-index $\alpha=(\alpha_1,\alpha_2,\alpha_3)\in (\N\cup \{0\})^3$, we define $\part_x^\alpha=\part^{\alpha_1}_{x_1}\part^{\alpha_2}_{x_2}\part^{\alpha_3}_{x_3}$ and similarly for $\part^\beta_v$. Let $|\alpha|=\alpha_1+\alpha_2+\alpha_3$. Multi-indices are added according to the rule that if $\alpha'=(\alpha'_1,\alpha'_2,\alpha'_3)$ and $\alpha''=(\alpha''_1,\alpha''_2,\alpha''_3)$, then $\alpha'+\alpha''=(\alpha'_1+\alpha''_1,\alpha'_2+\alpha''_2,\alpha'_3+\alpha''_3)$.\\

\textbf{Velocity weights}. We define the velocity weight function that we use in our energy norm. Let $|\alpha|+|\beta|\leq 10$ we define $$\omega_{\alpha,\beta}:=20-(\frac{3}{2}+\delta_\gamma)|\alpha|-(\frac{1}{2}+\delta_\gamma)|\beta|,$$
where
\[   
\delta_\gamma= 
     \begin{cases}
       0 \hspace{1em} \text{if } \gamma\in[0,1)\\
	  \eta \text{ for some }\eta>0 \hspace{1em} \text{if } \gamma=1.
     \end{cases}
\]
\\
\textbf{Global energy norms}. We now describe the energy norm we use in $[0,T)\times\R^3\times\R^3$. 
\begin{equation*}
\norm{h}^2_{E_T^m}:=\sum \limits_{|\alpha|+|\beta|\leq m} \norm{\jap{v}^{\omega_{\alpha,\beta}}\der h}^2_{L^\infty([0,T);L^2_xL^2_v)}+\sum \limits_{|\alpha|+|\beta|\leq m} \norm{\jap{v}^{\frac{1}{2}}\jap{v}^{\omega_{\alpha,\beta}}\der h}^2_{L^2([0,T);L^2_xL^2_v)},
\end{equation*}
when $m=10$, it is dropped from the superscript.

It will also be convenient to define some other energy type norms. Namely,
$$\norm{h}_{Y^m_v}^2(t,x):=\sum \limits_{|\alpha|+|\beta|\leq m} \norm{\jap{v}^{\omega_{\alpha,\beta}}\der h}^2_{L^2_v}(t,x),$$
$$\norm{h}^2_{Y^m_{x,v}}(t):=\sum \limits_{|\alpha|+|\beta|\leq m} \norm{\jap{v}^{\omega_{\alpha,\beta}}\der h}^2_{L^2_xL^2_v}(t),$$
$$\norm{h}^2_{Y^m_{T}}:=\sum \limits_{|\alpha|+|\beta|\leq m} \norm{\jap{v}^{\omega_{\alpha,\beta}}\der h}^2_{L^\infty([0,T];L^2_xL^2_v)}.$$
and 
$$\norm{h}^2_{X^m_{T}}:=\sum \limits_{|\alpha|+|\beta|\leq m} \norm{\jap{v}^{\frac{1}{2}}\jap{v}^{\omega_{\alpha,\beta}}\der h}^2_{L^2([0,T];L^2_xL^2_v)}.$$

Finally we define the the weaker energy norms used in statement of \tref{local} for uniqueness of solutions.
\begin{equation*}
\norm{h}^2_{\tilde E_T^4}:=\sum \limits_{|\alpha|+|\beta|\leq 4} \norm{\jap{v}^{\tilde{\omega}_{\alpha,\beta}}\der h}^2_{L^\infty([0,T);L^2_xL^2_v)}+\sum \limits_{|\alpha|+|\beta|\leq 4} \norm{\jap{v}^{\frac{1}{2}}\jap{v}^{\tilde{\omega}_{\alpha,\beta}}\der h}^2_{L^2([0,T);L^2_xL^2_v)},
\end{equation*}
and $$\norm{h}^2_{\tilde Y^4_{x,v}}(t):=\sum \limits_{|\alpha|+|\beta|\leq 4} \norm{\jap{v}^{\tilde \omega_{\alpha,\beta}}\der h}^2_{L^2_xL^2_v}(t),$$
where we define $\tilde \omega_{\alpha,\beta}=10-\left(\frac{3}{2}+\delta_\gamma\right)|\alpha|-\left(\frac{1}{2}+\delta_\gamma\right)|\beta|$, for $|\alpha|+|\beta|\leq 4$.\\

\textbf{Local energy norms}. Since we need to carry out the viscosity method estimates in a bounded domain, we need local versions of the energy norms. We defer the definitions till \sref{errors}.\\

For two quantitites, $A$ and $B$ by $A\lesssim B$, we mean $A\leq C(d_0,\gamma)B$, where $C(d_0,\gamma)$ is a positive constant depending only on $d_0$ and $\gamma$.
\section{Set-up for the energy estimates}\label{s.set_up}
Since we plan to propagate a time dependent exponential bound we start by defining $g:=e^{d(t)\jap{v}}f$.\\
 Here $d(t)=d_0-\kappa t$, $t\in (0,T_0]$ and $T_0=\frac{d_0}{2\kappa}$($\kappa$ is some large constant, fixed later).\\
Substituting $f=e^{-d(t)\jap{v}}g$ in \eref{landau}, we obtain the equation that $g$ satisfies\\
\begin{equation} \label{e.eq_for_g}
\begin{split}
\partial_tg+v_i\partial_{x_i}g+\kappa\jap{v}g&=\a [f]\partial^2_{v_iv_j}g-\cm [f]g-2d(t)\a [f]\frac{v_i}{\jap{v}}\partial_{v_j} g
\\&\quad -d(t)\left(\frac{\delta_{ij}}{\jap{v}}-\left(d(t)+\frac{1}{\jap{v}}\right)\frac{v_iv_j}{\jap{v}^2}\right)\a [f]g.
\end{split}
\end{equation}
\begin{comment}
Applying $\partial^\alpha_x\part_v^\beta$ we get the equation\\
\begin{equation} \label{e.eq_for_diff_g}
\begin{aligned}
\partial_t \partial^\alpha_x\part_v^\beta g+v_i\partial_{x_i}\partial^\alpha_x\part_v^\beta g+\kappa\jap{v}\partial^\alpha_x\part_v^\beta g&=\underbrace{[\partial_t+v_i\partial_{x_i},\partial_x^\alpha\partial_v^\beta ]g}_{\text{Term } 1}+\underbrace{\kappa(\part^\alpha_x \part_v^\beta (\jap{v}g)-\jap{v}\part^\alpha_x \part_v^\beta  g)}_{\text{Term } 2}\\
&+\underbrace{(\part^\alpha_x \part_v^\beta  (\a \part^2_{v_iv_j}g)}_{\text{Term } 3}-\underbrace{\part^\alpha_x \part_v^\beta(\c g)}_{\text{Term } 4}-\underbrace{2(d(t))\part^\alpha_x \part_v^\beta \left(\a \frac{v_i}{\jap{v}}g\right)}_{\text{Term } 5}\\
&-\underbrace{d(t)\part^\alpha_x \part_v^\beta \left(\left(\frac{\delta_{ij}}{\jap{v}}-(d(t)+1)\frac{v_iv_j}{\jap{v}^2}\right)\a g\right)}_{\text{Term } 6}
\end{aligned}
\end{equation}
\end{comment}

As outlined in introduction, we use an iteration argument to prove existence of solutions to \eref{eq_for_g}. Thus as a first step we work towards proving a linearized version of \eref{eq_for_g}
\begin{lemma}\label{l.lin_eq}
Let $M_h>0$, $T\in (0,T_0]$, $g_{\ini}$ and $h$ be given nonnegative functions. Also let $g_{\ini}$ be such that $\norm{g_{\ini}}_{Y_{x,v}}<M_0$ and $h$ be such that $\norm{he^{d(t)\jap{v}}}_{Y_T}<M_h$. Then there exists a solution to the linearized problem 
\begin{equation} \label{e.lin_eq_landau}
\begin{split}
\partial_t\bar G+v_i\partial_{x_i}\bar G+\kappa\jap{v}\bar G&=\a [h]\partial^2_{v_iv_j}\bar G-\cm [h]\bar G-2d(t)\a [h]\frac{v_i}{\jap{v}}\partial_{v_j} \bar G
\\&\quad-d(t)\left(\frac{\delta_{ij}}{\jap{v}}-\left(d(t)+\frac{1}{\jap{v}}\right)\frac{v_iv_j}{\jap{v}^2}\right)\a [h]\bar G,
\end{split}
\end{equation}
with $\bar G(0,x,v)=g_{\ini}(x,v)$ and $\kappa$, a large enough constant. Moreover, $\bar G$ is non-negative and 
\begin{equation}\label{e.lin_exp_bound}
\norm{\bar G}^2_{E_T}\leq \norm{g_{\ini}}_{Y_{x,v}}^2\exp{(C(d_0,\gamma,\kappa,M_h)T)}.
\end{equation}
\end{lemma}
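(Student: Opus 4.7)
The plan is to construct $\bar G$ by a vanishing viscosity scheme on a bounded phase-space domain, as outlined in Section \ref{s.diff}. For parameters $\varepsilon > 0$ and $R > 0$, I first solve the regularized linear initial--boundary value problem obtained by adding the viscous term $-\varepsilon \Delta_{x,v}\bar G_{\varepsilon,R}$ to the left-hand side of \eqref{e.lin_eq_landau}, with $h$ fixed, on the ball $B_R \subset \R^3_x \times \R^3_v$ with Dirichlet boundary conditions and a truncated, mollified version of $g_{\ini}$ as initial datum. Since $h$ is fixed with finite $Y_T$-norm, the coefficients $\a[h]$, $\cm[h]$ are smooth and bounded on $B_R$, and standard parabolic Cauchy--Dirichlet theory produces a smooth solution $\bar G_{\varepsilon,R}$. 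Non-negativity follows from the parabolic maximum principle, using that $\a[h]$ is positive semidefinite (because $h \geq 0$) and that the indefinite lower-order terms are dominated once $\kappa$ is taken large.

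Next I would derive a priori estimates uniformly in $\varepsilon$ and $R$. Following Section \ref{ss.hie}, for each multi-index with $|\alpha|+|\beta| = m \leq 10$ I apply $\der$ to the equation, multiply by $\psi_m^2 \jap{v}^{2\omega_{\alpha,\beta}} \der \bar G_{\varepsilon,R}$ where $\{\psi_m\}$ is a nested hierarchy of smooth cut-offs supported in $B_R$ (in the spirit of \cite{HeSnTa17}), and integrate over $[0,t] \times B_R$. The $\kappa \jap{v}$ term produces the $L^2_t$-piece of $\norm{\bar G_{\varepsilon,R}}_{E_T}$ on the left-hand side, after discarding the sign-indefinite $d(t)\a[h]\jap{v}^{-1}\delta_{ij}$ contribution using pointwise non-negativity of $\bar{a}_{ii}[h]$ when no derivative falls on $e^{-d(t)\jap{v}}$, and $-\varepsilon \Delta_{x,v}$ contributes a favorable $\varepsilon$-term at one higher derivative. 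The remaining error terms are bounded using the coefficient estimates of Section \ref{s.coefficient_bounds} and the error estimates of Section \ref{s.errors}, together with the hierarchy gain $2\omega_{\alpha',\beta'} + 1 \leq 2\omega_{\alpha,\beta}$ whenever $|\alpha'|+|\beta'| = m+1$. Summing over $m$ and choosing $\kappa = \kappa(d_0,\gamma,M_h)$ large enough, Gr\"onwall yields
\begin{equation*}
\norm{\bar G_{\varepsilon,R}}_{E_T}^2 \leq \norm{g_{\ini}}_{Y_{x,v}}^2 \exp\bigl(C(d_0,\gamma,\kappa,M_h)T\bigr) + \mathcal{E}_{\mathrm{bdy}}(R,\varepsilon),
\end{equation*}
where $\mathcal{E}_{\mathrm{bdy}}$ collects the contributions from derivatives landing on the cut-offs $\psi_m$.

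The main obstacle, and the technical heart of the argument, is showing $\mathcal{E}_{\mathrm{bdy}}(R,\varepsilon) \to 0$ as $R \to \infty$, uniformly in small $\varepsilon$. A typical boundary term has schematic size $R^{\gamma-1}$, or $R^{\gamma-1-\eta}$ when $\gamma = 1$, as in \eqref{e.bdy}. These are controlled by induction on $m$: the $\psi_{m-1}$-weighted piece is absorbed into the already-controlled lower-order norm using both the strict hierarchy gain and the viscosity-generated bound at order $m+1$ from \eqref{e.vis}, while the $\psi_m$-piece carries a negative power of $R$. The necessity of $\delta_\gamma > 0$ when $\gamma = 1$ enters precisely at this step, guaranteeing that the exponent of $R$ is strictly negative in the borderline case.

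To conclude, I send $R \to \infty$ along a subsequence at fixed $\varepsilon > 0$ to obtain $\bar G_\varepsilon$, then send $\varepsilon \to 0$ to obtain $\bar G$. The uniform-in-$R$ and uniform-in-$\varepsilon$ bounds provide weak-$*$ convergence in the $E_T$-norm; combined with strong convergence of the streaming part, weak convergence of the second-order part, and control of $\partial_t \bar G$ in a weaker topology via the equation, this suffices to identify the limit as a distributional solution of \eqref{e.lin_eq_landau}. Weak lower semi-continuity yields \eqref{e.lin_exp_bound}, weak convergence preserves non-negativity, and $\bar G \in C^0([0,T); Y_{x,v})$ follows from the time-regularity supplied by the equation itself.
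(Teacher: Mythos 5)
Your proposal follows essentially the same route as the paper: vanishing viscosity on a bounded phase-space domain, weighted energy estimates against $\psi_m^2\jap{v}^{2\omega_{\alpha,\beta}}\der \bar G$, induction on the derivative count using the viscosity-generated higher-derivative term to absorb the boundary error, Gr\"onwall, and weak compactness. Two small points to tighten: a finite $Y_T$-norm for $h$ does not make $\a[h]$, $\cm[h]$ smooth (it only controls ten weighted $L^2$-derivatives), so, as in \lref{lin_visc_landau_exis}, one should mollify $h$ to $h_\eps = \zeta_\eps * h$ before invoking classical parabolic theory; and the boundary error $\mathcal{E}_{\mathrm{bdy}}(R,\eps)$ does not vanish uniformly in small $\eps$ (its absorption uses the $\eps$-weighted viscosity bound, hence carries a $1/\eps$), which is harmless for your iterated $R\to\infty$ then $\eps\to 0$ plan but is precisely why the paper instead sends $R=K\to\infty$ and $\eps=(\ln K)^{-1}\to 0$ along a single diagonal.
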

The proof will be concluded in \sref{put}.

As an intermediate step to proving \lref{lin_eq} we use vanishing viscosity method. We begin by noting existence for equation that is obtained by adding a small viscosity in \eref{eq_for_g}. To appeal to the standard parabolic theory we smoothen out the initial data near boundary of our domain and make sure that our linearization is also smooth.
In the following lemma and henceforth we use the following notation: for any $\eps>0$ and $R>3$, we define the mollifier $\zeta_\eps(x,v)=\eps^{-6}\zeta(x/\eps,y/\eps)$ for some non-negative, $C^\infty_c$ function $\zeta$ such that $\int \zeta \d v \d x=1$. We work on a ball $\Omega_R:=\{(x,v)\in \R^6:|x|^2+|v|^2<R^2\}$. Moreover, let $\chi_L$ be a smooth cut-off function on $\R^6$, supported on $\Omega_{L-1}$, equal to $1$ in $\Omega_{L-2}$, radially symmetric, monotone and such that for $|\alpha|+|\beta|=n$, $|\der \chi_R|<2^n$. 
\begin{lemma}\label{l.lin_visc_landau_exis}
Let $g_{\ini}$ and $h$ be given nonnegative functions with $T>0$. Also let $g_{\ini}$ be such that $\norm{g_{\ini}}_{Y_{x,v}}<\infty$ and $h$ be such that $\norm{he^{d(t)\jap{v}}}_{\H}<\infty$. For any $\eps>0$, let $h_\eps=\zeta_\eps*h$. Then for $R$ sufficiently large, there exists a unqiue solution $G=G_{h,R,\eps}$ to 
\begin{multline}\label{e.lin_eq_visc_landau}
\partial_tG+v_i\partial_{x_i}G+\kappa\jap{v}G=\eps\part^2_{x_i,x_i}G+\eps\part^2_{v_i,v_i}G+\a [h_\eps]\partial^2_{v_iv_j}G-\cm [h_\eps]G-2d(t)(\a [h_\eps]+\eps\delta_{ij})\frac{v_i}{\jap{v}}\partial_{v_j} G
\\-d(t)\left(\frac{\delta_{ij}}{\jap{v}}-\left(d(t)+\frac{1}{\jap{v}}\right)\frac{v_iv_j}{\jap{v}^2}\right)(\a [h_\eps]+\eps\delta_{ij})G
\end{multline}
on $[0,T]\times G$ such that \\
\begin{equation}
\begin{split}
&G(0,x,v)=\chi_R(x,v)(\zeta_\eps*g_{\ini})(x,v)\\
& G(t,x,v)=0 \text{ for all }(t,x,v)\in [0,\infty)\times \partial \Omega_R
\end{split}
\end{equation}

Moreover $G$ is nonnegative and $G\in C^\infty([0,T]\times \Omega_R).$
\end{lemma}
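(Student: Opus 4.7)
The plan is to read \eref{lin_eq_visc_landau} as a classical uniformly parabolic linear equation on the bounded smooth domain $\Omega_R\subset\R^6$, with smooth bounded coefficients, smooth compactly supported initial data, and zero Dirichlet boundary data. Existence, uniqueness, and smoothness of $G$ will then follow from standard linear parabolic theory (e.g. Ladyzhenskaya--Solonnikov--Ural'tseva or Lieberman), and non-negativity from the weak maximum principle.

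The first step is to verify uniform parabolicity on $[0,T]\times\overline{\Omega_R}$. After moving everything to the left-hand side, the principal part of \eref{lin_eq_visc_landau} is $-\eps\partial^2_{x_ix_i}G-(\eps\delta_{ij}+\a[h_\eps])\partial^2_{v_iv_j}G$, whose symbol is the block-diagonal matrix $\mathrm{diag}(\eps I_{3\times 3},\,\eps I_{3\times 3}+\a[h_\eps])$. By \eref{matrix_a} the matrix $a_{ij}(z)$ is pointwise positive semi-definite, and $h_\eps=\zeta_\eps*h\geq 0$ is a convolution of non-negative functions, so $\a[h_\eps]$ is positive semi-definite and the full symbol is bounded below by $\eps I_{6\times 6}$ uniformly. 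Next I would check that the remaining coefficients of \eref{lin_eq_visc_landau} are $C^\infty$ and bounded on $[0,T]\times\overline{\Omega_R}$ (with constants depending on $\eps$, $R$, $M_h$): since $\zeta_\eps$ is smooth and compactly supported, $h_\eps$ is $C^\infty$ in $(x,v)$ with all derivatives bounded, and inherits the exponential $v$-decay built into the hypothesis $\|he^{d(t)\jap{v}}\|_{\H}<\infty$ up to a width-$\eps$ shift. Hence the convolutions $\a[h_\eps]=a_{ij}*h_\eps$ and $\cm[h_\eps]=c*h_\eps$ are smooth in $(t,x,v)$, and bounded on $[0,T]\times\overline{\Omega_R}$ together with every derivative (all derivatives being transferred onto the mollified, rapidly decaying $h_\eps$). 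The explicit factors $v_i/\jap{v}$, $v_iv_j/\jap{v}^2$, $\kappa\jap{v}$ are also smooth and bounded on $\overline{\Omega_R}$, where $|v|\leq R$.

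Thanks to the cut-off $\chi_R$, the initial datum $\chi_R(\zeta_\eps*g_{\ini})$ belongs to $C^\infty_c(\Omega_R)$ and is supported in $\Omega_{R-1}$, so it vanishes to all orders in a neighborhood of $\partial\Omega_R$, trivially satisfying every compatibility condition with the zero Dirichlet data. Classical linear parabolic theory on a bounded smooth domain then produces a unique solution $G\in C^\infty([0,T]\times\overline{\Omega_R})$. For non-negativity, note that $\chi_R\geq 0$ and $\zeta_\eps*g_{\ini}\geq 0$, so the initial data is non-negative and the boundary data is zero; setting $\tilde G:=e^{-\lambda t}G$ with $\lambda$ larger than the $L^\infty([0,T]\times\overline{\Omega_R})$-norm of the (bounded) zeroth-order coefficient of \eref{lin_eq_visc_landau} turns that equation into one with a non-negative zeroth-order coefficient for $\tilde G$, and the weak maximum principle yields $\tilde G\geq 0$, hence $G\geq 0$. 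The main obstacle is really just the bookkeeping of checking that $\a[h_\eps]$ and $\cm[h_\eps]$ are genuinely smooth on $\overline{\Omega_R}$, even though $a_{ij}$ and $c$ are only H\"older at $z=0$ for generic $\gamma\in[0,1]$; this is handled by differentiating under the integral sign and transferring every derivative onto the mollified, rapidly decaying factor $h_\eps$.
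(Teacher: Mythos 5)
Your proposal is correct, and the overall strategy — view \eref{lin_eq_visc_landau} as a uniformly parabolic equation on $\Omega_R$ with smooth bounded coefficients and invoke standard linear parabolic theory for existence, uniqueness and smoothness, then the maximum principle for non-negativity — is the same as the paper's. Your remark about transferring derivatives onto the mollified, exponentially decaying factor $h_\eps$ so that $\a[h_\eps]$ and $\cm[h_\eps]$ are genuinely $C^\infty$ despite $a_{ij}$ being only H\"older at the origin is a worthwhile point that the paper leaves implicit.

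Where you diverge from the paper is the non-negativity step. You work directly with \eref{lin_eq_visc_landau} and normalize by $e^{-\lambda t}$ with $\lambda$ larger than the $L^\infty$ bound on the zeroth-order coefficient, which is available only because $\Omega_R$ is bounded. The paper instead undoes the exponential change of variables: it notes that $F = Ge^{-d(t)\jap{v}}$ satisfies the viscous Landau equation $\part_t F + v_i\part_{x_i}F = \eps\Delta_{x,v}F + \a[h_\eps]\part^2_{v_iv_j}F - \cm[h_\eps]F$, whose only zeroth-order coefficient is $-\cm[h_\eps]$, which is $\geq 0$ pointwise since $c \leq 0$ and $h_\eps \geq 0$. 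The maximum principle then applies with no $R$-dependent normalization, and $G = Fe^{d(t)\jap{v}} \geq 0$ follows. Both arguments are valid on the bounded domain at hand; your version is the more generic ``bounded zeroth-order coefficient'' reduction, while the paper's exploits the special structure of the Landau collision kernel to get a sign condition that is manifestly uniform in $R$ (and hence is the more natural one to keep in mind when one later wants to send $R \to \infty$).
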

\begin{proof}
Existence of such $G$ follows from standard parabolic theory. Non-negativity is a bit tricky, because in this form it's not clear if we can apply maximum principle.

To get over that issue, we conider the equation that $F=G e^{-d(t)\jap{v}}$ satisfies. We just get \eref{landau} back with an additional viscosity term. More precisely, we have
\begin{equation}\label{e.landau_with_visc}
\part_{t}F+v_i\part_{x_i} F=\eps \Delta_{x,v} F+\a[h_\eps]\part^2_{v_iv_j}F-\cm[h_\eps] F.
\end{equation}
Thanks to positivity of $h_\eps$, we have that $-\cm[h_\eps]$ is positive and thus by maximum principle we get that $F$ is non-negative as the boundary conditions are positive. Since $G=Fe^{d(t)\jap{v}}$, it is non-negative as well.
\end{proof}
Since we have $\norm{\der h_\eps}_{L^p}\leq \norm{\der h}_{L^p}$, we suppress its dependence on $\eps$ and just use the bounds for $h$.

Applying $\part_x^\alpha \part^\beta_v$ to \eref{lin_eq_visc_landau}, we get
\begin{equation} \label{e.lin_diff_visc_eq}
\begin{aligned}
&\partial_t \partial^\alpha_x\part_v^\beta G+v_i\partial_{x_i}\partial^\alpha_x\part_v^\beta G+\kappa\jap{v}\partial^\alpha_x\part_v^\beta G-\a[h]\part^2_{v_iv_j}\der G-\eps \part^2_{x_i,x_i}\der G-\eps \part^2_{v_i,v_i}\der G \\
&=\underbrace{[\partial_t+v_i\partial_{x_i},\partial_x^\alpha\partial_v^\beta ]G}_{\text{Term } 1}+\underbrace{\kappa(\part^\alpha_x \part_v^\beta (\jap{v}G)-\jap{v}\part^\alpha_x \part_v^\beta  G)}_{\text{Term } 2}\\
&\quad+\underbrace{\part^\alpha_x \part_v^\beta  (\a[h] \part^2_{v_iv_j}G)-\a[h]\part^2_{v_iv_j}\der G}_{\text{Term } 3}-\underbrace{\part^\alpha_x \part_v^\beta(\cm[h] G)}_{\text{Term } 4}-\underbrace{2(d(t))\part^\alpha_x \part_v^\beta \left((\a[h]+\eps\delta_{ij}) \frac{v_i}{\jap{v}}\part_{v_j}G\right)}_{\text{Term } 5}\\
&\quad-\underbrace{d(t)\part^\alpha_x \part_v^\beta \left(\left(\frac{\delta_{ij}}{\jap{v}}-\left(d(t)+\frac{1}{\jap{v}}\right)\frac{v_iv_j}{\jap{v}^2}\right)(\a[h]+\eps\delta_{ij}) G\right)}_{\text{Term } 6}.
\end{aligned}
\end{equation}

Now \lref{lin_visc_landau_exis} only provides zero boundary conditions for $G$ and not for higher derivatives. Thus, we need to multiply by a cut-off function in velocity that vanishes on $\part \Omega_R$. Unfortunately, we cannot just work with the same cut-off function for all higher derivatives of $G$. This is because when we perform integration by parts, we end up with derivatives of this cut-off function and thus we need to use a hierarchy of these functions, depending on how many derivatives we take of \eref{lin_eq_visc_landau}. We also note that the terms involving the derivatives of the cut-off function can be thought of as the boundary error terms and only show up because we work on $\Omega_R$ and not on $\R^6$.

Let $\psi$ be a smooth, radial, nonnegative cutoff function in the velocity variable such that it is identically $1$ when $|v|\leq 1$ and vanishes for $|v|\geq 11/10$. For $0<r<R$ then define $\psi^r(v)=\psi(v/r)$. Finally, for $|\alpha|+|\beta|=m$, we define $\psi_m(v)=\psi^{R/2^m}(v)$. Note that for $m=0$ this means that $\psi_0\equiv 1$ on $\Omega_R$ and for any $m>0$ we have that $\psi_m$ vanishes at $\part \Omega_R$.\\
With the above definitions and lemma we can now have the following energy estimate for the solution of \eref{lin_eq_visc_landau} and its higher derivatives.
\begin{lemma}\label{l.energy_est_et_up}
Let $G$ be a solution to \eref{lin_diff_visc_eq} then for $|\alpha|+|\beta|\leq 10$ and $T\in [0,T_0)$ we have the following energy estimate
\begin{multline}\label{e.main_energy_estimate}
\norm{\jap{v}^{\omega_{\alpha,\beta}}(\der G) \psi_m}_{L^2_vL^2_x}^2(T)+\kappa \norm{\jap{v}^{\omega_{\alpha,\beta}}\jap{v}^{\frac{1}{2}}(\der G) \psi_m}_{L^2([0,T];L^2_vL^2_x)}^2\\
+\eps\norm{\jap{v}^{\omega_{\alpha,\beta}}\part_{x}(\der G) \psi_m}_{L^2([0,T];L^2_vL^2_x)}^2+\eps\norm{\jap{v}^{\omega_{\alpha,\beta}}\part_{v}(\der G) \psi_m}_{L^2([0,T];L^2_vL^2_x)}^2,\\
\lesssim \norm{\jap{v}^{\omega_{\alpha,\beta}}(\der g_{\ini}) \psi_m}_{L^2_vL^2_x}^2+\int_0^T \int \int \jap{v}^{2\omega_{\alpha,\beta}}(\der G)J(t,x,v) \psi_m\d v \d x\d t \\
+\mathcal{A}^{\alpha,\beta}_1+\mathcal{A}^{\alpha,\beta}_2+\mathcal{A}^{\alpha,\beta}_3
+\mathcal{B}^{\alpha,\beta}_{1}+\mathcal{B}^{\alpha,\beta}_{2}+\mathcal{B}^{\alpha,\beta}_{3}+\mathcal{B}^{\alpha,\beta}_{4}.
\end{multline}
Here $J$ are Terms $1$-Terms $6$ and are a mix of bulk and boundary error terms and are estimated in \lref{J_term} and
\begin{equation}\label{e.a1}
\mathcal{A}_1^{\alpha,\beta}:=\max_{i,j}\norm{\jap{v}^{2\omega_{\alpha,\beta}}\psi_m^2(\part^2_{v_iv_j}\a[h])(\der G)^2}_{L^1([0,T];L^1_vL^1_x)},
\end{equation}
\begin{equation}\label{e.a2}
\mathcal{A}_2^{\alpha,\beta}:=\max_{i,j}\norm{\jap{v}^{2\omega_{\alpha,\beta}-2}\psi_m^2\a[h](\der G)^2}_{L^1([0,T];L^1_vL^1_x)},
\end{equation}
\begin{equation}\label{e.a3}
\mathcal{A}_3^{\alpha,\beta}:=\max_{i,j,l}\norm{\jap{v}^{2\omega_{\alpha,\beta}-1}\psi_m^2(\part_{v_l}\a[h])(\der G)^2}_{L^1([0,T];L^1_vL^1_x)},
\end{equation}
\begin{equation}\label{e.b1}
\mathcal{B}_1^{\alpha,\beta}:=\max_{i,j}\norm{\jap{v}^{2\omega_{\alpha,\beta}}\psi_m \part^2_{v_iv_j}\psi_m(\a[h]+\eps\delta_{ij})(\der G)^2}_{L^1([0,T];L^1_vL^1_x)},
\end{equation}
\begin{equation}\label{e.b2}
\mathcal{B}_2^{\alpha,\beta}:=\max_{i,j,k,l}\norm{\jap{v}^{2\omega_{\alpha,\beta}}\part_{v_l}\psi_m \part_{v_k}\psi_m(\a[h]+\eps\delta_{ij})(\der G)^2}_{L^1([0,T];L^1_vL^1_x)},
\end{equation}
\begin{equation}\label{e.b3}
\mathcal{B}_3^{\alpha,\beta}:=\max_{i,j,l}\norm{\jap{v}^{2\omega_{\alpha,\beta}-1}\psi_m \part_{v_l}\psi_m(\a[h]+\eps\delta_{ij})(\der G)^2}_{L^1([0,T];L^1_vL^1_x)},
\end{equation}
\begin{equation}\label{e.b4}
\mathcal{B}_4^{\alpha,\beta}:=\max_{i,j,l}\sum_{\substack{|\alpha'''|+|\alpha''|+|\alpha'|=2|\alpha|\\ |\beta'''|+|\beta''|+|\beta'|= 2|\beta|+1\\ |\alpha''|+|\beta''|=|\alpha|+|\beta|\\ |\beta'|+|\alpha'|=1}}\norm{\jap{v}^{2\omega_{\alpha,\beta}}\psi_m \part_{v_l}\psi_m\derv{'''}{'''}G(\derv{'}{'}\a[h])\derv{''}{''} G}_{L^1([0,T];L^1_vL^1_x)}.
\end{equation}
\end{lemma}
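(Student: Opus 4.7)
The plan is to test the differentiated equation \eref{lin_diff_visc_eq} against $2\psi_m^2\jap{v}^{2\omega_{\alpha,\beta}}\der G$, integrate over $[0,T]\times\Omega_R$, and process each left-hand side term via integration by parts while collecting the six commutator/Leibniz terms on the right into the integrand $J$ (to be handled separately in \lref{J_term}). The time derivative yields $[\norm{\jap{v}^{\omega_{\alpha,\beta}}\psi_m\der G}_{L^2_xL^2_v}^2]_0^T$, producing the first LHS norm and the initial-data term on the RHS. The transport term $v_i\partial_{x_i}\der G$ contributes $\int v_i\psi_m^2\jap{v}^{2\omega_{\alpha,\beta}}\partial_{x_i}(\der G)^2$, which integrates to zero in $x$ since the weight is $x$-independent (the $\partial\Omega_R$ contribution vanishes using $G|_{\partial\Omega_R}=0$ together with $\psi_m$ vanishing near $|v|=R$). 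The $\kappa\jap{v}\der G$ term gives the good $\kappa\norm{\jap{v}^{1/2}\jap{v}^{\omega_{\alpha,\beta}}\psi_m\der G}_{L^2_tL^2_{x,v}}^2$ LHS norm directly. The viscous $-\eps\partial^2_{x_i}\der G$ supplies its $\eps$ LHS norm after IBP in $x$; the analogous IBP in $v$ on $-\eps\partial^2_{v_i}\der G$ gives the $\partial_v$ LHS $\eps$ norm together with cross terms of the form $\partial_{v_i}(\psi_m^2\jap{v}^{2\omega_{\alpha,\beta}})\der G\,\partial_{v_i}\der G$ that, after a symmetrizing second IBP, account for the $\eps\delta_{ij}$ contributions inside $\mathcal{B}_1$--$\mathcal{B}_3$.

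The central piece of the argument is the analysis of the term $-\a[h]\partial^2_{v_iv_j}\der G$. A first IBP in $v_j$ yields
\begin{equation*}
2\int_0^T\!\!\int\int\psi_m^2\jap{v}^{2\omega_{\alpha,\beta}}\a[h]\,\partial_{v_i}\der G\,\partial_{v_j}\der G\;+\;\int_0^T\!\!\int\int\partial_{v_j}\!\bigl(\psi_m^2\jap{v}^{2\omega_{\alpha,\beta}}\a[h]\bigr)\partial_{v_i}\!\bigl((\der G)^2\bigr),
\end{equation*}
where the first integral is non-negative by the positive semi-definiteness of $\a[h]$ (a consequence of $h\geq 0$ together with the pointwise positivity of $a_{ij}$), so I simply drop it. A second IBP in $v_i$ on the remaining integral rewrites it as $-\int\partial^2_{v_iv_j}\!\bigl(\psi_m^2\jap{v}^{2\omega_{\alpha,\beta}}\a[h]\bigr)(\der G)^2$. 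Expanding the two $v$-derivatives by the product rule across the three factors $\psi_m^2$, $\jap{v}^{2\omega_{\alpha,\beta}}$, and $\a[h]$, and using $|\partial_v^k\jap{v}^{2\omega_{\alpha,\beta}}|\lesssim\jap{v}^{2\omega_{\alpha,\beta}-k}$, produces exactly $\mathcal{A}_1$ (both derivatives on $\a$), $\mathcal{A}_2$ (both on $\jap{v}^{2\omega_{\alpha,\beta}}$), $\mathcal{A}_3$ (one on each of these two), and $\mathcal{B}_1$--$\mathcal{B}_3$ whenever at least one derivative lands on $\psi_m^2$.

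Finally, after multiplying by $2\psi_m^2\jap{v}^{2\omega_{\alpha,\beta}}\der G$ and integrating, Terms $1$--$6$ on the RHS of \eref{lin_diff_visc_eq} are absorbed into $\int_0^T\int\int\jap{v}^{2\omega_{\alpha,\beta}}\psi_m^2(\der G)J$ save for one subtlety inside Term $3$. The Leibniz expansion of $\der(\a[h]\partial^2_{v_iv_j}G)-\a[h]\partial^2_{v_iv_j}\der G$ contains summands $c_{\alpha',\beta'}(\derv{'}{'}\a[h])(\derv{''}{''}\partial^2_{v_iv_j}G)$ with $|\alpha'|+|\beta'|=1$, and for these summands the $G$-factor carries $|\alpha|+|\beta|+1$ total derivatives, one above the top energy order. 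I handle them by IBP once in $v_j$, transferring a derivative from the high-order $G$ onto the product $\psi_m^2\jap{v}^{2\omega_{\alpha,\beta}}(\der G)(\derv{'}{'}\a[h])$; the summand in which $\partial_{v_j}$ falls on $\psi_m^2$ is precisely $\mathcal{B}_4$, and every other summand (derivative on $\jap{v}^{2\omega_{\alpha,\beta}}$, on $\derv{'}{'}\a[h]$, or on $\der G$ itself) has the same derivative count as a generic Term $3$ contribution and is folded back into $J$. The main technical obstacle is precisely this bookkeeping inside Term $3$: isolating the $|\alpha'|+|\beta'|=1$ summands, carrying out the compensating IBP, and correctly attributing each resulting piece to either $J$ or $\mathcal{B}_4$. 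Once the accounting is complete, \eref{main_energy_estimate} follows by summing the pieces.
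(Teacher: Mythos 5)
Your overall strategy matches the paper's: test \eref{lin_diff_visc_eq} against $\psi_m^2\jap{v}^{2\omega_{\alpha,\beta}}\der G$, handle the transport term and the $\kappa$ term trivially, integrate by parts the two viscous terms and the $\a[h]\partial^2_{v_iv_j}\der G$ term (dropping the favourable $\a[h]\,\partial_{v_i}\der G\,\partial_{v_j}\der G$ piece thanks to positive semi-definiteness), and leave Terms $1$--$6$ as the $J$-integrand. However, there is a bookkeeping error in how you attribute $\mathcal{B}_4$.

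When you expand $\partial^2_{v_iv_j}\bigl(\psi_m^2\jap{v}^{2\omega_{\alpha,\beta}}\a[h]\bigr)$ over the three factors, the five possible placements of the two $v$-derivatives must each be matched to a term in the lemma. Your claim that \emph{every} placement with at least one derivative on $\psi_m^2$ is absorbed into $\mathcal{B}_1$--$\mathcal{B}_3$ is wrong: $\mathcal{B}_1$, $\mathcal{B}_2$, $\mathcal{B}_3$ all carry $\a[h]$ \emph{undifferentiated}, so they cover only the configurations (both on $\psi_m^2$) and (one on $\psi_m^2$, one on $\jap{v}^{2\omega_{\alpha,\beta}}$). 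The configuration (one on $\psi_m^2$, one on $\a[h]$) produces integrands of the form $\jap{v}^{2\omega_{\alpha,\beta}}\psi_m\,\partial_{v_l}\psi_m\,(\partial_{v_k}\a[h])(\der G)^2$, which are exactly the $A_4$ and $A_{12}$ pieces in the paper and are precisely what $\mathcal{B}_4$ as defined in \eref{b4} is built to absorb (take $\alpha'''=\alpha''=\alpha$, $\beta'''=\beta''=\beta$, $|\alpha'|+|\beta'|=1$). So $\mathcal{B}_4$ in the right-hand side of \eref{main_energy_estimate} does not arise from Term~$3$ at all; it arises from the $\a$-term IBP you were already carrying out, and you simply misplaced it.

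A second, related issue: the Term-$3$ manipulation you perform — isolating the $|\alpha'|+|\beta'|=1$ summands and integrating by parts once in $v_j$ — is precisely what the paper does in \lref{J_term}, not here. The statement you are proving keeps Terms $1$--$6$ packaged as $J$, untouched, on the right-hand side of \eref{main_energy_estimate}. If you IBP inside Term~$3$ at this stage, your ``$J$''-integral is no longer the one appearing in the lemma statement, so strictly speaking you would be proving a different inequality. The fix is small: delete the final paragraph of your proposal (the Term-$3$ IBP), and instead recognize the (one on $\psi_m^2$, one on $\a[h]$) case in the $\a$-IBP expansion as the source of $\mathcal{B}_4$.
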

\begin{proof}
We first multiply \eref{lin_diff_visc_eq} by $\jap{v}^{2\omega_{\alpha,\beta}}(\der G) \psi_m^2$ and integrate in $[0,T]\times \R^3\times \R^3$ (we suppress the domain of integration henceforth). Although, $G$ is only defined on $\Omega_R$, we multiply by the cut-off function $\psi_m$, thus we can extend domain of integration to all of $\R^6$ by assuming the integrand is zero outside $\Omega_R$.

Then integrating by parts in space and time we get,
\begin{align}
&\frac{1}{2}\int \int \jap{v}^{2\omega_{\alpha,\beta}}\psi_m^2(\der G)^2(T,x,v)\d v\d x-\frac{1}{2}\int \int \jap{v}^{2\omega_{\alpha,\beta}}\psi_m^2(\der G)^2(0,x,v) \d v \d x\\
&+\int_0^T \int \int \jap{v}^{2\omega_{\alpha,\beta}}\psi_m^2\kappa \jap{v} (\der G)^2(t,x,v)\d v \d x \d t\\\label{e.a_matrix_term}
 &-\int_0^T \int \int \jap{v}^{2\omega_{\alpha,\beta}}\psi_m^2\der G\a[h]\part^2_{v_i v_j}\der G\d v\d x\d t\\\label{e.lap_x_term}
&-\eps \int_0^T \int \int \jap{v}^{2\omega_{\alpha,\beta}}\psi_m^2 (\der G) \part^2_{x_i,x_i} \der G \d v \d x \d t\\\label{e.lap_v_term}
&-\eps \int_0^T \int \int \jap{v}^{2\omega_{\alpha,\beta}}\psi_m^2 (\der G) \part^2_{v_i,v_i} \der G \d v \d x \d t\\\label{e.H_term}
&=\int_0^T \int \int \jap{v}^{2\omega_{\alpha,\beta}}\psi^2_m(\der G)J(t,x,v)\d v \d x\d t.
\end{align}
Integrating by parts twice in $v$ for \eref{a_matrix_term} we get (for brevity we drop the integration signs)
\begin{equation*}
\begin{split}
\eref{a_matrix_term}&\equiv \jap{v}^{2\omega_{\alpha,\beta}}\psi_m^2 \part_{v_i}\der G(\a[h])\part_{v_j}\der G+\frac{1}{2}\jap{v}^{2\omega_{\alpha,\beta}}\psi_m^2 \part_{v_i}\a[h]\part_{v_j}((\der G)^2)\\
&\quad+\omega_{\alpha,\beta}v_i \jap{v}^{2\omega_{\alpha,\beta}-2}\psi_m^2\a[h]\part_{v_j}((\der G)^2)+\jap{v}^{2\omega_{\alpha,\beta}}\psi_m\part_{v_i}\psi_m\a[h]\part_{v_j}((\der G)^2)\\
&\equiv \underbrace{\jap{v}^{2\omega_{\alpha,\beta}}\psi_m^2 \part_{v_i}\der G(\a[h])\part_{v_j}\der G}_{A_1}- \underbrace{\frac{1}{2}\jap{v}^{2\omega_{\alpha,\beta}}\psi_m^2 \part_{v_iv_j}^2\a[h](\der G)^2}_{A_2}\\
&\quad- \underbrace{\omega_{\alpha,\beta}v_j\jap{v}^{2\omega_{\alpha,\beta}-2}\psi_m^2 \part_{v_i}\a[h](\der G)^2}_{A_3}- \underbrace{\jap{v}^{2\omega_{\alpha,\beta}}\psi_m\part_{v_j}\psi_m \part_{v_i}\a[h](\der G)^2}_{A_4}\\
&\quad - \underbrace{\omega_{\alpha,\beta}\delta_{i,j} \jap{v}^{2\omega_{\alpha,\beta}-2}\psi_m^2\a[h](\der G)^2}_{A_5}- \underbrace{2\omega_{\alpha,\beta}(\omega_{\alpha,\beta}-1)v_i v_j\jap{v}^{2\omega_{\alpha,\beta}-4}\psi_m^2\a[h](\der G)^2}_{A_6}\\
&\quad - \underbrace{\omega_{\alpha,\beta}v_i \jap{v}^{2\omega_{\alpha,\beta}-2}\psi_m\part_{v_j}\psi_m (\a[h])(\der G)^2}_{A_7}- \underbrace{\omega_{\alpha,\beta}v_i \jap{v}^{2\omega_{\alpha,\beta}-2}\psi_m^2\part_{v_j}\a[h](\der G)^2}_{A_8}\\
&\quad- \underbrace{2\omega_{\alpha,\beta}v_j\jap{v}^{2\omega_{\alpha,\beta}-2}\psi_m\part_{v_i}\psi_m(\a[h])(\der G)^2}_{A_9}- \underbrace{\jap{v}^{2\omega_{\alpha,\beta}}\part_{v_j}\psi_m\part_{v_i}\psi_m(\a[h])(\der G)^2}_{A_{10}}\\
&\quad- \underbrace{\jap{v}^{2\omega_{\alpha,\beta}}\psi_m\part_{v_jv_i}^2\psi_m(\a[h])(\der G)^2}_{A_{11}}- \underbrace{\jap{v}^{2\omega_{\alpha,\beta}}\psi_m\part_{v_i}\psi_m\part_{v_j}\a[h](\der G)^2}_{A_{12}}.
\end{split}
\end{equation*}
For now, we just make the following observations,
\begin{itemize} 
\item $A_1$ is non-negative and thus can be dropped from our estimate. This is true since $h\geq 0$.\\
\item $A_2$ is bounded by $\mathcal{A}_1^{\alpha,\beta}$.\\
\item $A_3$ and $A_8$ can be bounded by $\mathcal{A}_3^{\alpha,\beta}$ from \lref{energy_est_et_up}.\\
\item $A_5$ and $A_6$ can be bounded by $\mathcal{A}_2^{\alpha,\beta}$ from \lref{energy_est_et_up}.\\
\item $A_4$ and $A_{12}$ can be bounded by $\mathcal{B}_4^{\alpha,\beta}$ from \lref{energy_est_et_up}.\\
\item $A_7$ and $A_9$ can be bounded by $\mathcal{B}_3^{\alpha,\beta}$ from \lref{energy_est_et_up}.\\
\item $A_{10}$ is bounded by $\mathcal{B}_2^{\alpha,\beta}$ from \lref{energy_est_et_up}.\\
\item Finally $A_{11}$ is bounded by $\mathcal{B}_1^{\alpha,\beta}$.
\end{itemize}

All the terms other than $A_1$ will be treated as either ``bulk" error terms (when there are no derivatives falling on $\psi_m$) or as ``boundary" error terms  (when there are some derivatives falling on $\psi_m$).

Continuing integrating by parts we perform the same in $x$ for \eref{lap_x_term} to get $$\eref{lap_x_term}=\eps\int_0^T \int \int \jap{v}^{2\omega_{\alpha,\beta}}\psi^2_m(\part_{x_i}\der G)^2\d v \d x\d t.$$
Integrating by parts in $v$ for \eref{lap_v_term} we get 
\begin{equation*}
\begin{split}
\eref{lap_v_term}&\equiv \eps [\jap{v}^{2\omega_{\alpha,\beta}}\psi^2_m (\part_{v_i}\der G)^2+ \jap{v}^{2\omega_{\alpha,\beta}}\part_{v_i}((\der G)^2) \psi_m \part_{v_i}\psi_m\\
&\quad+\omega_{\alpha,\beta}v_i\jap{v}^{2\omega_{\alpha,\beta}-2}\part_{v_i}((\der G)^2) \psi_m^2]\\
&\equiv \eps[ \underbrace{\jap{v}^{2\omega_{\alpha,\beta}}\psi^2_m (\part_{v_i}\der G)^2}_{C_1}-\underbrace{2\omega_{\alpha,\beta}v_i\jap{v}^{2\omega_{\alpha,\beta}-2}(\der G)^2 \psi_m\part_{v_i}\psi_m}_{C_2}\\
&\quad-\underbrace{\jap{v}^{2\omega_{\alpha,\beta}}(\der G)^2(\part_{v_i} \psi_m)^2}_{C_3}-\underbrace{\jap{v}^{2\omega_{\alpha,\beta}}(\der G)^2\psi_m\part^2_{v_i} \psi_m}_{C_4}\\
&\quad-\underbrace{\omega_{\alpha,\beta}\jap{v}^{2\omega_{\alpha,\beta}-2}(\der G)^2 \psi_m^2}_{C_5}-\underbrace{2\omega_{\alpha,\beta}(\omega_{\alpha,\beta}-1)v_i^2\jap{v}^{2\omega_{\alpha,\beta}-4}(\der G)^2\psi_m^2}_{C_6}\\
&\quad-\underbrace{\omega_{\alpha,\beta}v_i\jap{v}^{2\omega_{\alpha,\beta}-2}(\der G)^2 \psi_m\part_{v_i}\psi_m}_{C_7}].
\end{split}
\end{equation*}

We again bound the various $C_i$ terms as follows,
\begin{itemize}
\item $C_1$ is positive and will be useful for closing the estimates and hence incorporated on the LHS.\\
\item $C_2$ and $C_7$ can be bounded by $\mathcal{B}_{3}^{\alpha,\beta}$\\
\item $C_3$ can be bounded by $\mathcal{B}_2^{\alpha,\beta}$\\
\item $C_4$ can be bounded by $\mathcal{B}_1^{\alpha,\beta}$\\
\item $C_5$ and $C_6$ can be absorbed on the LHS by choosing $\eps$ small.
\end{itemize}
\end{proof} 
\begin{remark}\label{r.psi_0=1}
For $|\alpha|+|\beta|=0$, we have by our convention $\psi_0=1$. Thus the terms of the form $\mathcal{B}_i^{\alpha,\beta}$ are not present. This fact will be crucial later in showing that the boundary error terms degenerate in the limit $R \to \infty$ via an induction.
\end{remark}
Now we bound $\int_0^T \int \int \jap{v}^{2\omega_{\alpha,\beta}}(\der G)J(t,x,v) \psi_m^2\d v \d x\d t $ with various error terms.

\begin{lemma}\label{l.J_term}
For $|\alpha|+|\beta|\leq 10$, we have that\\
\begin{align*}
 \int_0^T \int \int \jap{v}^{2\omega_{\alpha,\beta}}(\der G)J(t,x,v)\psi_m\d v \d x\d t &\lesssim T_1^{\alpha,\beta}+T_2^{\alpha,\beta}+T^{\alpha,\beta}_{3,1}+T^{\alpha,\beta}_{3,2}+T^{\alpha,\beta}_{3,3}
+T^{\alpha,\beta}_{4}\\
&\quad+T^{\alpha,\beta}_{5,1}+T^{\alpha,\beta}_{5,2}+T^{\alpha,\beta}_{6,1}+T^{\alpha,\beta}_{6,2}+\mathcal{B}_4^{\alpha,\beta}+\mathcal{B}_5^{\alpha,\beta},
\end{align*}
with $\mathcal{B}_4^{\alpha,\beta}$ defined in \eref{b4},\\
\begin{equation}\label{e.b5}
\mathcal{B}_5^{\alpha,\beta}:=\max_{i,j,l}\norm{\jap{v}^{2\omega_{\alpha,\beta}}\psi_m \part_{v_l}\psi_m\left((\a[h]+\eps\delta_{ij}) \frac{v_i}{\jap{v}}\right)(\der G)^2}_{L^1([0,T];L^1_vL^1_x)}.
\end{equation}
Moreover,
\begin{equation}\label{e.T1}
T^{\alpha,\beta}_1:=\sum_{\substack{|\alpha'|\leq |\alpha|+1\\ |\beta'|\leq |\beta|-1}}\norm{\jap{v}^{2\omega_{\alpha,\beta}}\psi_m^2|\der G|\cdot|\derv{'}{'}G|}_{L^1([0,T];L^1_xL^1_v)},
\end{equation}
\begin{equation}\label{e.T2}
T_2^{\alpha,\beta}:=\sum \limits_{|\beta'|\leq |\beta|-1}\norm{\kappa\jap{v}^{2\omega_{\alpha,\beta}}\psi_m^2|\der G|\cdot|\derv{}{'} G|}_{L^1([0,T];L^1_xL^1_v)}
\end{equation}
\begin{equation}\label{e.T3_1}
T_{3,1}^{\alpha,\beta}:=\max_{i,j} \sum_{\substack{|\alpha'|+|\alpha''|+|\alpha'''|\leq 2|\alpha|\\ |\beta'|+|\beta''|+|\beta'''|\leq 2|\beta|+2\\ |\alpha'''|+|\beta'''|=|\alpha|+|\beta|\\ 2\leq |\alpha'|+|\beta'|\leq \min\{|\alpha|+|\beta|,8\}\\ |\alpha''|+|\beta''|\leq |\alpha|+|\beta|}}\norm{\jap{v}^{2\omega_{\alpha,\beta}}\psi_m^2|\derv{'''}{'''} G||\derv{'}{'}\a[h]||\derv{''}{''}G|}_{L^1([0,T];L^1_xL^1_v)},
\end{equation}
\begin{equation}\label{e.T3_2}
T_{3,2}^{\alpha,\beta}:=\max_{i,j} \sum_{\substack{|\alpha'|+|\alpha''|= |\alpha|\\ |\beta'|+|\beta''|=|\beta|\\ |\alpha'|+|\beta'|\geq 9}}\norm{\jap{v}^{2\omega_{\alpha,\beta}}\psi_m^2|\der G||\derv{'}{'}\a[h]||\part^2_{v_iv_j}\derv{''}{''}G|}_{L^1([0,T];L^1_xL^1_v)},
\end{equation}
\begin{equation}\label{e.T3_3}
T_{3,3}^{\alpha,\beta}:=\max_{i,j,l} \sum_{\substack{|\alpha'|+|\alpha''|= |\alpha|\\ |\beta'|+|\beta''|=\beta|\\  |\alpha'|+|\beta'|=1}}\norm{\jap{v}^{2\omega_{\alpha,\beta}-1}\psi_m^2|\der G||\derv{'}{'}\a[h]|\part_{v_l}\derv{''}{''}G|}_{L^1([0,T];L^1_xL^1_v)},
\end{equation}
\begin{equation}\label{e.T4}
T_4^{\alpha,\beta}:=\sum_{\substack{|\alpha'|+|\alpha''|\leq |\alpha|\\ |\beta'|+|\beta''|\leq |\beta|}} \norm{\jap{v}^{2\omega_{\alpha,\beta}}\psi_m^2|\der G||\derv{'}{'}\cm[h]||\derv{''}{''}G|}_{L^1([0,T];L^1_xL^1_v)},
\end{equation}
\begin{equation}\label{e.T5_1}
T_{5,1}^{\alpha,\beta}:=\sum_{\substack{|\alpha'|+|\alpha''|=|\alpha|\\ |\beta'|+|\beta''|=|\beta|+1\\ 1\leq |\alpha'|+|\beta'|\leq |\alpha|+|\beta|}}\norm{\jap{v}^{2\omega_{\alpha,\beta}}\psi_m^2|\der G|\left|\derv{'}{'}\left((\a[h]+\eps\delta_{ij}) \frac{v_i}{\jap{v}}\right)\right||\derv{''}{''}G|}_{L^1([0,T];L^1_xL^1_v)},
\end{equation}
\begin{equation}\label{e.T5_2}
T_{5,2}^{\alpha,\beta}:=\max_{j}\norm{\jap{v}^{2\omega_{\alpha,\beta}-1}\psi_m^2|\der G|\left|\left((\a[h]+\eps\delta_{ij})\frac{v_i}{\jap{v}}\right)\right||\der G|}_{L^1([0,T];L^1_xL^1_v)},
\end{equation}
\begin{equation}\label{e.T6_1}
T_{6,1}^{\alpha,\beta}:=\sum_{\substack{|\alpha'|+|\alpha''|\leq |\alpha|\\ |\beta'|+|\beta''|\leq |\beta|\\ |\alpha'|+|\beta'|\geq 1}} \norm{\jap{v}^{2\omega_{\alpha,\beta}}\psi_m^2|\der G|\left|\derv{'}{'}\left(\frac{\bar{a}[h]_{ii}+\eps}{\jap{v}}\right)\right||\derv{''}{''} G|}_{L^1([0,T];L^1_xL^1_v)},
\end{equation}
\begin{equation}\label{e.T6_2}
T_{6,2}^{\alpha,\beta}:=\sum_{\substack{|\alpha'|+|\alpha''|\leq |\alpha|\\ |\beta'|+|\beta''|\leq |\beta|}} \norm{\jap{v}^{2\omega_{\alpha,\beta}}\psi_m^2|\der G|\left|\derv{'}{'}\left(\frac{(\a[h]+\eps\delta_{ij})v_iv_j}{\jap{v}^2} \right)\right||\derv{''}{''} G|}_{L^1([0,T];L^1_xL^1_v)}.
\end{equation}
\end{lemma}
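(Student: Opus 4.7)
The plan is to treat each of the six pieces of $J$ separately, in each case expanding via Leibniz's rule and, where the derivative count on $G$ becomes too large, integrating by parts in $v$ to redistribute derivatives. The three algebraically cleanest pieces are Terms $1$, $2$, and $4$. Term $1$ is the commutator $[\part_t+v_i\part_{x_i},\der]G$; a direct calculation shows it equals a sum of terms of the form $\derv{'}{'}G$ with $|\alpha'|\leq|\alpha|+1$ and $|\beta'|\leq|\beta|-1$, so multiplying by $\jap{v}^{2\omega_{\alpha,\beta}}\psi_m^2\der G$ and integrating yields $T_1^{\alpha,\beta}$. Term $2$ expands by Leibniz so that some velocity derivatives fall on the smooth bounded function $\jap{v}$, producing $T_2^{\alpha,\beta}$. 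Term $4$ is a plain Leibniz expansion of $\der(\cm[h]G)$, giving $T_4^{\alpha,\beta}$. None of these requires integration by parts.

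Term $3$ is where the main difficulty lies. After applying Leibniz we obtain
\begin{equation*}
\der(\a[h]\part^2_{v_iv_j}G)-\a[h]\part^2_{v_iv_j}\der G=\sum_{\substack{|\alpha'|+|\alpha''|=|\alpha|\\|\beta'|+|\beta''|=|\beta|\\(\alpha',\beta')\neq(0,0)}}\binom{\alpha}{\alpha'}\binom{\beta}{\beta'}(\derv{'}{'}\a[h])(\part^2_{v_iv_j}\derv{''}{''}G),
\end{equation*}
and the second factor carries as many as $|\alpha|+|\beta|+1$ derivatives on $G$, one more than the energy space absorbs in $L^2_xL^2_v$, and also two extra velocity derivatives incompatible with the hierarchy $\omega_{\alpha,\beta}$. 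I would split the sum according to the size of $|\alpha'|+|\beta'|$. When $2\leq|\alpha'|+|\beta'|\leq\min\{|\alpha|+|\beta|,8\}$, I perform two integrations by parts (in $v_j$ then $v_i$) against the outer $\der G$, so that the two second-order velocity derivatives are redistributed onto the remaining factors; the resulting bulk contributions take the form $\jap{v}^{2\omega_{\alpha,\beta}}\psi_m^2|\derv{'''}{'''}G||\derv{'}{'}\a[h]||\derv{''}{''}G|$ with exactly the index bookkeeping encoded in $T_{3,1}^{\alpha,\beta}$, while every velocity derivative that lands on $\psi_m$ produces a boundary contribution to $\mathcal{B}_4^{\alpha,\beta}$. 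When $|\alpha'|+|\beta'|\geq 9$, the $G$-factor has at most three derivatives, so Sobolev embedding in $x$ estimates it in $L^\infty_xL^2_v$ while $\a[h]$ is held in $L^2_x$: no integration by parts is performed and direct estimation gives $T_{3,2}^{\alpha,\beta}$. When $|\alpha'|+|\beta'|=1$, a single integration by parts suffices and produces $T_{3,3}^{\alpha,\beta}$ (with its boundary term again absorbed into $\mathcal{B}_4^{\alpha,\beta}$). The upper cut-off $8$ in $T_{3,1}^{\alpha,\beta}$ is chosen precisely so that the two extra velocity derivatives produced by the double IBP and redistributed onto $\a[h]$ still keep the total order of derivatives on $\a[h]$ at most $10$.

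Term $5$ is structurally similar but simpler. Applying Leibniz to $\der\bigl((\a[h]+\eps\delta_{ij})(v_i/\jap{v})\,\part_{v_j}G\bigr)$, all contributions with $|\alpha'|+|\beta'|\geq 1$ on the coefficient $(\a+\eps\delta_{ij})v_i/\jap{v}$ are bounded directly and match $T_{5,1}^{\alpha,\beta}$. The remaining contribution places every derivative on $\part_{v_j}G$, producing $\part_{v_j}\der G$ with one derivative too many; I integrate by parts in $v_j$ against the outer $\der G$ via $\der G\,\part_{v_j}\der G=\frac{1}{2}\part_{v_j}(\der G)^2$, so the derivative falls either onto the weight $\jap{v}^{2\omega_{\alpha,\beta}}$ (giving the $v_j/\jap{v}^2$ gain visible in $T_{5,2}^{\alpha,\beta}$), onto $\psi_m^2$ (giving $\mathcal{B}_5^{\alpha,\beta}$), or onto the coefficient (absorbed into the $|\alpha'|+|\beta'|=1$ case of $T_{5,1}^{\alpha,\beta}$).

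Term $6$ has no velocity derivative of $G$ inside, so a plain Leibniz expansion of $\der\bigl((\delta_{ij}/\jap{v}-(d(t)+1/\jap{v})v_iv_j/\jap{v}^2)(\a[h]+\eps\delta_{ij})G\bigr)$ suffices. The $\delta_{ij}/\jap{v}$ part contracts to $(\bar{a}[h]_{ii}+\eps)/\jap{v}$; its $|\alpha'|+|\beta'|=0$ piece has a good sign (since $h\geq0$ makes $\bar{a}[h]_{ii}\geq 0$) and can be discarded, while the remaining pieces form $T_{6,1}^{\alpha,\beta}$. The $v_iv_j/\jap{v}^2$ part, with the prefactor $(d(t)+1/\jap{v})\leq d_0+1$ absorbed into the implicit constant, gives $T_{6,2}^{\alpha,\beta}$. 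Summing the contributions by the triangle inequality completes the proof. The hardest step is the bookkeeping for Term $3$: verifying that the index constraints $|\alpha'|+|\alpha''|+|\alpha'''|\leq 2|\alpha|$ and $|\beta'|+|\beta''|+|\beta'''|\leq 2|\beta|+2$ in $T_{3,1}^{\alpha,\beta}$ are exactly what two successive integrations by parts in $v_i$ and $v_j$ can produce, and confirming that the splitting at $|\alpha'|+|\beta'|=9$ is consistent with the hierarchy of weights $\omega_{\alpha,\beta}$ so that each piece can later be absorbed by the left-hand side of \eref{main_energy_estimate}.
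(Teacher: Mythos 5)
Your treatment of Terms 1, 2, 4, 5, and 6 matches the paper's proof, including the observation that the undifferentiated $\bar{a}_{ii}[h]/\jap{v}$ piece of Term 6 has a good sign and the integration-by-parts bookkeeping for Term 5 (producing $T_{5,1}^{\alpha,\beta}$, $T_{5,2}^{\alpha,\beta}$, and $\mathcal{B}_5^{\alpha,\beta}$). However, the integration-by-parts logic for Term 3 is reversed. The paper integrates by parts \emph{twice} only when exactly one derivative hits $\a[h]$, i.e.\ $|\alpha'|+|\beta'|=1$: this is the single configuration in which $\part^2_{v_iv_j}\derv{''}{''}G$ carries $|\alpha|+|\beta|+1$ derivatives and cannot be placed in $L^2_xL^2_v$. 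One IBP in $v_i$ is not enough there, since it leaves $\part_{v_i}\der G$ with $|\alpha|+|\beta|+1$ derivatives; the second IBP (in $\part_{x_l}$ or $\part_{v_l}$) followed by symmetrization is what produces the factor $\tfrac12$ in \eref{x_der_a_1}/\eref{v_der_a_1} and pushes the term into $T_{3,1}^{\alpha,\beta}$. The $T_{3,3}^{\alpha,\beta}$ term is just the weight-hit piece of the first IBP; it does not exhaust the remaining mass. By contrast, when $2\leq |\alpha'|+|\beta'|\leq 8$ the factor $\part^2_{v_iv_j}\derv{''}{''}G$ already carries at most $|\alpha|+|\beta|$ derivatives, the constraint $|\beta'|+|\beta''|+|\beta'''|\leq 2|\beta|+2$ built into $T_{3,1}^{\alpha,\beta}$ explicitly budgets for the two extra velocity derivatives, and no integration by parts is performed at all.

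Your proposal of a double IBP in the range $2\leq |\alpha'|+|\beta'|\leq 8$ is not merely unnecessary, it breaks the accounting: the resulting cut-off boundary terms would carry at least two derivatives on $\a[h]$, which the definition of $\mathcal{B}_4^{\alpha,\beta}$ excludes (it fixes $|\alpha'|+|\beta'|=1$). Moreover, a single IBP puts $|\alpha|+|\beta|+1$ derivatives on the other $G$-factor, which is exactly the problem you are trying to fix. Your explanation of the cut-off at $8$ is also off: it is not about keeping the derivative count on $\a[h]$ at or below $10$, but rather about which factor to estimate in $L^\infty_x$ via Sobolev embedding. When at most $8$ derivatives hit $\a[h]$ one can Sobolev-embed $\norm{\h}_{Y^{8}_v}$ into $L^\infty_x$ at the cost of two spatial derivatives (giving $T_{3,1}^{\alpha,\beta}$ and \lref{special_holder_der_leq_8}); when $9$ or more derivatives hit $\a[h]$, it must be held in $L^2_x$, forcing the complementary $G$-factor into $L^\infty_x$ (giving $T_{3,2}^{\alpha,\beta}$ and \lref{special_holder_der_geq_9}).
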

%\begin{remark}\label{r.weights}
%\begin{enumerate}
%\item For $T_3^{\alpha,\beta}$, note that $2\omega_{\alpha,\beta}=40-3|\alpha|-|\beta|$.\\
%$$\omega_{\alpha''',\beta'''}+\omega_{\alpha'',\beta''}=40-\frac{3}{2}(|\alpha'''|+|\alpha''|)-\frac{1}{2}(|\beta'''|+|\beta''|)$$ Which by our conditions is greater than\\
%$$40-3|\alpha|-|\beta|+\frac{3|\alpha'|+|\beta'|}{2}-1$$\\
%\item For $T_4^{\alpha,\beta}$, although we don't guarantee two derivatives hitting $\a[h]$ but we have one less $\jap{v}$ weight to handle. Again $$\omega_{\alpha''',\beta'''}+\omega_{\alpha'',\beta''}\geq \omega_{\alpha,\beta}$$
%\end{enumerate}
%\end{remark}
\begin{proof}
We proceed by bounding the energy of the various terms appearing on the RHS of \eref{lin_diff_visc_eq}.

\emph{Term 1:} We have that $$[\part_t+v_i\part_{x_i},\der]G=\sum_{\substack{|\beta'|+|\beta''|=|\beta|\\ |\beta'|=1}}\part^{\beta'}_{x}\part^{\beta''}_{v}\part^\alpha_x G$$\\
Thus it can be bounded by $T^{\alpha,\beta}_1$.

\emph{Term 2:}  For Term $2$, the commutator term arises from $\part_v$ acting on $\jap{v}$. Thus, we have
$$|\text{Term }2|\lesssim \sum \limits_{|\beta'|\leq |\beta|-1} \derv{}{'} G$$
This contribution is majorized by $T_2^{\alpha,\beta}$.

\emph{Term 3:} When only one derivative hits $\a[h]$, we need to perform integration by parts. We split this into two cases

\emph{Case 1}: When $(\alpha',\beta')=(1,0)$. That is $\part_x^{\alpha'}=\part_{x_l}$.\\
We perform integration by parts twice first with $\part_{v_i}$ then followed by $\part_{x_l}$.
\begin{align}
\jap{v}^{2\omega_{\alpha,\beta}}\psi_m^2\der G &(\part_{x_l}\a[h])\part^2_{v_iv_j}\derv{''}{} G \nonumber\\
&\equiv -\jap{v}^{2\omega_{\alpha,\beta}} \psi_m^2\part_{v_i}\part_{x_l}\derv{''}{} G(\part_{x_l}\a[h])\part_{v_j}\derv{''}{}G \nonumber\\
&\quad -2\jap{v}^{2\omega_{\alpha,\beta}}\psi_m\part_{v_i}\psi_m\der G (\part_{x_l}\a[h])\part_{v_j}\derv{''}{}G \nonumber\\
&\quad-2(\omega_{\alpha,\beta})v_i\jap{v}^{2\omega_{\alpha,\beta}-2}\psi_m^2\der G (\part_{x_l}\a[h])\part_{v_j}\derv{''}{}G \nonumber\\
&\quad-\jap{v}^{2\omega_{\alpha,\beta}}\psi_m^2\der G(\part_{v_i}\part_{x_l}\a[h])\part_{v_j}\derv{''}{} G \nonumber\\
&\equiv \frac{1}{2}\jap{v}^{2\omega_{\alpha,\beta}}\psi_m^2\part_{v_i}\derv{''}{}g(\part_{x_l}^2 \a[h])\part_{v_j}\derv{''}{} g\label{e.x_der_a_1}\\
&\quad -2\jap{v}^{2\omega_{\alpha,\beta}}\psi_m\part_{v_i}\psi_m\der G (\part_{x_l}\a[h])\part_{v_j}\derv{''}{}G \label{e.x_der_a_2}\\
&\quad-2(\omega_{\alpha,\beta})v_i\jap{v}^{2\omega_{\alpha,\beta}-2}\psi_m^2\der G (\part_{x_l}\a[h])\part_{v_j}\derv{''}{}G \label{e.x_der_a_3}\\
&\quad-\jap{v}^{2\omega_{\alpha,\beta}}\psi_m^2\der G(\part_{v_i}\part_{x_l}\a[h])\part_{v_j}\derv{''}{} G \label{e.x_der_a_4}.
\end{align}

By our definitions we have that $|\eref{x_der_a_4}|+|\eref{x_der_a_1}|\lesssim T^{\alpha,\beta}_{3,1}$, $|\eref{x_der_a_2}|\lesssim \mathcal{B}_4^{\alpha,\beta}$, $|\eref{x_der_a_3}|\lesssim T^{\alpha,\beta}_{3,3}$.

\emph{Case 2}:  When $(\alpha',\beta')=(0,1)$. That is $\part_v^{\beta'}=\part_{v_l}$.\\
We perform integration by parts twice first with $\part_{v_i}$ then followed by $\part_{v_l}$.
\begin{align}
\jap{v}^{2\omega_{\alpha,\beta}}\psi^2_m\der G& (\part_{v_l}\a[h])\part^2_{v_iv_j}\derv{}{''} G \nonumber\\
&\equiv -\jap{v}^{2\omega_{\alpha,\beta}}\psi^2_m \part_{v_i}\part_{v_l}\derv{}{''} G(\part_{v_l}\a[h])\part_{v_j}\derv{}{''}G \nonumber\\
& \quad-2\jap{v}^{2\omega_{\alpha,\beta}}\psi_m\part_{v_i}\psi_m\der G (\part_{v_l}\a[h])\part_{v_j}\derv{}{''}G \nonumber\\
&\quad-2(\omega_{\alpha,\beta})v_i\jap{v}^{2\omega_{\alpha,\beta}-2}\psi_m^2\der G (\part_{v_l}\a[h])\part_{v_j}\derv{}{''}G \nonumber\\
&\quad-\jap{v}^{2\omega_{\alpha,\beta}}\psi_m^2\der G(\part_{v_i}\part_{v_l}\a[h])\part_{v_j}\derv{}{''} G \nonumber\\
&\equiv \frac{1}{2}\jap{v}^{2\omega_{\alpha,\beta}}\psi_m^2\part_{v_i}\derv{}{''}g(\part_{v_l}^2 \a[h])\part_{v_j}\derv{}{''} G \label{e.v_der_a_1}\\
&\quad+\jap{v}^{2\omega_{\alpha,\beta}}\psi_m\part_{v_l}\psi_m\part_{v_i}\derv{}{''} G (\part_{v_l}\a[h])\part_{v_j}\derv{}{''}G \label{e.v_der_a_2}\\
&\quad+(\omega_{\alpha,\beta})v_l\jap{v}^{2\omega_{\alpha,\beta}-2}\psi^2_m\derv{}{''} G (\part_{v_l}\a[h])\part_{v_j}\derv{}{''}G \label{e.v_der_a_3}\\
&\quad -2\jap{v}^{2\omega_{\alpha,\beta}}\psi_m\part_{v_i}\psi_m\der G (\part_{v_l}\a[h])\part_{v_j}\derv{}{''}G \label{e.v_der_a_4}\\
&\quad-2(\omega_{\alpha,\beta})v_i\jap{v}^{2\omega_{\alpha,\beta}-2}\psi_m^2\der G (\part_{v_l}\a[h])\part_{v_j}\derv{}{''}G \label{e.v_der_a_5}\\
&\quad-\jap{v}^{2\omega_{\alpha,\beta}}\psi_m^2\der G(\part_{v_i}\part_{v_l}\a[h])\part_{v_j}\derv{}{''} G \label{e.v_der_a_6}.
\end{align}

Again, we have that $|\eref{v_der_a_4}|+|\eref{v_der_a_1}|\lesssim T^{\alpha,\beta}_{3,1}$, $|\eref{v_der_a_2}|+|\eref{v_der_a_4}|\lesssim \mathcal{B}_4^{\alpha,\beta}$, $|\eref{v_der_a_3}|+|\eref{v_der_a_5}|\lesssim T^{\alpha,\beta}_{3,3}$.\\
When we have more than $1$ derivative hitting $\a[h]$ then we don't need to perform integration by parts. Since we treat the two cases- when less than $8$ derivatives hit $\a[h]$ and when at least $9$ derivatives hit $\a[h]$- differently, we use different error terms to bound them.

Explicitly, when $|\alpha'|+|\beta'|\leq 8$ we have $|\text{Term } 3|\lesssim T_{3,1}^{\alpha,\beta}$
 and when $|\alpha'|+|\beta'|\geq 9$ we have $|\text{Term } 3|\lesssim T_{3,2}^{\alpha,\beta}$.

Thus in total we have,
$$|\text{Term }3|\lesssim T^{\alpha,\beta}_{3,1}+T^{\alpha,\beta}_{3,2}+T^{\alpha,\beta}_{3,3}+\mathcal{B}^{\alpha,\beta}_{4}.$$
Note that depending on how many derivatives hit $\a[h]$ some of these terms might be zero.

\emph{Term 4:} This term can be bounded in a straight forward way by $T^{\alpha,\beta}_4$.

\emph{Term 5:} Again, if we have no derivatives falling on $\a[h]\frac{v_i}{\jap{v}}$ then we need to do integration by parts. More precisely, we have
\begin{equation}\label{e.Term_5}
\begin{split}
\part^\alpha_x \part_v^\beta \left((\a[h]+\eps\delta_{ij}) \frac{v_i}{\jap{v}}\part_{v_j}G\right)&=\sum_{\substack{|\alpha'|+|\alpha''|=|\alpha|\\ |\beta'|+|\beta''|=|\beta|\\ |\alpha'|+|\beta'|\geq 1}}\left( \derv{'}{'}\left((\a[h]+\eps\delta_{ij}) \frac{v_i}{\jap{v}}\right)\right)(\part_{v_j}\derv{''}{''}G)\\
&\quad+(\a[h]+\eps\delta_{ij}) \frac{v_i}{\jap{v}}\part_{v_j}\der G.
\end{split}
\end{equation}
For the second term we use integration by parts in $\part_{v_j}$ to get,
\begin{align}
2(d(t))\jap{v}^{2\omega_{\alpha,\beta}}\psi_m^2\der G&(\a[h]+\eps\delta_{ij})\frac{v_i}{\jap{v}}\part_{v_j}\der G \nonumber\\
&\equiv -(d(t)) \jap{v}^{2\omega_{\alpha,\beta}}\psi_m^2\left(\part_{v_j}\left((\a[h]+\eps\delta_{ij})\frac{v_i}{\jap{v}}\right)\right)(\der G)^2 \label{e.a_cont_v_1}\\
&\quad-2d(t)\omega_{\alpha,\beta}\jap{v}^{2\omega_{\alpha,\beta}-2}v_j\psi_m^2\left((\a[h]+\eps\delta_{ij}) \frac{v_i}{\jap{v}}\right)(\der G)^2 \label{e.a_cont_v_2}\\
&\quad-2d(t)\jap{v}^{2\omega_{\alpha,\beta}}\psi_m\part_{v_j}\psi_m\left((\a[h]+\eps\delta_{ij}) \frac{v_i}{\jap{v}}\right)(\der G)^2 \label{e.a_cont_v_3}.
\end{align}
The first term in \eref{Term_5} is bounded by $T^{\alpha,\beta}_{5,1}$. We also have,
$$|\eref{a_cont_v_1}|\lesssim T^{\alpha,\beta}_{5,1}, |\eref{a_cont_v_2}|\lesssim  T^{\alpha,\beta}_{5,2} \text{ and } |\eref{a_cont_v_3}|\lesssim \mathcal{B}^{\alpha,\beta}_5$$
 
\emph{Term 6:} For first part of Term 6 note that $\delta_{ij} \a[h]=\bar{a}_{ii}[h]$ and when no derivatives hit $\frac{\bar{a}_{ii}[h]}{\jap{v}}$ then we have the term $$-d(t)\int_0^T\int \int \jap{v}^{2\omega_{\alpha,\beta}}\psi_m^2(\der G)^2 \frac{\bar{a}_{ii}[h]+\eps}{\jap{v}}\d v \d x\d t.$$ Note, crucially that since $h\geq 0$, we have that $\frac{\bar{a}_{ii}[h]}{\jap{v}}\geq 0$ implying that the whole integral is negative and thus can be dropped.\\
When at least one derivative fall on $\frac{\bar{a}_{ii}[h]+\eps}{\jap{v}}$ then we can bound it by $T^{\alpha,\beta}_{6,1}$.

Similarly the second part of Term 6 can be bounded by $T^{\alpha,\beta}_{6,2}$.
\end{proof}
\begin{remark}
It is seen easily that $\mathcal{A}^{\alpha,\beta}_1\leq T^{\alpha,\beta}_{3,1}$ and $\mathcal{A}^{\alpha,\beta}_3\leq T^{\alpha,\beta}_{3,3}$. Thus it suffices to bound the $T^{\alpha,\beta}_i$, $\mathcal{A}^{\alpha,\beta}_2$ and $\mathcal{B}^{\alpha,\beta}_i$ terms.
\end{remark}
\section{Estimates for the coefficients}\label{s.coefficient_bounds}
In the next section we estimate all the errors but before we can do that we need to get a bound on the coefficient matrices, $\bar{a}$ and $\bar{c}$, and their derivatives. We thus begin with bounds for $\bar{a}$ and its derivatives.
\begin{proposition}\label{p.pointwise_estimates_a}
We assume $|\alpha|+|\beta|\leq 10$.\\
The coefficient $\a$ and its higher derivatives satisfy the following pointwise bounds:
\begin{equation}\label{e.pw_bound_a}
\max_{i,j}|\derv{}{} \a|(t,x,v)\lesssim \int |v-v_*|^{2+\gamma}|\der f|(t,x,v_*)\d v_*,
\end{equation}
\begin{equation}\label{e.pw_bound_a_v}
\max_{j}\left|\der \left(\a \frac{v_i}{\jap{v}}\right)\right|\lesssim \jap{v}^{1+\gamma} \int \jap{v_*}^{2+\gamma}|\der f|(t,x,v_*)\d v_*,
\end{equation}
\begin{equation}\label{e.pw_bound_a_2v}
\left|\der\left(\a \frac{v_iv_j}{\jap{v}^2}\right)\right|(t,x,v)\lesssim \jap{v}^\gamma\int\jap{v_*}^4 |\der f|(t,x,v_*)\d v_* .
\end{equation}

The first v-derivatives of $\a$ and the corresponding higher derivatives satisfy:
\begin{equation}\label{e.pw_bound_der_a}
\max{i,j,k}\left|\der \part_{v_k}\a\right|(t,x,v)\lesssim \int |v-v_*|^{1+\gamma}|\der f|(t,x,v_*)\d v_*,
\end{equation}
\begin{equation}\label{e.pw_bound_der_a_v}
\max{i,j,k}|\der \part_{v_k}\left(\a(t,x,v)\frac{v_i}{\jap{v}}\right)|\lesssim \jap{v}^\gamma\int \jap{v_*}^{2+\gamma}|\der f|(t,x,v_*)\d v_*.
\end{equation}

Finally, the second derivatives of $\a$ and its higher derivatives follow:
\begin{equation}\label{e.pw_bound_2der_a}
\max_{i,j,k,l}|\der \partial^2_{v_kv_l}\a|(t,x,v)\lesssim \int |v-v_*|^\gamma |\der f|(t,x,v_*)\d v_*.
\end{equation}
\end{proposition}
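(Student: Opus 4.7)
The strategy rests on two structural features of $a_{ij}$. First, $\bar a_{ij}[f](t,x,v) = \int a_{ij}(v-v_*)f(t,x,v_*)\,dv_*$ is a convolution in $v$, so $\partial_x^\alpha$ passes inside the integral and lands on $f$, while the identity $\partial_v a_{ij}(v-v_*) = -\partial_{v_*} a_{ij}(v-v_*)$ lets us transfer any $\partial_v^\beta$ derivatives of the kernel onto $f$ by integration by parts in $v_*$. Second, since $a_{ij}(z)$ is $|z|^{\gamma+2}$ times the orthogonal projection onto $z^\perp$, one has $a_{ij}(z)z_i = a_{ij}(z)z_j = 0$. Writing $v = (v-v_*) + v_*$ then yields the crucial cancellations
\[
a_{ij}(v-v_*)v_i = a_{ij}(v-v_*)(v_*)_i, \qquad a_{ij}(v-v_*)v_iv_j = a_{ij}(v-v_*)(v_*)_i(v_*)_j,
\]
trading factors of $v$ for factors of $v_*$ inside the convolution.

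With these tools each estimate is a direct computation. For \eref{pw_bound_a}, combine convolution commutativity with IBP in $v_*$ and apply $|a_{ij}(z)| \leq |z|^{\gamma+2}$. For \eref{pw_bound_der_a} and \eref{pw_bound_2der_a}, use the positive homogeneity of $a_{ij}$ of degree $\gamma+2$ to obtain $|\partial_v a_{ij}(z)| \lesssim |z|^{\gamma+1}$ and $|\partial_v^2 a_{ij}(z)| \lesssim |z|^\gamma$, differentiate $a_{ij}$ in $v$ inside the integral, and run the convolution argument for the remaining $\der$ derivatives. For \eref{pw_bound_a_v}, first apply the single cancellation to rewrite
\[
\bar a_{ij}[f]\,\frac{v_i}{\jap{v}} = \frac{1}{\jap{v}}\int a_{ij}(v-v_*)(v_*)_i\,f(v_*)\,dv_*,
\]
and then Leibniz-expand $\der$: the $\partial_x^\alpha$'s pass inside, and the $\partial_v^\beta$'s are distributed between the prefactor $\jap{v}^{-1}$ (where $|\partial_v^{\beta'}\jap{v}^{-1}| \lesssim \jap{v}^{-1}$) and the integral, in which after IBP in $v_*$ they fall on $(v_*)_i f$ (at most one on the linear factor $(v_*)_i$). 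Combining $|a_{ij}(v-v_*)(v_*)_i| \leq |v-v_*|^{\gamma+2}\jap{v_*}$ with $|v-v_*|^{\gamma+2} \lesssim \jap{v}^{\gamma+2} + \jap{v_*}^{\gamma+2}$ produces the claimed $\jap{v}^{1+\gamma}\jap{v_*}^{2+\gamma}$ structure. Estimates \eref{pw_bound_a_2v} and \eref{pw_bound_der_a_v} follow analogously: the former uses the double cancellation with prefactor $\jap{v}^{-2}$, the latter combines a single $v$-derivative on $a_{ij}$ (lowering its degree to $\gamma+1$) with the single cancellation and the IBP/Leibniz scheme above.

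The principal technical obstacle is the combinatorial bookkeeping in \eref{pw_bound_a_v}--\eref{pw_bound_der_a_v}: each of the $|\alpha|+|\beta|\leq 10$ derivatives can fall on one of several factors---$\jap{v}^{-k}$, the kernel $a_{ij}(v-v_*)$, or the monomials $(v_*)_i$, $(v_*)_i(v_*)_j$ inside the integral---producing a long list of lower-order terms all needing the same RHS bound. The key conceptual point is that the algebraic cancellations must be applied \emph{before} the Leibniz expansion: the naive bound via $|v_i/\jap{v}| \leq 1$ would leave an extra power of $\jap{v}$, destroying the $\jap{v}^{1+\gamma}$ gain that is precisely what makes the hierarchy of weighted norms close in the hard-potential regime.
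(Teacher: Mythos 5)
Your overall template---treat $\bar a_{ij}[f]$ as a convolution so that $\partial_x^\alpha$ falls on $f$ and $\partial_v^\beta$ can be transferred onto $f$ by integration by parts in $v_*$, then prove pointwise bounds on the remaining explicit kernel---is the same as the paper's, and your handling of \eref{pw_bound_a}, \eref{pw_bound_der_a}, \eref{pw_bound_2der_a} via the homogeneity of $a_{ij}$ is fine.

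However, there is a genuine gap in the way you exploit the cancellation for \eref{pw_bound_a_v} (and the same gap recurs in \eref{pw_bound_a_2v} and \eref{pw_bound_der_a_v}). You use only the null direction $a_{ij}(z)z_i=0$ to replace $v_i$ by $(v_*)_i$, and then apply the gross kernel bound $|a_{ij}(z)|\le|z|^{\gamma+2}$, arriving at $|a_{ij}(v-v_*)(v_*)_i|\le|v-v_*|^{\gamma+2}\jap{v_*}$. After dividing by $\jap{v}$ and expanding $|v-v_*|^{\gamma+2}\lesssim\jap{v}^{\gamma+2}+\jap{v_*}^{\gamma+2}$, the $v_*$-dominated piece is $\jap{v_*}^{\gamma+3}/\jap{v}$, and this is \emph{not} $\lesssim\jap{v}^{1+\gamma}\jap{v_*}^{2+\gamma}$: take $v$ in a bounded set and let $|v_*|\to\infty$. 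The same loss appears in your \eref{pw_bound_a_2v} argument (you would get $\jap{v_*}^{\gamma+4}$ rather than $\jap{v_*}^4$).

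The extra structure the paper uses is stronger than the single orthogonality relation: it is the explicit algebraic identity
\[
a_{ij}(v-v_*)v_i=|v-v_*|^{\gamma}\,(v-v_*)_l\bigl(v_l(v_*)_j-v_j(v_*)_l\bigr),
\qquad
a_{ij}(v-v_*)v_iv_j=|v-v_*|^{\gamma}\bigl(|v|^2|v_*|^2-(v\cdot v_*)^2\bigr),
\]
(the second being the Lagrange identity $|v\times v_*|^2$). The point is that the naive scaling $|z|^{\gamma+2}\cdot|v|$ is improved to $|z|^{\gamma+1}\cdot|v||v_*|$ in the first case, and $|z|^{\gamma+2}\cdot|v|^2$ improved to $|z|^\gamma\cdot|v|^2|v_*|^2$ in the second: two powers of $|z|$ are traded for powers of $|v|$ that are then exactly eaten by the prefactors $\jap{v}^{-1}$, $\jap{v}^{-2}$. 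Only after this trade does the triangle-inequality expansion of $|v-v_*|^{\gamma+1}$ (resp.\ $|v-v_*|^\gamma$) land inside the claimed weight. This anisotropic cancellation is the same mechanism the paper invokes elsewhere for $\a v_iv_j$, and it cannot be recovered from $a_{ij}(z)z_i=0$ plus $|a_{ij}(z)|\le|z|^{\gamma+2}$ alone; your sketch should be amended to compute the kernel explicitly rather than substituting $(v_*)_i$ for $v_i$ and then using the crude bound.
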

\begin{proof}
We use the following facts for convolutions with $|\beta'|\leq 2$,
$$\der \part^{\beta'}_v \a[h]=\int (\part_v^{\beta'}a_{ij}(v-v_*))(\der h)(t,x,v_*) \d v_*$$
$$\der \part^{\beta'}_v \left(\a[h]\frac{v_i}{\jap{v}}\right)=\int (\part_v^{\beta'}\left(a_{ij}\frac{v_i}{\jap{v}}\right)(v-v_*))(\der h)(t,x,v_*) \d v_*$$
\emph{Proof of \eref{pw_bound_a}:} For this we just notice via the form of matrix $a$ in \eref{matrix_a} that $$|a_{ij}(v-v_*)|\leq |v-v_*|^{2+\gamma}.$$

\emph{Proof of \eref{pw_bound_a_v}:} For \eref{pw_bound_a_v} we use \eref{matrix_a} to get
\begin{equation}\label{e.int_pw_bound_a_v}
\begin{split}
a_{ij}(v-v_*)v_i&=|v-v_*|^{2+\gamma}\left(v_j-\frac{(v\cdot (v-v_*))(v-v_*)_j}{|v-v_*|^2}\right)\\
&=|v-v_*|^\gamma\left(v_j(v-v_*)_l(v-v_*)_l-v_l(v-v_*)_l (v-v_*)_j\right)\\
&=|v-v_*|^\gamma (v-v_*)_l(v_l(v_*)_j-v_j(v_*)_l).
\end{split}
\end{equation}
Thus we have using triangle inequality that $\left|a_{ij}(v-v_*)\frac{v_i}{\jap{v}}\right| \lesssim \jap{v}^{1+\gamma}\jap{v_*}^{2+\gamma}$.

\emph{Proof of \eref{pw_bound_a_2v}:} For \eref{pw_bound_a_2v} we again begin with \eref{matrix_a} and see using triangle inequality
\begin{equation}
\begin{split}
a_{ij}(v-v_*)v_iv_j&=|v-v_*|^{2+\gamma}\left(|v|^2-\frac{(v\cdot (v-v_*))^2}{|v-v_*|^2}\right) \\
&=|v-v_*|^{\gamma}(|v|^2(|v|^2+|v_*|^2-2(v\cdot v_*))-|v|^4+2|v|^2(v\cdot v_*)-(v\cdot v_*)^2)\\
&=|v-v_*|^\gamma(|v|^2|v_*|^2-(v\cdot v_*)^2)\\
&\lesssim |v-v_*|^\gamma|v|^2|v_*|^2\\
&\lesssim \jap{v}^{2+\gamma}\jap{v_*}^{4+\gamma}.
\end{split}
\end{equation}
Hence we have that $\left|\frac{a_{ij}v_iv_j}{\jap{v}^2}\right|\lesssim \jap{v}^\gamma\jap{v_*}^{4+\gamma}$.

\emph{Proof of \eref{pw_bound_der_a}:} Using homogeneity we have,
$$|\part_{v_k}a_{ij}|\lesssim |v-v_*|^{1+\gamma},$$ which implies \eref{pw_bound_der_a}.

\emph{Proof of \eref{pw_bound_der_a_v}:} To prove \eref{pw_bound_der_a_v}, we use \eref{int_pw_bound_a_v} to get
\begin{equation*}
\begin{split}
\part_{v_k}\left(a_{ij}(v-v_*)\frac{v_i}{\jap{v}}\right)&=\part_{v_k}\left(\frac{1}{\jap{v}}|v-v_*|^\gamma (v-v_*)_l(v_l(v_*)_j-v_j(v_*)_l)\right).
\end{split}
\end{equation*}

Using the product rule we get mutiple terms which we treat one by one now.

When $\partial_{v_k}$ hit $\frac{1}{\jap{v}}$ we get upto a constant $\frac{v_k}{\jap{v}^3}$, thus we have using triangle inequality $$\frac{v_k}{\jap{v}^3}|v-v_*|^\gamma (v-v_*)_l(v_l(v_*)_j-v_j(v_*)_l)\lesssim \jap{v}^\gamma\jap{v_*}^{2+\gamma}.$$

When $\partial_{v_k}$ hit $|v-v_*|^\gamma$ we get upto a constant $(v-v_*)_k|v-v_*|^{\gamma-2}$, hence using the fact that $\frac{(v-v_*)_k(v-v_*)l}{|v-v_*|^2}\lesssim 1$ and triangle inequality we get
$$\frac{1}{\jap{v}}|v-v_*|^\gamma\frac{(v-v_*)_k(v-v_*)l}{|v-v_*|^2}(v_l(v_*)_j-v_j(v_*)_l) \lesssim \jap{v}^\gamma\jap{v_*}^{\gamma+1}.$$

Finally when $\part_{v_k}$ hits $(v_l(v_*)_j-v_j(v_*)_l)$ we get $(\delta_{lk}(v_*)_j-\delta_{jk}(v_*)_l)$.

Again, by triangle inequality we get 
$$\frac{1}{\jap{v}}|v-v_*|^\gamma (v-v_*)_l(\delta_{lk}(v_*)_j-\delta_{jk}(v_*)_l)\lesssim \jap{v}^\gamma\jap{v_*}^{2+\gamma}.$$

Hence in total we have $$\part_{v_k}\left(a_{ij}(v-v_*)\frac{v_i}{\jap{v}}\right) \lesssim \jap{v}^{\gamma}\jap{v_*}^{2+\gamma}.$$

\emph{Proof of \eref{pw_bound_2der_a}:} Finally, for the second derivatives of $a_{ij}$ we obtain by homogeneity\\
$$|\part_{v_l}\part_{v_k}a_{ij}(v-v_*)|\lesssim |v-v_*|^\gamma.$$ which implies \eref{pw_bound_2der_a}
\end{proof}
Since $c=\part^2_{z_iz_j}a_{ij}(z)$, we see that $\cm$ and its higher derivatives satisfy the same bounds as \eref{pw_bound_2der_a}.

Now we note a very simple interpolation inequality that will let us estimate the coefficient matrices in $L^\infty_v$.\\
\begin{lemma}\label{l.L1_to_L2_v}
Let $h:[0,T_0)\times\R^3\times\R^3$ be a smooth function, then\\
$$\norm{h}_{L^1_v}(t,v)\lesssim \norm{\jap{v}^2h}_{L^2_v}(t,x).$$
\end{lemma}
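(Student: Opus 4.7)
The plan is to prove this by a weighted Cauchy--Schwarz inequality, exploiting the fact that the weight $\jap{v}^{-2}$ is square-integrable on $\R^3$ since $2\cdot 2 = 4 > 3$.

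More concretely, I would write
\begin{equation*}
\int_{\R^3} |h(t,x,v)|\, \d v = \int_{\R^3} \jap{v}^{-2}\cdot \jap{v}^{2}|h(t,x,v)|\, \d v,
\end{equation*}
and apply Cauchy--Schwarz in $v$ to split this into
\begin{equation*}
\left(\int_{\R^3} \jap{v}^{-4}\, \d v\right)^{1/2}\left(\int_{\R^3} \jap{v}^{4}|h(t,x,v)|^{2}\, \d v\right)^{1/2}.
\end{equation*}
The first factor is a finite constant depending only on the dimension (passing to spherical coordinates, the integral is $4\pi\int_0^\infty (1+r^2)^{-2} r^2\, dr < \infty$). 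The second factor is exactly $\norm{\jap{v}^{2}h}_{L^2_v}(t,x)$, so the claimed bound follows with an implicit constant independent of $h$, $t$, $x$.

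There is no real obstacle here; the only thing to check is that the weight exponent $2$ in the statement is large enough to beat the dimension, which it is since $2d = 4 > 3 = d$. I would likely state this as a one-line computation and move on, possibly remarking that the same argument gives the more general inequality $\norm{h}_{L^1_v}\lesssim \norm{\jap{v}^{s}h}_{L^2_v}$ for any $s>3/2$, but the value $s=2$ is all that is needed in the sequel.
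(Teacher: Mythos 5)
Your proof is correct and is essentially identical to the paper's: both split $|h| = \jap{v}^{-2}\cdot\jap{v}^{2}|h|$, apply Cauchy--Schwarz (H\"older) in $v$, and use that $\int_{\R^3}\jap{v}^{-4}\,\d v<\infty$ since $4>3$. Nothing further to add.
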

\begin{proof}
We just use Holder's inequality to get\\
\begin{align*}
\int |h|(t,x,v)\d v&\lesssim (\int \jap{v}^4h^2(t,x,v)\d v)^{\frac{1}{2}}(\int \jap{v}^{-4}\d v)^{\frac{1}{2}}\\
&\lesssim  (\int \jap{v}^4h^2(t,x,v)\d v)^{\frac{1}{2}}.
\end{align*}
\end{proof}
As will become clear from \lref{special_holder_der_leq_8} and \lref{special_holder_der_geq_9}, we estimate the derivatives of $\a[h]$ and $\cm[h]$ in different $L^p_x$ spaces but always in $L^\infty_v$. Thus in the following lemma we establish an estimate in $L^\infty_v$.

But before that we note a simple bound on derivatives of $h$.
\begin{lemma}\label{l.exp_bound}
For every $l\in \N$, the following estimate holds with a constant depending on $l,\gamma$ and $d_0$ for any $(t,x,v) \in [0,T_0)\times \R^3\times \R^3$
$$\jap{v}^l|\der h|(t,x,v)\lesssim_l \sum \limits_{|\beta'|\leq |\beta|}|\derv{}{'} \h|(t,x,v),$$
where $\h(t,x,v)=he^{d(t)\jap{v}}$.
\end{lemma}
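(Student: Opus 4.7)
The plan is to invert the defining relation and expand by Leibniz. Writing $h = \h\,e^{-d(t)\jap{v}}$, I apply $\derv{}{}$: because the exponential factor depends only on $t$ and $v$, every spatial derivative $\part_x^\alpha$ must land on $\h$, while the velocity derivatives distribute between $\h$ and the exponential. This gives
\[
\derv{}{} h \;=\; \sum_{\beta'+\beta''=\beta} \binom{\beta}{\beta'}\bigl(\part_v^{\beta'}e^{-d(t)\jap{v}}\bigr)\bigl(\derv{}{''}\h\bigr),
\]
so the multi-index $\alpha$ stays fixed and $|\beta''|\le|\beta|$, which matches the index set appearing on the right-hand side of the lemma.

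The key subtask is to dominate $\jap{v}^l\,|\part_v^{\beta'}e^{-d(t)\jap{v}}|$ by a constant depending only on $l$, $|\beta'|$, and $d_0$. I would argue by induction on $|\beta'|$ that
\[
\part_v^{\beta'}e^{-d(t)\jap{v}} \;=\; Q_{\beta'}(t,v)\,e^{-d(t)\jap{v}},
\]
where $Q_{\beta'}$ is a smooth function uniformly bounded on $[0,T_0]\times\R^3$ with bound depending only on $|\beta'|$ and $d_0$. Indeed, a first $v$-derivative of the exponential yields a factor $-d(t)v_i/\jap{v}$ of modulus at most $d_0$; subsequent $v$-derivatives either differentiate such rational factors (producing smooth rational functions whose denominators dominate their numerators, hence bounded) or spawn new bounded factors of the same form. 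So $Q_{\beta'}$ is a polynomial in $d(t)$ with smooth, uniformly bounded coefficients.

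It remains to observe that $\jap{v}^l e^{-d(t)\jap{v}} \lesssim_{l,d_0} 1$. This is where the choice of $T_0=d_0/(2\kappa)$ enters: on $[0,T_0]$ we have $d(t)\ge d_0/2>0$, so $\jap{v}^l e^{-d(t)\jap{v}} \le \jap{v}^l e^{-(d_0/2)\jap{v}}$, which is bounded uniformly in $v$ by a constant depending only on $l$ and $d_0$. Substituting into the Leibniz expansion above and collecting all constants into $\lesssim_l$ (which is allowed to depend on $l$, $\gamma$, and $d_0$ by the paper's convention) yields the claimed estimate. There is no real analytic obstacle here; the entire proof is just bookkeeping, and the single point to be careful about is that $d(t)$ is bounded below by a strictly positive constant on the interval of interest, which is precisely what the definition of $T_0$ guarantees.
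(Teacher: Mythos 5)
Your proof is correct and uses essentially the same argument the paper intends in its one-line proof: write $h=\h\,e^{-d(t)\jap{v}}$, expand by Leibniz (only velocity derivatives hit the exponential factor), bound the derivatives of $e^{-d(t)\jap{v}}$ by $e^{-d(t)\jap{v}}$ times bounded rational prefactors, and use that $d(t)\geq d_0/2$ on $[0,T_0)$ so $\jap{v}^n e^{-d(t)\jap{v}}\lesssim_{n,d_0}1$. Nothing to add.
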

\begin{proof}
This is immediate from differentiating $\h$ and using that $\jap{v}^ne^{-d(t)\jap{v}}\lesssim_n 1$ for all $n\in \N$.
\end{proof}
\begin{lemma}\label{l.L_inf_a}
For $|\alpha|+|\beta|\leq 10$, we have the following $L^\infty_v$ bounds
\begin{equation}\label{e.L_inf_a}
\norm{\jap{v}^{-2-\gamma}\der \a[h]}_{L^\infty_v}(t,x) \lesssim \norm{\h}_{Y^{\alpha+\beta}_v}(t,x),
\end{equation}
\begin{equation}\label{e.L_inf_a_v}
\norm{\jap{v}^{-1-\gamma}\der \left(\a[h]\frac{v_i}{\jap{v}}\right)}_{L^\infty_v}(t,x) \lesssim \norm{\h}_{Y^{\alpha+\beta}_v}(t,x),
\end{equation}
\begin{equation}\label{e.L_inf_a_2_v}
\norm{\jap{v}^{-\gamma}\der \left(\a[h]\frac{v_iv_j}{\jap{v}^2}\right)}_{L^\infty_v}(t,x) \lesssim \norm{\h}_{Y^{\alpha+\beta}_v}(t,x).
\end{equation}

If $|\beta|\geq 1$ then we have,
\begin{equation}\label{e.L_inf_a_1_der}
\norm{\jap{v}^{-1-\gamma}\derv{}{'} \part_{v_l} \a[h]}_{L^\infty_v}(t,x) \lesssim\norm{\h}_{Y^{\alpha+\beta'}_v}(t,x),
\end{equation}
\begin{equation}\label{e.L_inf_a_v_1_der}
\norm{\jap{v}^{-\gamma}\derv{}{'} \part_{v_l} \left(\a[h]\frac{v_i}{\jap{v}}\right)}_{L^\infty_v}(t,x) \lesssim \norm{\h}_{Y^{\alpha+\beta'}_v}(t,x),
\end{equation}
where $\part_{v}^\beta=\part_v^{\beta'}\part_{v_l}$.

if $|\beta|\geq 2$ then we have,
\begin{equation}\label{e.L_inf_a_2_der}
\norm{\jap{v}^{-\gamma}\derv{}{'} \part^2_{v_lv_k} \a[h]}_{L^\infty_v}(t,x) \lesssim \norm{\h}_{Y^{\alpha+\beta'}_v}(t,x)(t,x),
\end{equation}
where $\part_{v}^\beta=\part_v^{\beta'}\part^2_{v_kv_l}.$
\end{lemma}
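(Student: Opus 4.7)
The plan is to derive each of the seven bounds from the matching pointwise estimate in Proposition \pref{pointwise_estimates_a} via a uniform three-step template: (i) use the pointwise bound to reduce $\partial^\alpha_x\partial^\beta_v\bar a[h](v)$ to a $v_*$-convolution against $\partial^\alpha_x\partial^\beta_v h$, with the $v$-growth separated off as an explicit power of $\langle v\rangle$; (ii) apply Cauchy--Schwarz (or Lemma \lref{L1_to_L2_v}) in $v_*$ against a negative polynomial weight to convert the $L^1_{v_*}$ integral into a weighted $L^2_{v_*}$ norm of $\partial^\alpha_x\partial^\beta_v h$; (iii) invoke Lemma \lref{exp_bound} to absorb the polynomial weight into velocity derivatives of $\mathfrak H$ and recognize the resulting sum as part of $\|\mathfrak H\|_{Y^{|\alpha|+|\beta|}_v}$.

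Concretely, for \eref{L_inf_a} the pointwise bound \eref{pw_bound_a} combined with $|v-v_*|^{2+\gamma}\lesssim \langle v\rangle^{2+\gamma}\langle v_*\rangle^{2+\gamma}$ (valid since $\gamma+2\ge 0$) yields
$$\langle v\rangle^{-2-\gamma}|\partial^\alpha_x\partial^\beta_v\bar a[h]|(t,x,v)\lesssim \int \langle v_*\rangle^{2+\gamma}|\partial^\alpha_x\partial^\beta_v h|(t,x,v_*)\,dv_*,$$
whose right side is independent of $v$, so taking the $L^\infty_v$ norm is immediate. Cauchy--Schwarz against $\langle v_*\rangle^{-s}$ with any $s>3/2$ then gives
$$\int \langle v_*\rangle^{2+\gamma}|\partial^\alpha_x\partial^\beta_v h|\,dv_*\lesssim \|\langle v_*\rangle^{2+\gamma+s}\partial^\alpha_x\partial^\beta_v h\|_{L^2_v}(t,x),$$
and Lemma \lref{exp_bound} (used with $l = \lceil 2+\gamma+s\rceil$) produces a sum of $\|\partial^\alpha_x\partial^{\beta''}_v\mathfrak H\|_{L^2_v}$ over $|\beta''|\le|\beta|$. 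Since $|\alpha|+|\beta|\le 10$ and $\delta_\gamma\le 1/2$ imply $\omega_{\alpha,\beta''}\ge 0$, each such summand is majorized by $\|\langle v\rangle^{\omega_{\alpha,\beta''}}\partial^\alpha_x\partial^{\beta''}_v\mathfrak H\|_{L^2_v}$, which is a summand of $\|\mathfrak H\|_{Y^{|\alpha|+|\beta|}_v}$.

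The remaining six bounds follow by applying the same three-step procedure to the appropriate pointwise inequality from Proposition \pref{pointwise_estimates_a}. The estimates \eref{L_inf_a_v} and \eref{L_inf_a_2_v} start from \eref{pw_bound_a_v} and \eref{pw_bound_a_2v}, which exploit the algebraic cancellation in $a_{ij}v_i$ and $a_{ij}v_iv_j$ so that the prefactor drops to $\langle v\rangle^{1+\gamma}$ and $\langle v\rangle^{\gamma}$; the bounds \eref{L_inf_a_1_der}, \eref{L_inf_a_v_1_der}, \eref{L_inf_a_2_der} use \eref{pw_bound_der_a}, \eref{pw_bound_der_a_v}, \eref{pw_bound_2der_a}, in which one (resp.\ two) velocity derivatives have been shifted from $h$ onto the kernel $a_{ij}(v-v_*)$, reducing the kernel's growth by one (resp.\ two) powers of $|v-v_*|$ at the cost of one fewer derivative on $h$; this is precisely why the assumption $|\beta|\ge 1$ (resp.\ $|\beta|\ge 2$) is required, and why the $Y$-norm on the right drops to $Y^{|\alpha|+|\beta'|}_v$ with $|\beta'|=|\beta|-1$ (resp.\ $|\beta|-2$).

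No serious obstacle is expected beyond bookkeeping. The two quantitative checks are (a) that the negative-weight factor in Step (ii) is $L^2(\R^3)$-integrable, which just needs $s>3/2$ and is independent of $(\alpha,\beta,\gamma)$, and (b) that $\omega_{\alpha,\beta''}\ge 0$ on the relevant range of multi-indices in Step (iii), which is automatic because $|\alpha|+|\beta|\le 10$ gives $\omega_{\alpha,\beta''}\ge 5-10\delta_\gamma\ge 0$ for any $\delta_\gamma\le 1/2$. Both conditions are comfortably satisfied in the regime of the theorem, so the lemma reduces to applying the template case by case.
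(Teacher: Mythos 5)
Your proof is correct and follows the same line as the paper: combine the pointwise bounds of Proposition~\ref{p.pointwise_estimates_a} with a Cauchy--Schwarz/Lemma~\ref{l.L1_to_L2_v} step in $v_*$ to reduce to a polynomially-weighted $L^2_{v_*}$ norm of $\der h$, then use Lemma~\ref{l.exp_bound} to absorb the weight into velocity derivatives of $\h$, and finally majorize by the $Y^{|\alpha|+|\beta|}_v$ norm via $\omega_{\alpha,\beta''}\ge 0$. The only cosmetic difference is that you use a generic $\langle v_*\rangle^{-s}$ with $s>3/2$ where the paper fixes $s=2$ through Lemma~\ref{l.L1_to_L2_v}; both work identically.
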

\begin{proof}
This is just a straightforward combination of appropriate bounds in \pref{pointwise_estimates_a} and \lref{L1_to_L2_v} which implies the bound of the form,
$$\text{LHS}\lesssim \norm{\jap{v}^k\der h}_{L^2_v}(t,x).$$
We finish of by using \lref{exp_bound}.
\end{proof}
In the following lemma we establish an $L^\infty_v$ bound for $\cm[h]$.
\begin{lemma}\label{l.L_inf_c}
For $|\alpha|+|\beta|\leq 10$, we have\\
\begin{equation}\label{e.L_inf_c}
\norm{\jap{v}^{-\gamma}\der \cm[h]}(t,x) \lesssim \norm{\h}_{Y_v^{\alpha+\beta}}(t,x),
\end{equation}
where, again, $\h=he^{d(t)\jap{v}}$.
\end{lemma}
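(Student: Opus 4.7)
The plan is to mimic the proof of \lref{L_inf_a}, exploiting the fact (noted immediately after \pref{pointwise_estimates_a}) that $c=\part^2_{z_iz_j}a_{ij}(z)$ makes $\cm$ and its higher derivatives obey the same pointwise convolution bound as the second $v$-derivatives of $\a$. That is, one has
\begin{equation*}
|\der \cm|(t,x,v)\lesssim \int |v-v_*|^\gamma |\der f|(t,x,v_*)\,\d v_*.
\end{equation*}

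Since $\gamma\in[0,1]$, the triangle inequality yields $|v-v_*|^\gamma\lesssim \jap{v}^\gamma\jap{v_*}^\gamma$, and so after dividing by $\jap{v}^\gamma$ and taking $L^\infty_v$ I arrive at the pointwise-in-$(t,x)$ bound
\begin{equation*}
\norm{\jap{v}^{-\gamma}\der \cm}_{L^\infty_v}(t,x)\lesssim \int \jap{v_*}^\gamma |\der f|(t,x,v_*)\,\d v_*.
\end{equation*}

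Next, I convert this $L^1_{v_*}$ integral into a weighted $L^2_{v_*}$ quantity via \lref{L1_to_L2_v}, applied with $h(v_*)=\jap{v_*}^\gamma |\der f|(t,x,v_*)$. This produces
\begin{equation*}
\norm{\jap{v}^{-\gamma}\der \cm}_{L^\infty_v}(t,x)\lesssim \norm{\jap{v_*}^{2+\gamma}\der f}_{L^2_{v_*}}(t,x).
\end{equation*}
Finally, I use \lref{exp_bound} with $l=2+\gamma$ to trade the weighted $v$-derivatives of $f$ for derivatives of $\h=he^{d(t)\jap{v}}$ of order at most $|\beta|$ (and with the same number of $x$-derivatives). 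Since $|\alpha|+|\beta|\leq 10$, each resulting multi-index $(\alpha,\beta')$ with $|\beta'|\leq |\beta|$ has $\omega_{\alpha,\beta'}\geq 0$, so every term on the right is dominated by $\norm{\h}_{Y^{\alpha+\beta}_v}(t,x)$. Combining the three displays gives exactly \eref{L_inf_c}.

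There is no real obstacle here; the argument is essentially the $L^\infty_v$ bound of \lref{L_inf_a} specialized to the second-derivative case, with the coefficient losses being as mild as possible (only a $\jap{v}^\gamma$ weight), which is why the final lemma is stated in its particularly simple form.
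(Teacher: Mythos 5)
Your argument is correct and coincides with the paper's own proof: both use the pointwise convolution bound $|\der\cm[h]|\lesssim\int|v-v_*|^\gamma|\der h|\,\d v_*$, the triangle inequality $|v-v_*|^\gamma\lesssim\jap{v}^\gamma\jap{v_*}^\gamma$, \lref{L1_to_L2_v} to pass from $L^1_{v_*}$ to a weighted $L^2_{v_*}$ norm, and \lref{exp_bound} to trade for derivatives of $\h$. The only cosmetic slip is writing $\der f$ where the argument of $\cm$ is the input function $h$ (so that the final object is $\h=he^{d(t)\jap{v}}$), which does not affect the reasoning.
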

\begin{proof}
Since we have\\
$$|\der \cm|(t,x,v)\lesssim \int|v-v_*|^\gamma |\der h|\d v_*,$$
the required result just follows by triangle inequality and in the same way as \lref{L_inf_a}.
\end{proof}
\begin{remark}
At the level of the linear equation we can bound $h$ in a much weaker norm but since we will perform an iteration, we stick to the strong norm.
\end{remark}
Having bounded the coefficient matrices we note two general but specialized inequalities which play a crucial role in bounding the error terms in the next section. In fact, we will reduce all the error terms to fit one of the following lemmas.
\begin{lemma}\label{l.special_holder_der_leq_8}
Let $G$ be a solution to \eref{lin_eq_visc_landau} and $\h=e^{d(t)\jap{v}}h$ where $h$ is the nonnegative function from \lref{lin_visc_landau_exis}. For $|\alpha''|+|\beta''|\leq 10$ and $|\alpha'''|+|\beta'''|\leq 10$ we have the following estimate,
\begin{multline*}
\norm{\jap{v}^{\omega_{\alpha'',\beta''}+\omega_{\alpha''',\beta'''}}|\derv{'''}{'''} G|\jap{v}^{\gamma} \norm{\h}_{Y^8_v}|\derv{''}{''} G|}_{L^1([0,T];L^2_xL^2_v)}\\ \lesssim \norm{\h}_{\H}[\norm{\jap{v}^{\omega_{\alpha'',\beta''}}\jap{v}^{\frac{1}{2}}\derv{''}{''}G}_{L^2([0,T];L^2_xL^2_v)}^2+\norm{\jap{v}^{\omega_{\alpha''',\beta'''}}\jap{v}^{\frac{1}{2}}\derv{'''}{'''}G}_{L^2([0,T];L^2_xL^2_v)}^2].
\end{multline*}
\end{lemma}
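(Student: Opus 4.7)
The plan is to reduce the trilinear integral to a product of three scalar norms by applying Cauchy--Schwarz in each variable in turn, then to convert the $L^\infty_x$ factor on $\h$ into an $\H$-norm via Sobolev embedding.

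First, fix $(t,x)$ and apply Cauchy--Schwarz in $v$ to the product $|\derv{'''}{'''}G| \cdot |\derv{''}{''}G|$, distributing the weight $\jap{v}^{\omega_{\alpha'',\beta''}+\omega_{\alpha''',\beta'''}+\gamma}$ evenly between the two factors. Since $\gamma\in[0,1]$, we have $\jap{v}^{\gamma}\leq \jap{v}^{1/2}\jap{v}^{1/2}$, which gives for each $(t,x)$
\begin{equation*}
\int \jap{v}^{\omega_{\alpha'',\beta''}+\omega_{\alpha''',\beta'''}+\gamma}|\derv{'''}{'''}G||\derv{''}{''}G|\,\d v \leq \norm{\jap{v}^{\omega_{\alpha'',\beta''}+\frac12}\derv{''}{''}G}_{L^2_v}\norm{\jap{v}^{\omega_{\alpha''',\beta'''}+\frac12}\derv{'''}{'''}G}_{L^2_v}.
\end{equation*}
This is the crucial step that tees up the quantities appearing on the right-hand side of the claimed inequality; the slack $\jap{v}^{1/2}$ matches exactly the good weight produced by the dissipation in $\sref{set_up}$.

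Next, multiply by $\norm{\h}_{Y^8_v}(t,x)$ and integrate over $x$. I would apply H\"older with the splitting $L^\infty_x \cdot L^2_x \cdot L^2_x$, placing the $\h$-factor in $L^\infty_x$. To convert this into a fixed-time $\H$-norm, I would invoke the Sobolev embedding $H^2(\R^3) \hookrightarrow L^\infty(\R^3)$ applied to $\norm{\h}_{Y^8_v}(t,\cdot)$ in the $x$-variable; since we take $2$ extra $x$-derivatives of an $Y^8_v$ quantity, the upper bound is controlled by $\norm{\h}_{Y^{10}_{x,v}}(t)$, which is in turn bounded by $\norm{\h}_{\H}$ uniformly in $t$. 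Here the hierarchy is monotone enough that $\omega_{\alpha,\beta}\geq 0$ for all such $(\alpha,\beta)$ with $|\alpha|+|\beta|\leq 10$, so no sign issues arise with the weights in $Y^{10}_{x,v}$.

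Finally, Cauchy--Schwarz in $t$ on the remaining $\d t$-integral of a product of two $L^2_xL^2_v$-norms, followed by the AM--GM inequality $2ab\leq a^2+b^2$, converts the bound into the sum of squared $L^2([0,T];L^2_xL^2_v)$ norms appearing on the right-hand side.

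The only real point to watch is the Sobolev step, where I need to make sure that the two extra $x$-derivatives used in $H^2\hookrightarrow L^\infty$ do not push us past the total order $10$ of derivatives controlled in $\H$; since $Y^8_v$ uses up to $8$ velocity derivatives and we add at most $2$ spatial derivatives, the sum is at most $10$, so the argument closes. Everything else is a sequence of routine H\"older/Cauchy--Schwarz splits.
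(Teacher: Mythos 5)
Your proposal follows essentially the same route as the paper: peel the $\h$-factor off in $L^\infty_x$, convert it to $\norm{\h}_{\H}$ via Sobolev embedding $H^2_x\hookrightarrow L^\infty_x$, and then split the remaining bilinear $G$-integral with Cauchy--Schwarz and Young's inequality (with $\gamma\le1$ giving the $\jap{v}^{1/2}$ slack). The paper performs the H\"older/Young step on the full $L^1_{t,x,v}$ integral at once rather than variable by variable, but this is an organizational difference only.

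One remark on the step you correctly flag as the ``only real point to watch.'' Applying Sobolev embedding directly to the weighted quantity $\norm{\h}_{Y^8_v}(t,\cdot)$ produces terms of the form $\norm{\jap{v}^{\omega_{\alpha'',\beta''}}\partial_x^{\alpha_0}\derv{''}{''}\h}_{L^2_xL^2_v}$ with $|\alpha_0|\le2$, and since $\omega_{\alpha''+\alpha_0,\beta''}<\omega_{\alpha'',\beta''}$, these are \emph{not} directly dominated by the corresponding summands of $\norm{\h}_{Y^{10}_{x,v}}$ --- the positivity $\omega_{\alpha,\beta}\ge0$ alone does not resolve the mismatch, because the direction of the weight inequality is wrong. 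What actually saves the argument (and what both your writeup and the paper leave implicit) is that the $Y^8_v$-norm in \lref{L_inf_a} is an overestimate: tracing through \lref{L1_to_L2_v} and \lref{exp_bound}, the coefficient bounds only require the \emph{unweighted} $\sum_{|\beta'|\le|\beta|}\norm{\derv{}{'}\h}_{L^2_v}$, which Sobolev-embeds cleanly, and then $\omega_{\alpha,\beta}\ge0$ is used in the opposite direction to insert the weights and land in $Y^{10}_{x,v}$. Your proof and the paper's share the same level of rigor here, so this is a note for completeness rather than a defect in your argument.
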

\begin{proof}
The idea is to estimate $\norm{\h}_{Y^8_v}$ in $L^\infty_x$ and then use Sobolev embedding to go to $L^2_x$ at the cost of two spatial derivatives.
\begin{equation*}
\begin{split}
& \int_0^T \int_{\Omega_R}\int \jap{v}^{\omega_{\alpha'',\beta''}+\omega_{\alpha''',\beta'''}} |\derv{'''}{'''} G|\jap{v}^{\gamma}\norm{\h}_{Y_v^{8}}|\derv{''}{''} G|\d v \d x\d t\\
&\leq \int_0^T \int_{\Omega_R}\int\jap{v}^{\omega_{\alpha'',\beta''}+\omega_{\alpha''',\beta'''}} |\derv{'''}{'''} G|\jap{v}^{\gamma}\norm{\h}_{L^\infty_x Y_v^8}|\derv{''}{''} G|\d v \d x\d t\\
&\leq \norm{\h}_{L^\infty([0,T];L^2_xY_v)}\int_0^T\int_{\Omega_R}\int \jap{v}^{\omega_{\alpha'',\beta''}+\omega_{\alpha''',\beta'''}} |\derv{'''}{'''} G||\derv{''}{''} G|\d v \d x\d t\\
&\lesssim \norm{\h}_{\H} \int_0^T\int_{\Omega_R}\int \jap{v}^{\omega_{\alpha'',\beta''}+\omega_{\alpha''',\beta'''}}|\derv{'''}{'''} G|\jap{v}^{\gamma}|\derv{''}{''} G|\d v \d x\d t.
\end{split}
\end{equation*}
Now we use Cauchy-Schwatrz followed by Young's inequality to get the desired result
\begin{multline*}
\norm{\jap{v}^{\omega_{\alpha'',\beta''}+\omega_{\alpha''',\beta'''}}\derv{'''}{'''}G \jap{v}^\gamma \derv{''}{''} G}_{L^1([0,T];L^1_xL^1_v)}\\
\leq \frac{1}{2}\norm{\jap{v}^{\omega_{\alpha''',\beta'''}}\jap{v}^{\frac{\gamma}{2}}\derv{'''}{'''}G}_{L^2([0,T];L^2_xL^2_v)}^2+\frac{1}{2}\norm{\jap{v}^{\omega_{\alpha'',\beta''}}\jap{v}^{\frac{\gamma}{2}}\derv{''}{''}G}_{L^2([0,T];L^2_xL^2_v)}^2.
\end{multline*}
\end{proof}
\begin{lemma}\label{l.special_holder_der_geq_9}
Let $G$ be a solution to \eref{lin_eq_visc_landau} and $\h=e^{d(t)\jap{v}}h$ where $h$ is the nonnegative function from \lref{lin_visc_landau_exis}. For $|\alpha''|+|\beta''|\leq 8$, we have the following estimate,
\begin{multline*}
\norm{\jap{v}^{\omega_{\alpha,\beta}+\omega_{\alpha'',\beta''}-(3+2\delta_\gamma)}|\der G|\jap{v}^{\gamma} \norm{\h}_{Y^{10}_v}|\derv{''}{''} G|}_{L^1([0,T];L^2_xL^2_v)}\\ \lesssim \norm{\h}_{\H}[\norm{\jap{v}^{\omega_{\alpha,\beta}}\jap{v}^{\frac{1}{2}}\der G}_{L^2([0,T];L^2_xL^2_v)}^2
+ \norm{\jap{v}^{\omega_{\alpha''',\beta''}}\jap{v}^{\frac{1}{2}}\derv{'''}{''}G}_{L^2([0,T];L^2_xL^2_v)}^2],
\end{multline*}
where we sum over $\alpha'''$ such that $|\alpha|\leq |\alpha'''|\leq |\alpha''|+2$.
\end{lemma}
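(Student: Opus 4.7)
My plan mirrors that of \lref{special_holder_der_leq_8} but exchanges which factor absorbs the Sobolev cost. Since $\norm{\h}_{Y^{10}_v}$ already saturates the full ten derivatives allowed on $\h$, there is no room to spend two additional spatial derivatives on $\h$ via an $H^2_x\hookrightarrow L^\infty_x$ embedding. Instead I would push those two $x$-derivatives onto the factor $\derv{''}{''}G$; this is permissible precisely because of the hypothesis $|\alpha''|+|\beta''|\leq 8$. The weight deficit $-(3+2\delta_\gamma)$ appearing in the hypothesis is calibrated to match the hierarchy loss of shifting two $x$-derivatives: by the definition of $\omega_{\alpha,\beta}$ in \ssref{hie}, increasing $|\alpha''|$ by $2$ decreases $\omega_{\alpha'',\beta''}$ by exactly $3+2\delta_\gamma$.

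Concretely, I would first use $\gamma\leq 1$ to bound $\jap{v}^\gamma\leq\jap{v}$ and split the total $v$-weight as $\jap{v}^{\omega_{\alpha,\beta}+\frac12}\cdot\jap{v}^{\omega_{\alpha'',\beta''}-(3+2\delta_\gamma)+\frac12}$. Since $\norm{\h}_{Y^{10}_v}(t,x)$ is independent of $v$, it pulls outside the $v$-integral, and Cauchy--Schwarz in $v$, applied pointwise in $(t,x)$, distributes these two weights onto $\der G$ and $\derv{''}{''}G$ respectively. I would then carry out the $x$-integration by H\"older: $\norm{\h}_{Y^{10}_v}$ and the $\der G$-factor both go to $L^2_x$, while the $\derv{''}{''}G$-factor is sent to $L^\infty_x$. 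Applying the Sobolev embedding $H^2(\R^3_x)\hookrightarrow L^\infty(\R^3_x)$ costs up to two spatial derivatives, producing a sum over multi-indices $\sigma$ with $|\sigma|\leq 2$ of $\norm{\jap{v}^{\omega_{\alpha'',\beta''}-(3+2\delta_\gamma)+\frac12}\part^\sigma_x\derv{''}{''}G}_{L^2_xL^2_v}$. Since $\omega_{\alpha''+\sigma,\beta''}\geq \omega_{\alpha'',\beta''}-(3+2\delta_\gamma)$ for any such $\sigma$, each term is dominated by the corresponding energy-norm quantity with $\alpha'''=\alpha''+\sigma$, matching the stated index range.

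Finally I would integrate in $t$, pulling $\norm{\h}_{L^\infty_tL^2_xY^{10}_v}=\norm{\h}_{\H}$ outside as the $L^\infty_t$ factor, then applying Cauchy--Schwarz in $t$ followed by Young's inequality to convert the remaining product of two $L^2_tL^2_xL^2_v$ norms into a sum of their squares, which is exactly the claimed right-hand side. The only real subtlety is the weight accounting in the splitting step: the hierarchy introduced in \ssref{hie} is calibrated so that the Sobolev cost of two $x$-derivatives equals precisely the weight deficit $3+2\delta_\gamma$ supplied by the hypothesis. The symmetric strategy of \lref{special_holder_der_leq_8}, which places the Sobolev derivatives on $\h$, is unavailable here because $\h$ already sits at top order, and this asymmetry is exactly where the hierarchy of weighted norms pays off.
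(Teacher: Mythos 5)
Your proposal is correct and follows essentially the same argument as the paper: Cauchy--Schwarz in $v$, H\"older in $x$ placing $\norm{\h}_{Y^{10}_v}$ and the $\der G$ factor in $L^2_x$ and $\derv{''}{''}G$ in $L^\infty_x$, Sobolev embedding in $x$ on the low-order $G$ factor, the weight inequality $\omega_{\alpha''',\beta''}\geq\omega_{\alpha'',\beta''}-(3+2\delta_\gamma)$, and Young's inequality. The only cosmetic difference is that you bound $\jap{v}^\gamma\leq\jap{v}$ up front, whereas the paper carries $\jap{v}^{\gamma/2}$ through the chain; both give the stated right-hand side.
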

\begin{proof}
\begin{equation*}
\begin{split}
& \int_0^T \int_{\Omega_R}\int\jap{v}^{\omega_{\alpha,\beta}+\omega_{\alpha'',\beta''}-(3+2\delta_\gamma)} |\der G|\jap{v}^{\gamma}\norm{\h}_{Y_v^{10}}|\derv{''}{''} G|\d v \d x\d t\\
&\leq \int_0^T\int_{\Omega_R} (\norm{\h}_{Y_v^{10}}\norm{\jap{v}^{\omega_{\alpha,\beta}}\jap{v}^{\frac{\gamma}{2}} \der G}_{L^2_v}\norm{\jap{v}^{\omega_{\alpha'',\beta''}-(3+2\delta_\gamma)}\jap{v}^{\frac{\gamma}{2}}\derv{''}{''} G}_{L^2_v})\\
&\leq \norm{\h}_{\H} \norm{\jap{v}^{\omega_{\alpha,\beta}}\jap{v}^{\frac{\gamma}{2}} \der G}_{L^2([0,T];L^2_xL^2_v)}\norm{\jap{v}^{\omega_{\alpha'',\beta''}-(3+2\delta_\gamma)}\jap{v}^{\frac{\gamma}{2}} \derv{''}{''} G}_{L^2([0,T];L^\infty_xL^2_v)}\\
&\lesssim \norm{\h}_{\H} \norm{\jap{v}^{\omega_{\alpha,\beta}}\jap{v}^{\frac{\gamma}{2}} \der G}_{L^2([0,T];L^2_xL^2_v)}\norm{\jap{v}^{\omega_{\alpha'',\beta''}-(3+2\delta_\gamma)}\jap{v}^{\frac{\gamma}{2}}\derv{'''}{''} G}_{L^2([0,T];L^2_xL^2_v)}.
\end{split}
\end{equation*}
Here $|\alpha'''|\leq |\alpha''|+2$. Thus $\omega_{\alpha''',\beta''}\geq \omega_{\alpha'',\beta''}-(3+2\delta_\gamma)$ which implies the following bound,
\begin{multline*}
\norm{\h}_{\H} \norm{\jap{v}^{\omega_{\alpha,\beta}}\jap{v}^{\frac{\gamma}{2}} \der G}_{L^2([0,T];L^2_xL^2_v)}\norm{\jap{v}^{\omega_{\alpha'',\beta''}-(3+2\delta_\gamma)}\jap{v}^{\frac{\gamma}{2}}\derv{'''}{''} G}_{L^2([0,T];L^2_xL^2_v)}\\
\leq \norm{\h}_{\H} \norm{\jap{v}^{\omega_{\alpha,\beta}}\jap{v}^{\frac{\gamma}{2}} \der G}_{L^2([0,T];L^2_xL^2_v)}\norm{\jap{v}^{\omega_{\alpha''',\beta''}}\jap{v}^{\frac{\gamma}{2}}\derv{'''}{''} G}_{L^2([0,T];L^2_xL^2_v)}.
\end{multline*}

Using Young's inequality we get,
\begin{multline*}
 \norm{\h}_{\H} \norm{\jap{v}^{\omega_{\alpha,\beta}}\jap{v}^{\frac{\gamma}{2}} \der G}_{L^2([0,T];L^2_vL^2_x)}\norm{\jap{v}^{\omega_{\alpha''',\beta''}}\jap{v}^{\frac{\gamma}{2}}\derv{'''}{''} G}_{L^2([0,T];L^2_xL^2_v)},\\
\leq \frac{1}{2}\norm{\h}_{\H} \norm{\jap{v}^{\omega_{\alpha,\beta}}\jap{v}^{\frac{\gamma}{2}} \der G}_{L^2([0,T];L^2_xL^2_v)}^2+\frac{1}{2}\norm{\h}_{\H}\norm{\jap{v}^{\omega_{\alpha''',\beta''}}\jap{v}^{\frac{\gamma}{2}}\derv{'''}{''} G}_{L^2([0,T];L^2_xL^2_v)}^2.
\end{multline*}
\end{proof}
\section{Bounding Error Terms}\label{s.errors}
We are now in a position to bound all the error terms.\\
Before we start estimating, we make a few notational definitions,
 \begin{equation}\label{e.ball_norm}
\norm{G}_{Y^{m,s}_{l,\Omega_R}}^2:=\sum_{i=0}^m\sum\limits_{|\alpha|+|\beta|=i}\norm{\jap{v}^{2\omega_{\alpha,\beta}}\jap{v}^s(\der G)^2 \psi_{i-l}^2}_{L^1_xL^1_v}.
\end{equation}
And
 \begin{equation}\label{e.ball_norm}
\norm{G}^2_{E^{m}_{T,\Omega_R}}:=\norm{G}^2_{L^\infty([0,T];Y^{m,0}_{0,\Omega_R})}+\int_0^T  \norm{G}^2_{Y^{m,1}_{0,\Omega_R}} \d t.
\end{equation}

Here $m$ denotes the number of derivatives we take, $s$ is the extra velocity weights needed and $l$ is the off-set in the hierarchy of cut-off. This is needed to handle the boundary error terms and $l=1$ in that case. In the case of bulk error terms, $l=0$.

We remind ourselves again that boundary error terms are $0$ when $|\alpha|+|\beta|=0$ thus with the convention that $\psi_0=1$, the above definition makes sense for $l=1$ as soon as $m\geq 1$.

\textbf{In this whole section we fix $\alpha,\beta$ such that $|\alpha|+|\beta|=m\leq 10$.}\\
\textbf{For this section, all the integrals are taken over $\Omega_R$ but the explicit dependence is dropped for the sake of brevity.}
\begin{proposition}\label{p.T1_bound}
We have the following bound on $T_1^{\alpha,\beta}$ in \eref{T1} for all $T \in [0,T_0)$,
$$T_1^{\alpha,\beta}\leq C(\gamma,d_0,m)\int_0^T \norm{G}_{Y^{m,1}_{0,\Omega_R}}^2\d t.$$
\end{proposition}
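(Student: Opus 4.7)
The plan is to reduce $T_1^{\alpha,\beta}$ to the norm $\norm{G}_{Y^{m,1}_{0,\Omega_R}}^2$ via a direct weighted Cauchy--Schwarz. The key observation is the following simple comparison between weights: if $|\alpha'|\leq |\alpha|+1$ and $|\beta'|\leq |\beta|-1$, then by the definition of $\omega_{\alpha,\beta}$ in \sref{notation},
\begin{equation*}
\omega_{\alpha',\beta'} \geq 20 - \left(\tfrac{3}{2}+\delta_\gamma\right)(|\alpha|+1) - \left(\tfrac{1}{2}+\delta_\gamma\right)(|\beta|-1) = \omega_{\alpha,\beta} - 1,
\end{equation*}
so that $2\omega_{\alpha,\beta}\leq \omega_{\alpha,\beta}+\omega_{\alpha',\beta'}+1$. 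This is exactly the slack needed to distribute the weight symmetrically between the two factors while creating one extra $\jap{v}$ on each side, which matches the definition of $Y^{m,1}_{0,\Omega_R}$.

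Given this, I would split the weight as
\begin{equation*}
\jap{v}^{2\omega_{\alpha,\beta}}\psi_m^2 \leq \left(\jap{v}^{\omega_{\alpha,\beta}+\frac{1}{2}}\psi_m\right)\left(\jap{v}^{\omega_{\alpha',\beta'}+\frac{1}{2}}\psi_m\right),
\end{equation*}
then apply Cauchy--Schwarz in $L^2_xL^2_v$ followed by Young's inequality $ab\leq \tfrac{1}{2}a^2+\tfrac{1}{2}b^2$ to obtain
\begin{equation*}
T_1^{\alpha,\beta} \lesssim \sum_{\substack{|\alpha'|\leq |\alpha|+1\\ |\beta'|\leq |\beta|-1}}\int_0^T \Bigl(\norm{\jap{v}^{\omega_{\alpha,\beta}+\frac{1}{2}}\psi_m \der G}_{L^2_xL^2_v}^2 + \norm{\jap{v}^{\omega_{\alpha',\beta'}+\frac{1}{2}}\psi_m \derv{'}{'}G}_{L^2_xL^2_v}^2\Bigr)\d t.
\end{equation*}
The first summand is manifestly part of $\norm{G}_{Y^{m,1}_{0,\Omega_R}}^2$ since $|\alpha|+|\beta|=m$. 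For the second, I would use that $|\alpha'|+|\beta'|\leq m$ together with the monotonicity $\psi_m\leq \psi_{|\alpha'|+|\beta'|}$ (which follows from the construction $\psi_k(v)=\psi(v/(R/2^k))$, whose support shrinks as $k$ grows), so that the term fits inside the sum defining $\norm{G}_{Y^{m,1}_{0,\Omega_R}}^2$ as well.

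Since the index set $\{(\alpha',\beta'):|\alpha'|\leq |\alpha|+1,\;|\beta'|\leq |\beta|-1\}$ is finite of size depending only on $m$, summing yields the stated bound with a constant $C(\gamma,d_0,m)$. There is no real obstacle here; the only point that requires care is verifying the weight comparison, which is the reason the hierarchy $\omega_{\alpha,\beta}$ was tuned so that exchanging one velocity derivative for one spatial derivative costs exactly one velocity weight, precisely matching the extra $\jap{v}$ available in the dissipative norm $Y^{m,1}_{0,\Omega_R}$.
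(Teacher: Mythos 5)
Your proof is correct and follows essentially the same route as the paper's: the key weight comparison $\omega_{\alpha',\beta'}\geq\omega_{\alpha,\beta}-1$, then Young's inequality and the monotonicity $\psi_m\leq\psi_{|\alpha'|+|\beta'|}$, with only a cosmetic difference in how the $\jap{v}$-weight is split between the two factors before applying Young.
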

\begin{proof}
We fix a particular term in $T^{\alpha,\beta}_1$, and assume $\alpha',\beta'$ satisfies the required conditions, that is $|\alpha'|\leq |\alpha'|+1$ and $|\beta'|\leq |\beta|-1$.\\
Now using Young's inequality we have,
\begin{equation*}
\begin{split}
\norm{\jap{v}^{2\omega_{\alpha,\beta}}\psi_m^2|\der G||\derv{'}{'} G|}_{L^1([0,T];L^1_xL^1_v)}&=\frac{1}{2}\norm{\jap{v}^{2\omega_{\alpha,\beta}}\psi_m^2\jap{v}(\der G)^2}_{L^1([0,T];L^1_xL^1_v)}\\
&\quad+\frac{1}{2}\norm{\jap{v}^{2\omega_{\alpha,\beta}-2}\psi_m^2\jap{v}(\derv{'}{'} G)^2}_{L^1([0,T];L^1_xL^1_v)}.
\end{split}
\end{equation*}

Now since $|\alpha'|\leq |\alpha|+1$ and $|\beta'|\leq |\beta|-1$, we thus have $$\omega_{\alpha',\beta'}=20-(\frac{3}{2}+\delta_\gamma)|\alpha'|-(\frac{1}{2}+\delta_\gamma)|\beta'|\geq 20-(\frac{3}{2}+\delta_\gamma)|\alpha|-(\frac{1}{2}+\delta_\gamma)|\beta|-1=\omega_{\alpha,\beta}-1.$$

Since we have that $|\alpha'|+|\beta'|(=i)\leq m$, we must have that $\psi_i\geq \psi_m$. Thus, summing over all such $\alpha',\beta'$, we get the required result.
\end{proof}
\begin{proposition}\label{p.T2_bound}
 We bound $T_2^{\alpha,\beta}$ in \eref{T2} for all $T\in [0,T_0)$ as follows,
$$T_2^{\alpha,\beta}\leq C(d_0,\gamma,m)\kappa\int_0^T \norm{G}_{Y^{m,0}_{0,\Omega_R}}^2.$$
\end{proposition}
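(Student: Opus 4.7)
The plan is to follow the same template as the proof of \pref{T1_bound}: apply Young's inequality to the product $|\der G|\,|\derv{}{'} G|$ and then exploit the slack in the velocity-weight hierarchy produced by the constraint $|\beta'|\leq |\beta|-1$. Since $T_2^{\alpha,\beta}$ carries no extra $\jap{v}$ factor (in contrast to $T_1$), the gain needed here is even milder, which is why the final bound is in terms of $Y^{m,0}_{0,\Omega_R}$ rather than $Y^{m,1}_{0,\Omega_R}$.

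First I would fix a single summand indexed by some $\beta'$ with $|\beta'|\leq |\beta|-1$ and apply $2|a||b|\leq a^2+b^2$ to obtain
\begin{equation*}
\kappa\norm{\jap{v}^{2\omega_{\alpha,\beta}}\psi_m^2|\der G|\,|\derv{}{'} G|}_{L^1([0,T];L^1_xL^1_v)} \leq \tfrac{\kappa}{2}\norm{\jap{v}^{2\omega_{\alpha,\beta}}\psi_m^2(\der G)^2}_{L^1([0,T];L^1_xL^1_v)} + \tfrac{\kappa}{2}\norm{\jap{v}^{2\omega_{\alpha,\beta}}\psi_m^2(\derv{}{'} G)^2}_{L^1([0,T];L^1_xL^1_v)}.
\end{equation*}
The first term on the right is directly a contribution to $\int_0^T \norm{G}_{Y^{m,0}_{0,\Omega_R}}^2\,\d t$ at the top level $m$.

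For the second term I would upgrade the weight and cut-off to those naturally attached to the multi-index $(\alpha,\beta')$. From the definition of $\omega_{\alpha,\beta}$ we have $\omega_{\alpha,\beta'}\geq \omega_{\alpha,\beta}+(\tfrac{1}{2}+\delta_\gamma)>\omega_{\alpha,\beta}$, so $\jap{v}^{2\omega_{\alpha,\beta}}\leq \jap{v}^{2\omega_{\alpha,\beta'}}$. Writing $i:=|\alpha|+|\beta'|\leq m-1$, the nested construction $\psi_k(v)=\psi(v/(R/2^k))$ gives $\psi_i\equiv 1$ on $\supp \psi_m$ (since $\supp\psi_m\subseteq\{|v|\leq 11R/(10\cdot 2^m)\}\subseteq\{|v|\leq R/2^i\}$), hence $\psi_m\leq \psi_i$ pointwise. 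Therefore the second term is controlled by a summand of $\norm{G}_{Y^{m,0}_{0,\Omega_R}}^2$ at level $i\leq m-1$.

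Finally, summing over the (finitely many, at most $C(m)$) admissible $\beta'$ and integrating in time delivers the claimed bound with constant $C(d_0,\gamma,m)\kappa$. I do not expect any genuine obstacle: this is the simplest commutator-type error in the whole scheme, strictly easier than \pref{T1_bound} because trading $\der$ for $\derv{}{'}$ already frees up $\tfrac{1}{2}+\delta_\gamma$ units of velocity weight, with nothing else to spend them on.
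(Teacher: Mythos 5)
Your proposal is correct and follows essentially the same route as the paper: a pointwise Young's inequality on the product, then the monotonicity $\omega_{\alpha,\beta'}\geq\omega_{\alpha,\beta}$ (since $|\beta'|\leq|\beta|-1$) together with $\psi_m\leq\psi_{i}$ for $i\leq m$ to fold the second term into the lower-order pieces of $\norm{G}_{Y^{m,0}_{0,\Omega_R}}^2$. You spell out the cut-off comparison a bit more explicitly than the paper does in this particular lemma, but the argument is identical in substance.
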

\begin{proof}
Again, we fix a particular term in $T^{\alpha,\beta}_2$ with $|\beta'|\leq |\beta|-1$.\\
Using Cauchy-Schwartz and then Young's inequality we have that,
\begin{equation*}
\begin{split}
\norm{\jap{v}^{2\omega_{\alpha,\beta}}\psi_m^2|\der G||\derv{}{'}G|}_{L^1([0,T];L^1_xL^1_v)}&\leq \norm{\jap{v}^{\omega_{\alpha,\beta}}\psi_m\der G}_{L^2([0,T];L^2_xL^2_v)}\norm{\jap{v}^{\omega_{\alpha,\beta}}\psi_m\derv{}{'} G}_{L^2([0,T];L^2_xL^2_v)}\\
&\leq \norm{\jap{v}^{\omega_{\alpha,\beta}}\der G \psi_m}_{L^2([0,T];L^2_xL^2_v)}^2\\
&\quad+\norm{\jap{v}^{\omega_{\alpha,\beta}}\derv{}{'} G\psi_m}_{L^2([0,T];L^2_xL^2_v)}^2.
\end{split}
\end{equation*}
Since $|\beta'|\leq |\beta|-1$ we trivially have $\omega_{\alpha,\beta'}\geq \omega_{\alpha,\beta}$.\\
Summing, we get the required result.
\end{proof}
\begin{proposition}\label{p.T3_1_bound}
For $T^{\alpha,\beta}_{3,1}$ in \eref{T3_1} and $T\in [0,T_0)$ we have the bound,
$$T^{\alpha,\beta}_{3,1}\leq C(d_0,\gamma,m)\norm{\h}_{\H}\int_0^T\norm{G}_{Y^{m,1}_{0,\Omega_R}}^2.$$
\end{proposition}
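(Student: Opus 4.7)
The plan is to estimate each term in the sum defining $T^{\alpha,\beta}_{3,1}$ by first extracting the worst-case velocity growth from $\derv{'}{'}\a[h]$ using \pref{pointwise_estimates_a} and \lref{L_inf_a}, and then invoking \lref{special_holder_der_leq_8}. The constraint $|\alpha'|+|\beta'|\leq \min\{|\alpha|+|\beta|,8\}$ in the definition of $T^{\alpha,\beta}_{3,1}$ is crucial here: it puts the derivatives on $\a[h]$ in exactly the regime where \lref{special_holder_der_leq_8} applies, letting us estimate $\a[h]$ in $L^\infty_v$ and then in $L^\infty_x$ via Sobolev embedding at the cost of two spatial derivatives.

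Fixing one configuration $(\alpha',\beta',\alpha'',\beta'',\alpha''',\beta''')$ obeying the constraints of \eref{T3_1}, I split into cases according to $|\beta'|$. By the homogeneity of $a_{ij}$ recorded in \pref{pointwise_estimates_a} combined with the $L^\infty_v$ bound from \lref{L_inf_a}, I extract $\jap{v}^{2+\gamma}\norm{\h}_{Y^{|\alpha'|+|\beta'|}_v}$ when $|\beta'|=0$ (so necessarily $|\alpha'|\geq 2$), $\jap{v}^{1+\gamma}\norm{\h}_{Y^{|\alpha'|+|\beta'|}_v}$ when $|\beta'|=1$ (so $|\alpha'|\geq 1$), and $\jap{v}^{\gamma}\norm{\h}_{Y^{|\alpha'|+|\beta'|}_v}$ when $|\beta'|\geq 2$.

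The key bookkeeping step is to bring the integrand under the hypothesis of \lref{special_holder_der_leq_8}, i.e., under $\psi_m^2\jap{v}^{\omega_{\alpha'',\beta''}+\omega_{\alpha''',\beta'''}+\gamma}|\derv{'''}{'''}G|\norm{\h}_{Y^8_v}|\derv{''}{''}G|$. Since $|\alpha'''|+|\beta'''|=|\alpha|+|\beta|$ forces $\omega_{\alpha''',\beta'''}=\omega_{\alpha,\beta}$, and the constraints in \eref{T3_1} give $|\alpha'|+|\alpha''|\leq|\alpha|$ and $|\beta'|+|\beta''|\leq|\beta|+2$, one has
\[
\omega_{\alpha'',\beta''}-\omega_{\alpha,\beta}\;\geq\;\Bigl(\tfrac{3}{2}+\delta_\gamma\Bigr)|\alpha'|+\Bigl(\tfrac{1}{2}+\delta_\gamma\Bigr)(|\beta'|-2).
\]
A quick case check shows this lower bound is respectively at least $2$, $1$, and $(|\beta'|-2)(\tfrac12+\delta_\gamma)\geq 0$ in the three cases, which is precisely what is needed to absorb $1+\gamma$, $\gamma$, and $\gamma-1$. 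Note the configuration $|\beta'|=0,|\alpha'|=2$ is tight, which is exactly why the spatial penalty in the hierarchy must be at least $\tfrac{3}{2}+\delta_\gamma$.

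Once the weight arithmetic is in place, the remainder is mechanical: I use $\norm{\h}_{Y^{|\alpha'|+|\beta'|}_v}\leq \norm{\h}_{Y^8_v}$ and apply \lref{special_holder_der_leq_8} to obtain a bound by $\norm{\h}_{\H}$ times the sum of $\norm{\jap{v}^{\omega_{\alpha'',\beta''}+1/2}\derv{''}{''}G}_{L^2([0,T];L^2_xL^2_v)}^2$ and $\norm{\jap{v}^{\omega_{\alpha''',\beta'''}+1/2}\derv{'''}{'''}G}_{L^2([0,T];L^2_xL^2_v)}^2$. Using the cut-off monotonicity $\psi_m\leq \psi_i$ for $i\leq m$ (both $|\alpha''|+|\beta''|,|\alpha'''|+|\beta'''|\leq m$), these two squared norms fold into $\int_0^T\norm{G}^2_{Y^{m,1}_{0,\Omega_R}}dt$. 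Summing over the finitely many multi-index configurations yields the claim. The main obstacle is the case-by-case weight bookkeeping just outlined, which is genuinely tight in one case; after that, everything reduces directly to \lref{special_holder_der_leq_8}.
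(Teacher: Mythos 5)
Your overall strategy is the same as the paper's (split on $|\beta'|$, extract the pointwise velocity growth via \pref{pointwise_estimates_a}/\lref{L_inf_a}, check the weight bookkeeping, apply \lref{special_holder_der_leq_8}), and your final weight inequality is correct. However, two of your intermediate assertions are individually false.

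You claim that $|\alpha'''|+|\beta'''|=|\alpha|+|\beta|$ ``forces $\omega_{\alpha''',\beta'''}=\omega_{\alpha,\beta}$'' and that the constraints in \eref{T3_1} give $|\alpha'|+|\alpha''|\leq|\alpha|$. Neither holds in general: the hierarchy $\omega_{\alpha,\beta}$ is \emph{asymmetric} in $|\alpha|$ and $|\beta|$, so equality of $|\alpha'''|+|\beta'''|$ with $|\alpha|+|\beta|$ does not pin down $\omega_{\alpha''',\beta'''}$ when $|\alpha'''|\neq|\alpha|$; and the constraints only give $|\alpha'|+|\alpha''|+|\alpha'''|\leq 2|\alpha|$, not $|\alpha'|+|\alpha''|\leq|\alpha|$. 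These configurations with $|\alpha'''|\neq|\alpha|$ actually occur: after the integration by parts used to generate $T^{\alpha,\beta}_{3,1}$ (e.g.\ the contribution \eref{x_der_a_1}, where one $\part_x$ is traded for a $\part_v$ on both $G$ factors), one has $|\alpha'''|=|\alpha|-1$, $|\beta'''|=|\beta|+1$, so $\omega_{\alpha''',\beta'''}=\omega_{\alpha,\beta}+1$, while $|\alpha'|+|\alpha''|=|\alpha|+1$. Your two mistakes happen to cancel exactly, so that your stated lower bound for the \emph{sum} $\omega_{\alpha'',\beta''}+\omega_{\alpha''',\beta'''}$ is the right one; but as written the argument does not prove it. The robust way, which is what the paper does, is to bound the sum directly: from $|\alpha'''|+|\alpha''|\leq 2|\alpha|-|\alpha'|$ and $|\beta'''|+|\beta''|\leq 2|\beta|+2-|\beta'|$ one gets
\[
\omega_{\alpha''',\beta'''}+\omega_{\alpha'',\beta''}\;\geq\;2\omega_{\alpha,\beta}+\Bigl(\tfrac{3}{2}+\delta_\gamma\Bigr)|\alpha'|+\Bigl(\tfrac{1}{2}+\delta_\gamma\Bigr)|\beta'|-(1+2\delta_\gamma),
\]
from which your three case checks follow. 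Finally, your aside about absorbing ``$1+\gamma$, $\gamma$, $\gamma-1$'' is off: after \lref{special_holder_der_leq_8} handles the common $\jap{v}^\gamma$ factor, the surplus the hierarchy must furnish is exactly $2$, $1$, $0$ in the three cases (and your displayed lower bounds do supply this). Also note that the $|\beta'|=1$, $|\alpha'|=1$ configuration is just as tight as $|\beta'|=0$, $|\alpha'|=2$.
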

\begin{proof}
We fix a typical term such that $\alpha',\beta',\alpha'',\beta'',\alpha''',\beta'''$ satisfy the required conditions.

In this case we have less than $8$ derivatives hitting $\a[h]$ and thus we estimate it in $L^\infty_x$ and then use Sobolev embedding at the cost of two derivatives in $x$. Thus the idea is to reduce to a point when we can use \lref{special_holder_der_leq_8} to deduce the required lemma.
\begin{enumerate}
\item \emph{Case 1:} $|\beta'|\geq 2$.\\
We bound $\norm{\jap{v}^{2\omega_{\alpha,\beta}}|\derv{'''}{'''} G||\derv{'}{'}\a[h]||\derv{''}{''}G|}_{L^1([0,T];L^1_xL^1_v)}$ using \eref{L_inf_a_2_der}.
\begin{multline*}
\int_0^T \int \int \jap{v}^{2\omega_{\alpha,\beta}} |\derv{'''}{'''} G||\derv{'}{'} \a[h]||\derv{''}{''} G|\d v \d x \d t\\
\leq \int_0^T \int\int \jap{v}^{2\omega_{\alpha,\beta}} |\derv{'''}{'''} G|\jap{v}^{\gamma}\norm{\h}_{Y^8_v}|\derv{''}{''} G|\d v \d x\d t.
\end{multline*}

Since we have $|\alpha'''|+|\alpha''|+|\alpha'|\leq 2|\alpha|$ and $|\beta'''|+|\beta''|+|\beta'|\leq 2|\beta|+2$ we get that,
\begin{equation*}
\begin{aligned}
\omega_{\alpha''',\beta'''}+\omega_{\alpha'',\beta''}&=40-(\frac{3}{2}+\delta_\gamma)(|\alpha'''|+|\alpha''|)-(\frac{1}{2}+\delta_\gamma)(|\beta'''|+|\beta''|)\\
&\geq 40-(3+2\delta_\gamma)|\alpha|-(1+2\delta_\gamma)|\beta|+\frac{(3+2\delta_\gamma)|\alpha'|+(1+2\delta_\gamma)|\beta'|}{2}-1-2\delta_\gamma.
\end{aligned}
\end{equation*}
In this case $|\beta'|\geq 2$ thus, we get that $$\omega_{\alpha''',\beta'''}+\omega_{\alpha'',\beta''}\geq 2\omega_{\alpha,\beta}.$$

\item \emph{Case 2}: $|\beta'|=1$.\\
In this case we have that $|\alpha'|\geq 1$. Proceeding as above and using \eref{L_inf_a_1_der} we get
\begin{multline*}
\int_0^T \int \int \jap{v}^{2\omega_{\alpha,\beta}} |\derv{'''}{'''} G||\derv{'}{'} \a[h]||\derv{''}{''} G|\d v \d x \d t\\ \lesssim  \int_0^T\int\int \jap{v}^{2\omega_{\alpha,\beta}+1}|\derv{'''}{'''} G|\jap{v}^{\gamma}\norm{\h}_{Y^7_v}|\derv{''}{''} G|\d v \d x\d t.
\end{multline*}
Now since $|\alpha'|\geq 1$, we have by above computations, that $$\omega_{\alpha''',\beta'''}+\omega_{\alpha'',\beta''}\geq 2\omega_{\alpha,\beta}+\frac{(3+2\delta_\gamma)|\alpha'|+(1+2\delta_\gamma)|\beta'|}{2}-1-2\delta_\gamma\geq 2\omega_{\alpha,\beta}+1.$$

Again putting things together we get an equation that can be handled by \lref{special_holder_der_leq_8}.

\item \emph{Case 3:} $|\alpha'|\geq 2$.\\
For this case we use \eref{L_inf_a} to get,
\begin{multline*}
\int_0^T \int \int \jap{v}^{2\omega_{\alpha,\beta}} |\derv{'''}{'''} G||\derv{'}{'} \a[h]||\derv{''}{''} G|\d v \d x \d t\\ \lesssim  \int_0^T\int\int \jap{v}^{2\omega_{\alpha,\beta}+2}|\derv{'''}{'''} G|\norm{\h}_{Y^8_v}\jap{v}^{\gamma}|\derv{''}{''} G|\d v \d x\d t.
\end{multline*}
Since $|\alpha'|\geq 2$, we have $\omega_{\alpha''',\beta'''}+\omega_{\alpha'',\beta''}\geq 2\omega_{\alpha,\beta}+2$.\\
Hence we again get the equation of the desired form.\\
\end{enumerate}

Thus in each case we get a bound of the form,
\begin{multline*}
\int_0^T \int \int \jap{v}^{2\omega_{\alpha,\beta}} |\derv{'''}{'''} G||\derv{'}{'} \a[h]||\derv{''}{''} G|\d v \d x \d t\\
\leq \int_0^T \int\int \jap{v}^{\omega_{\alpha''',\beta'''}+\omega_{\alpha'',\beta''}} |\derv{'''}{'''} G|\jap{v}^{\gamma}\norm{\h}_{Y_v^{8}}|\derv{''}{''} G|\d v \d x\d t.
\end{multline*}
Hence we can apply \lref{special_holder_der_leq_8} as summing over the various indices after noting that $|\alpha''|+|\beta''|\leq m$ and also $|\alpha'''|+|\beta'''|=m$, to get the result of the lemma.
\end{proof}
\begin{proposition}\label{p.T3_2_bound}
For $T^{\alpha,\beta}_{3,2}$ as in \eref{T3_2} and $T\in [0,T_0)$ we have the following estimate,
$$T^{\alpha,\beta}_{3,2}\leq C(d_0,\gamma,m) \norm{\h}_{\H} \int_0^T\norm{G}_{Y^{m,1}_{0,\Omega_R}}^2.$$
\end{proposition}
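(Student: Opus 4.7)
The plan is to mirror the proof of \pref{T3_1_bound} but invoke the complementary estimate \lref{special_holder_der_geq_9}, which is tailored to the regime where $\a[h]$ absorbs most of the derivatives. Since $|\alpha'|+|\beta'|\geq 9$ and $|\alpha|+|\beta|=m\leq 10$, we have $|\alpha''|+|\beta''|\leq 1$, so $\part^2_{v_iv_j}\derv{''}{''}G$ carries at most three derivatives overall. Consequently one should estimate $\h$ in $L^\infty([0,T];L^2_xY^{10}_v)$, keep the high-derivative factor $\der G$ in $L^2_xL^2_v$, and move the low-derivative factor $\part^2_{v_iv_j}\derv{''}{''}G$ from $L^\infty_xL^2_v$ down to $L^2_xL^2_v$ at the cost of two spatial derivatives via Sobolev embedding.

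I would fix an admissible quadruple $(\alpha',\beta',\alpha'',\beta'')$ and split on the number of velocity derivatives falling on $\a[h]$. For $|\beta'|\geq 2$, \eref{L_inf_a_2_der} furnishes $\norm{\jap{v}^{-\gamma}\derv{'}{'}\a[h]}_{L^\infty_v}\lesssim\norm{\h}_{Y^{|\alpha'|+|\beta'|}_v}$; for $|\beta'|=1$, \eref{L_inf_a_1_der} yields the same bound with an extra $\jap{v}$; and for $|\beta'|=0$, \eref{L_inf_a} yields an extra $\jap{v}^2$. Because $|\alpha'|+|\beta'|\leq 10$, in each case $\norm{\h}_{Y^{|\alpha'|+|\beta'|}_v}\leq\norm{\h}_{Y^{10}_v}$. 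Pulling this factor out reduces the task to bounding
\begin{equation*}
\int_0^T\int\int\jap{v}^{2\omega_{\alpha,\beta}+s}|\der G|\,\norm{\h}_{Y^{10}_v}\,|\part^2_{v_iv_j}\derv{''}{''}G|\,\d v\,\d x\,\d t,
\end{equation*}
with $s\in\{\gamma,\,1+\gamma,\,2+\gamma\}$ according to the case.

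To apply \lref{special_holder_der_geq_9} with the low-derivative factor taken as $\part^{\alpha''}_x\part^{\tilde\beta''}_v G$ for a $\tilde\beta''$ of length $|\beta''|+2$ (note $|\alpha''|+|\tilde\beta''|\leq 3\leq 8$), the required weight inequality is
\begin{equation*}
2\omega_{\alpha,\beta}+s\,\leq\,\omega_{\alpha,\beta}+\omega_{\alpha'',\tilde\beta''}-(3+2\delta_\gamma)+\gamma.
\end{equation*}
Using $\omega_{\alpha'',\tilde\beta''}=\omega_{\alpha'',\beta''}-(1+2\delta_\gamma)$ and $\omega_{\alpha'',\beta''}-\omega_{\alpha,\beta}=(\tfrac{3}{2}+\delta_\gamma)|\alpha'|+(\tfrac{1}{2}+\delta_\gamma)|\beta'|$, this reduces to
\begin{equation*}
(\tfrac{3}{2}+\delta_\gamma)|\alpha'|+(\tfrac{1}{2}+\delta_\gamma)|\beta'|\,\geq\,(s-\gamma)+4+4\delta_\gamma.
\end{equation*}
The main technical point, and what I expect to be the only nonroutine step, is verifying this inequality case by case: for $|\beta'|\geq 2$ it follows from $|\alpha'|+|\beta'|\geq 9$; for $|\beta'|=1$ the constraint $|\alpha'|\geq 8$ pays for the extra $\jap{v}$; and for $|\beta'|=0$ the constraint $|\alpha'|\geq 9$ pays for the extra $\jap{v}^2$. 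In every case there is room to spare. Once this weight bookkeeping is complete, \lref{special_holder_der_geq_9} bounds the integral by $\norm{\h}_{\H}$ times squared $X^m_T$-type norms of $G$, which assemble into $\int_0^T\norm{G}_{Y^{m,1}_{0,\Omega_R}}^2\,\d t$, and summing over the finitely many admissible multi-indices yields the proposition.
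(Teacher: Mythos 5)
Your proof is correct and takes essentially the same approach as the paper: you split on $|\beta'|$, invoke the same $L^\infty_v$ coefficient bounds \eref{L_inf_a_2_der}, \eref{L_inf_a_1_der}, \eref{L_inf_a}, reduce to the hypotheses of \lref{special_holder_der_geq_9}, and check the weight inequality case by case using $|\alpha'|+|\beta'|\geq 9$. Your unified reformulation $(\tfrac{3}{2}+\delta_\gamma)|\alpha'|+(\tfrac{1}{2}+\delta_\gamma)|\beta'|\geq (s-\gamma)+4+4\delta_\gamma$ is just a cleaner packaging of the three separate weight inequalities the paper verifies, and all the arithmetic checks out.
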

\begin{proof}
Since we have more than $8$ derivatives hitting $\a[h]$, we  can no longer estimate it in $L^\infty_x$ and we have to estimate it necessarily in $L^2_x$. On the other hand the term $\derv{''}{''} G$ has atmost 4 derivatives hitting it and so we estimate it in $L^\infty_x$ and use Sobolev embedding at the cost of two derivatives. Hence this is where \lref{special_holder_der_geq_9} comes handy.

\begin{enumerate}
\item \emph{Case 1:} $|\beta'|\geq 2$\\
Using \eref{L_inf_a_2_der} we have
\begin{multline*}
\int_0^T \int\int \jap{v}^{2\omega_{\alpha,\beta}} |\der G||\derv{'}{'} \a[h]||\part^2_{v_iv_j}\derv{''}{''} G|\d v \d x \d t\\
\leq \int_0^T \int\int\jap{v}^{2\omega_{\alpha,\beta}} |\der G|\jap{v}^{\gamma}\norm{\h}_{Y_v^{10}}|\derv{''}{'''} G|\d v \d x\d t.
\end{multline*}
Here $|\beta'''|=|\beta''|+2$.

To be able to use \lref{special_holder_der_geq_9} we need to prove that $\omega_{\alpha,\beta}\leq \omega_{\alpha'',\beta'''}-3-2\delta_\gamma$.

Now note that since,
\begin{equation*}
\begin{split}
\omega_{\alpha'',\beta'''}&=20-(\frac{3}{2}+\delta_\gamma)|\alpha''|-(\frac{1}{2}+\delta_\gamma)|\beta'''|\\
&\geq 20-(\frac{3}{2}+\delta_\gamma)|\alpha''|-(\frac{1}{2}+\delta_\gamma)|\beta''|-1-2\delta_\gamma.
\end{split}
\end{equation*}
But,
\begin{equation*}
\begin{split}
\omega_{\alpha'',\beta''}&=20-(\frac{3}{2}+\delta_\gamma)|\alpha''|-(\frac{1}{2}+\delta_\gamma)|\beta''|\\
&=20-(\frac{3}{2}+\delta_\gamma)|\alpha|-(\frac{1}{2}+\delta_\gamma)|\beta|+\frac{(3+2\delta_\gamma)|\alpha'|+(1+2\delta_\gamma)|\beta'|}{2}.
\end{split}
\end{equation*}

Putting this together we get $$\omega_{\alpha'',\beta'''}\geq \omega_{\alpha,\beta}+(3+2\delta_\gamma)\frac{|\alpha'|+|\beta'|}{2}-|\beta'|-1-2\delta_\gamma.$$
Now when $|\alpha'|+|\beta'|= k$, we trivially have $|\beta'|\leq k$, which implies for $9 \leq k\leq 10$.
\begin{equation*}
\begin{split}
\omega_{\alpha'',\beta'''}&\geq \omega_{\alpha,\beta}+\frac{(3+2\delta_\gamma) k}{2}-k-1-2\delta_\gamma\\
&=\omega_{\alpha,\beta}+\frac{k}{2}-1+\delta_\gamma(k-2)\\
&\geq \omega_{\alpha,\beta}+3+2\delta_\gamma.
\end{split}
\end{equation*}

\item \emph{Case 2:} $|\beta'|=1$.\\
In this case we necessarily have $|\alpha'|\geq 8$.\\
Proceeding as in Case 1, and using \eref{L_inf_a_1_der} we get the following bound,
\begin{multline*}
\int_0^T \int \int \jap{v}^{2\omega_{\alpha,\beta}} |\der G||\derv{'}{'} \a[h]||\part^2_{v_iv_j}\derv{''}{''} G|\d v \d x \d t\\
\leq \int_0^T \int\int\jap{v}^{2\omega_{\alpha,\beta}+1} |\der G|\jap{v}^{\gamma}\norm{\h}_{Y_v^{10}}|\derv{''}{'''} G|\d v \d x\d t.
\end{multline*}
Where again, $|\beta'''|=|\beta''|+2$.

This time we need to show $\omega_{\alpha,\beta}+4+2\delta_\gamma\leq \omega_{\alpha'',\beta''}$ (to take care of the extra $\jap{v}$ power).\\
As before  $$\omega_{\alpha'',\beta'''}\geq \omega_{\alpha,\beta}+(3+2\delta_\gamma)\frac{|\alpha'|+|\beta'|}{2}-|\beta'|-1-2\delta_\gamma.$$

But this time since $|\beta'|=1$, $|\alpha'|\geq 8$, thus $$(3+2\delta_\gamma)\frac{|\alpha'|+|\beta'|}{2}-|\beta'|-1-2\delta_\gamma>4+2\delta_\gamma.$$
Hence $\omega_{\alpha,\beta}+4+2\delta_\gamma\leq \omega_{\alpha'',\beta'''}.$

\item \emph{Case 3:} $|\beta'|=0$.\\
In this case we necessarily have $|\alpha'|\geq 9$.\\
Using \eref{L_inf_a}, we get,
\begin{multline*}
\int_0^T \int \int \jap{v}^{2\omega_{\alpha,\beta}} |\der G||\derv{'}{'} \a[h]||\part^2_{v_iv_j}\derv{''}{''} G|\d v \d x \d t\\
\leq \int_0^T \int\int\jap{v}^{2\omega_{\alpha,\beta}+2} |\der G|\jap{v}^{\gamma}\norm{\h}_{Y_v^{10}}|\derv{''}{'''} G|\d v \d x\d t.
\end{multline*}
In this case we need to show $\omega_{\alpha,\beta}+5+2\delta_\gamma\leq \omega_{\alpha'',\beta'''}$.

Again, $$\omega_{\alpha'',\beta'''}\geq \omega_{\alpha,\beta}+(3+2\delta_\gamma)\frac{|\alpha'|+|\beta'|}{2}-|\beta'|-1-2\delta_\gamma.$$

Since $|\beta'|=0$, $(3+2\delta_\gamma)\frac{|\alpha'|+|\beta'|}{2}-1-2\delta_\gamma\geq \frac{27}{2}-1+2\delta_\gamma>5+2\delta_\gamma$.

Hence $\omega_{\alpha,\beta}+5+2\delta_\gamma\leq \omega_{\alpha'',\beta'''}$.
\end{enumerate}
In each case we proved a bound of the form ,
\begin{multline*}
\int_0^T \int \int \jap{v}^{2\omega_{\alpha,\beta}} |\der G||\derv{'}{'} \a[h]||\part^2_{v_iv_j}\derv{''}{''} G|\d v \d x \d t\\
\leq \int_0^T \int\int\jap{v}^{\omega_{\alpha,\beta}+\omega_{\alpha'',\beta'''}-(3+2\delta_\gamma)} |\der G|\jap{v}^{\gamma}\norm{\h}_{Y_v^{10}}|\derv{''}{'''} G|\d v \d x\d t.
\end{multline*}
Thus applying \lref{special_holder_der_geq_9} and summing gives us the required estimate.
\end{proof}
\begin{remark}
In case 1 of \pref{T3_2_bound}, since we have two $\part_v$ hitting $\a[h]$, we can actually apply Sobolev embedding on this term itself but this is no longer true when $|\beta'|<2$.
\end{remark}
\begin{proposition}\label{l.T3_3_bound}
For $T^{\alpha,\beta}_{3,3}$ as in \eref{T3_3} and $T\in [0,T_0)$ we have the following estimate,
$$T^{\alpha,\beta}_{3,3}\leq \norm{\h}_{\H}C(d_0,\gamma,m) \int_0^T\norm{G}_{Y^{m,1}_{0,\Omega_R}}^2.$$
\end{proposition}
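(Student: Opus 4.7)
My plan is to follow the same strategy used in \pref{T3_1_bound}: bound $|\derv{'}{'}\a[h]|$ pointwise via \lref{L_inf_a}, then apply \lref{special_holder_der_leq_8} to the resulting product. Since $|\alpha'|+|\beta'|=1$, we are squarely in the regime where at most $\norm{\h}_{Y^1_v}$ arises (well within the $\norm{\h}_{Y^8_v}$ threshold of \lref{special_holder_der_leq_8}), and $\a[h]$ admits a pointwise $L^\infty_v$ bound that can be pulled out of the $v$-integral.

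The analysis splits into two cases. In Case A ($|\alpha'|=1$, $|\beta'|=0$), \eref{L_inf_a} gives $|\partial^{\alpha'}_x\a[h]|\lesssim \jap{v}^{2+\gamma}\norm{\h}_{Y^1_v}$; in Case B ($|\alpha'|=0$, $|\beta'|=1$), \eref{L_inf_a_1_der} gives $|\partial^{\beta'}_v\a[h]|\lesssim \jap{v}^{1+\gamma}\norm{\h}_{Y^1_v}$. In either case I absorb the extra $\partial_{v_l}$ acting on $\derv{''}{''}G$ into a redefined velocity multi-index $\tilde\beta := \beta''+e_l$, setting $\tilde\alpha := \alpha''$, so that the second factor becomes $\partial_x^{\tilde\alpha}\partial_v^{\tilde\beta}G$ of total order exactly $m$. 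A direct computation yields $\omega_{\tilde\alpha,\tilde\beta}=\omega_{\alpha,\beta}+1$ in Case A and $\omega_{\tilde\alpha,\tilde\beta}=\omega_{\alpha,\beta}$ in Case B, so in both cases $\omega_{\alpha,\beta}+\omega_{\tilde\alpha,\tilde\beta}$ is exactly enough to absorb the integrand weight $\jap{v}^{2\omega_{\alpha,\beta}-1}$ combined with the pointwise weight on $\a[h]$, leaving only a $\jap{v}^\gamma$ factor.

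This puts the expression in the exact form of \lref{special_holder_der_leq_8}, which I apply with $(\alpha''',\beta''')=(\alpha,\beta)$ and $(\alpha'',\beta'')=(\tilde\alpha,\tilde\beta)$. The cut-off $\psi_m^2$ present in $T^{\alpha,\beta}_{3,3}$ is carried through the Cauchy--Schwarz/Young argument unchanged, and since both output $G$-factors carry $m$ derivatives, the resulting $\psi_m^2$ matches the $\psi_{|\alpha|+|\beta|}^2$ appearing in the definition of $\norm{G}^2_{Y^{m,1}_{0,\Omega_R}}$. Summing over the admissible splittings of $(\alpha',\beta')$ and $(\alpha'',\beta'')$ concludes the proof. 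There is no substantive new obstacle: the ``$-1$'' in $\jap{v}^{2\omega_{\alpha,\beta}-1}$ and the asymmetry between $(\tfrac{3}{2}+\delta_\gamma)$ and $(\tfrac{1}{2}+\delta_\gamma)$ in the hierarchy were tuned precisely so these weight identities hold on the nose, and the proof reduces to careful bookkeeping.
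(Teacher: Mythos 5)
Your proposal is correct and follows essentially the same route as the paper: a two-case split on whether the single derivative hitting $\a[h]$ is spatial or velocity, pointwise coefficient bounds from \lref{L_inf_a}/\eref{L_inf_a_1_der}, absorption of the extra $\partial_{v_l}$ into the multi-index of the second $G$-factor, and a weight check (yielding $\omega_{\alpha'',\beta'''}=\omega_{\alpha,\beta}+1$ or $\omega_{\alpha,\beta}$) that sets things up for \lref{special_holder_der_leq_8}. The arithmetic and the final application of the lemma match the paper's proof.
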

\begin{proof}
Each term in $T^{\alpha,\beta}_{3,3}$ only has one derivative hitting $\a[h]$, but we have one less $\jap{v}$ weight to worry about. Let $\alpha',\alpha'',\beta'$ and $\beta''$ satisfy the required conditions.

The idea is the same as that of \pref{T3_1_bound}, and we bound $\derv{'}{'} \a[h]$ in $L^\infty_x$ and then use Sobolev embedding in space.\\
We handle the two cases, when $|\alpha'|=1$ and when $|\beta'|=1$, differently.
\begin{enumerate}
\item \emph{Case 1:} $|\beta'|=1$ (or $|\alpha'|=0$).\\
In this case we proceed as in Case 2 of \pref{T3_1_bound} to get the following bound
\begin{multline*}
\int_0^T \int \int \jap{v}^{2\omega_{\alpha,\beta}-1} |\der G||\derv{'}{'} \a[h]||\part_{v_l}\derv{''}{''} G|\d v \d x \d t\\
\leq \int_0^T \int\int\jap{v}^{2\omega_{\alpha,\beta}} |\der G|\jap{v}^{\gamma}\norm{\h}_{Y_v^{10}}|\derv{''}{'''} G|\d v \d x\d t.
\end{multline*}
Here $|\beta'''|=|\beta''|+1$.\\
We would like to use \lref{special_holder_der_leq_8} but we need to make sure that $\omega_{\alpha'',\beta'''}\geq \omega_{\alpha,\beta}$.\\
Indeed,
$$\omega_{\alpha'',\beta'''}=20-(\frac{3}{2}+\delta_\gamma)|\alpha''|-(\frac{1}{2}+\delta_\gamma)|\beta'''|=\omega_{\alpha'',\beta''}-\frac{1}{2}-\delta_\gamma.$$
But, $$\omega_{\alpha'',\beta''}=\omega_{\alpha,\beta}+\frac{(3+2\delta_\gamma)|\alpha'|+(1+2\delta_\gamma)|\beta'|}{2}=\omega_{\alpha,\beta}+\frac{1}{2}+\delta_\gamma.$$

Thus $\omega_{\alpha'',\beta'''}=\omega_{\alpha,\beta}$.

\item \emph{Case 2:} $|\alpha'|=1$( thus $\beta'=0$).\\
Using \eref{pw_bound_a}, we get the following estimate,
\begin{multline*}
\int_0^T \int\int \jap{v}^{2\omega_{\alpha,\beta}-1} |\der G||\derv{'}{'} \a[h]||\part_{v_l}\derv{''}{''} G|\d v \d x \d t\\
\leq \int_0^T \int\int\jap{v}^{2\omega_{\alpha,\beta}+1} |\der G|\jap{v}^{\gamma}\norm{\h}_{Y_v^{10}}|\derv{''}{'''} G|\d v \d x\d t.
\end{multline*}
Again $|\beta'''|=|\beta''|+1$.\\
To use \lref{special_holder_der_leq_8} we need to prove that $\omega_{\alpha'',\beta'''}\geq \omega_{\alpha,\beta}+1$.

Since $|\alpha'|=1$,we have as above, $$\omega_{\alpha'',\beta'''}=\omega_{\alpha'',\beta''}-\frac{1}{2}-\delta_\gamma=\omega_{\alpha,\beta}+\frac{3}{2}+\delta_\gamma-\frac{1}{2}-\delta_\gamma.$$
Thus $\omega_{\alpha'',\beta'''}=\omega_{\alpha,\beta}+1$.\\
\end{enumerate}
In both cases, we showed a bound which allows us to use \lref{special_holder_der_leq_8} to give us the required estimate.
\end{proof}
\begin{proposition}\label{p.T4_bound}
For $T^{\alpha,\beta}_{4}$ as in \eref{T4} and $T\in [0,T_0)$ we have the following estimate
$$T^{\alpha,\beta}_{4}\leq  \norm{\h}_{\H}C(d_0,\gamma,m) \int_0^T\norm{G}_{Y^{m,1}_{0,\Omega_R}}^2.$$
\end{proposition}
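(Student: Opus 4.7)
The approach mirrors the proofs of \pref{T3_1_bound} and \pref{T3_2_bound}, but the analysis is considerably simpler because $T^{\alpha,\beta}_4$ involves only two factors of $G$ rather than three, and because $\cm[h]$ grows only like $\jap{v}^\gamma$, as opposed to the $\jap{v}^{2+\gamma}$ growth of $\a[h]$. I fix a tuple of multi-indices $(\alpha',\beta',\alpha'',\beta'')$ satisfying the summation constraints $|\alpha'|+|\alpha''|\leq |\alpha|$ and $|\beta'|+|\beta''|\leq |\beta|$, and apply \lref{L_inf_c} to bound
\[
|\derv{'}{'}\cm[h]|(t,x,v)\lesssim \jap{v}^\gamma \norm{\h}_{Y^{|\alpha'|+|\beta'|}_v}(t,x)
\]
pointwise in $v$. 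This puts each summand of $T^{\alpha,\beta}_4$ into the shape $\jap{v}^{2\omega_{\alpha,\beta}}\psi_m^2|\der G|\jap{v}^\gamma \norm{\h}_{Y^{|\alpha'|+|\beta'|}_v}|\derv{''}{''}G|$, which is precisely the form handled by \lref{special_holder_der_leq_8} and \lref{special_holder_der_geq_9}.

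I then split into two cases by $|\alpha'|+|\beta'|$, exactly as in \pref{T3_1_bound} and \pref{T3_2_bound}. In the sub-case $|\alpha'|+|\beta'|\leq 8$, bound $\norm{\h}_{Y^{|\alpha'|+|\beta'|}_v}\leq \norm{\h}_{Y^8_v}$ and invoke \lref{special_holder_der_leq_8} with $(\alpha''',\beta''')=(\alpha,\beta)$; the weight hypothesis reduces to $\omega_{\alpha'',\beta''}\geq \omega_{\alpha,\beta}$, which is immediate from $|\alpha''|\leq |\alpha|$ and $|\beta''|\leq |\beta|$. In the sub-case $|\alpha'|+|\beta'|\geq 9$, one has $|\alpha''|+|\beta''|\leq 1$, which forces putting $\derv{''}{''}G$ into $L^\infty_x$ via Sobolev embedding; this is precisely \lref{special_holder_der_geq_9}. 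The resulting effective multi-index satisfies $|\alpha'''|\leq |\alpha''|+2\leq 3$, so $|\alpha'''|+|\beta''|\leq m$ and the output norm is controlled by $\norm{G}^2_{Y^{m,1}_{0,\Omega_R}}$.

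The only nontrivial verification is the weight inequality required by \lref{special_holder_der_geq_9}, namely $\omega_{\alpha,\beta}+3+2\delta_\gamma \leq \omega_{\alpha'',\beta''}$. Using $|\alpha'|\leq |\alpha|-|\alpha''|$ and $|\beta'|\leq |\beta|-|\beta''|$, this reduces to
\[
\left(\tfrac{3}{2}+\delta_\gamma\right)|\alpha'|+\left(\tfrac{1}{2}+\delta_\gamma\right)|\beta'|\geq 3+2\delta_\gamma.
\]
The worst distribution is $|\alpha'|=0$, $|\beta'|=9$, for which the left side equals $\tfrac{9}{2}+9\delta_\gamma$, which exceeds $3+2\delta_\gamma$ for every $\delta_\gamma\geq 0$; every other split of nine-or-more derivatives makes the inequality strictly easier. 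Summing over all admissible tuples and using $\psi_i\geq \psi_m$ for $i\leq m$ produces the claimed estimate.

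The main obstacle is strictly the bookkeeping of these weight inequalities across the case split; no new analytic input beyond what was already deployed for $T^{\alpha,\beta}_{3,1}$ and $T^{\alpha,\beta}_{3,2}$ is needed. If anything, the gentler $\jap{v}^\gamma$ growth of $\cm[h]$ provides slack that makes each comparison easier than its analogue for $\a[h]$.
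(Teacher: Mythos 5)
Your proof is correct and follows the same strategy the paper intends: invoke \lref{L_inf_c} for the $\jap{v}^\gamma$ growth of $\derv{'}{'}\cm[h]$, then reduce to \lref{special_holder_der_leq_8} or \lref{special_holder_der_geq_9} depending on how many derivatives land on $\cm[h]$. The weight bookkeeping you carry out (in particular $\omega_{\alpha'',\beta''}-\omega_{\alpha,\beta}\geq (\tfrac{3}{2}+\delta_\gamma)|\alpha'|+(\tfrac{1}{2}+\delta_\gamma)|\beta'|\geq 3+2\delta_\gamma$ when $|\alpha'|+|\beta'|\geq 9$, with the worst split $|\alpha'|=0,\ |\beta'|=9$) is correct, as is the observation that $|\alpha''|+|\beta''|\leq 1$ in that regime, so Sobolev embedding stays within the derivative budget.

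One thing worth noting: the paper's own proof of this proposition is a single sentence that refers the reader to \emph{Case 1} of \pref{T3_1_bound}. That case, and indeed all of $T^{\alpha,\beta}_{3,1}$, presumes at most $8$ derivatives fall on the coefficient, which is what licenses the $L^\infty_x$ placement of $\norm{\h}_{Y^8_v}$ in \lref{special_holder_der_leq_8}. In $T^{\alpha,\beta}_4$, however, $|\alpha'|+|\beta'|$ can reach $9$ or $10$ once $m\geq 9$, and in that regime \lref{special_holder_der_leq_8} does not apply since you would need $\norm{\h}_{Y^{10}_v}$, not $\norm{\h}_{Y^8_v}$. Your explicit case split and appeal to \lref{special_holder_der_geq_9} for $|\alpha'|+|\beta'|\geq 9$ is precisely the additional argument needed; it is left implicit in the paper. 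So your write-up is actually more complete than the paper's one-liner while using exactly the same ingredients, and the remark that $\cm$ grows only like $\jap{v}^\gamma$ (hence the weight inequalities hold with slack regardless of the $|\beta'|$-vs-$|\alpha'|$ split, without the sub-case analysis that Term~3 requires) is the right simplification to flag.
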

\begin{proof}
Using \lref{L_inf_c} we see that the proof is the same as the first case of \pref{T3_1_bound}.
\end{proof}
\begin{proposition}\label{p.T5_1_bound}
For $T^{\alpha,\beta}_{5,1}$ as in \eref{T5_1} and $T\in [0,T_0)$ we have the following estimate,
$$T^{\alpha,\beta}_{5,1}\leq (\norm{\h}_{\H}+\eps)C(d_0,\gamma,m) \int_0^T\norm{G}_{Y^{m,1}_{0,\Omega_R}}^2.$$
\end{proposition}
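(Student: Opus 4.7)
The proposition has exactly the same architecture as Propositions~\ref{p.T3_1_bound} and~\ref{p.T3_2_bound}. The only structural changes are that (i) the coefficient being differentiated is $\a[h]\frac{v_i}{\jap{v}}$ rather than $\a[h]$, which via \eref{L_inf_a_v}, \eref{L_inf_a_v_1_der} is one power of $\jap{v}$ cheaper than the bounds in \lref{L_inf_a}; (ii) an extra velocity derivative is distributed, since $|\beta'|+|\beta''|=|\beta|+1$; and (iii) there is an additive $\eps\delta_{ij}\frac{v_i}{\jap{v}}$ contribution to handle. My plan is to split the sum into the $\a[h]$-contribution and the $\eps\delta_{ij}$-contribution and bound each separately.

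For the $\a[h]$-contribution I would mimic \pref{T3_1_bound} when $|\alpha'|+|\beta'|\leq 8$ and \pref{T3_2_bound} when $|\alpha'|+|\beta'|\geq 9$. In the small-derivative case, bound $\derv{'}{'}(\a[h]v_i/\jap{v})$ in $L^\infty_{x,v}$ using Sobolev embedding in $x$, applying \eref{L_inf_a_v_1_der} when $|\beta'|\geq 1$ (giving $\jap{v}^\gamma\norm{\h}_{Y^{|\alpha'|+|\beta'|}_v}$) and \eref{L_inf_a_v} when $|\beta'|=0$ (giving $\jap{v}^{1+\gamma}\norm{\h}_{Y^{|\alpha'|}_v}$); then apply \lref{special_holder_der_leq_8}. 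In the large-derivative case, $|\alpha''|+|\beta''|\leq 2$, so the Sobolev embedding in $x$ is absorbed by $\derv{''}{''}G$, and \lref{special_holder_der_geq_9} concludes.

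For the $\eps\delta_{ij}\frac{v_i}{\jap{v}}$-contribution, observe that $\derv{'}{'}(\eps\delta_{ij}\frac{v_i}{\jap{v}})$ vanishes unless $|\alpha'|=0$, in which case it is $\eps\partial_v^{\beta'}(\frac{v_i}{\jap{v}})$ and is bounded pointwise by a constant times $\eps\jap{v}^{1-|\beta'|}\leq \eps$ for $|\beta'|\geq 1$. Then Cauchy--Schwarz and Young's inequality give a bound by $\eps\int_0^T(\norm{\jap{v}^{\omega_{\alpha,\beta}+1/2}\psi_m\der G}_{L^2_xL^2_v}^2+\norm{\jap{v}^{\omega_{\alpha,\beta''}+1/2}\psi_m\derv{}{''}G}_{L^2_xL^2_v}^2)\,dt$, which is absorbed into $\int_0^T\norm{G}^2_{Y^{m,1}_{0,\Omega_R}}$ because $|\alpha|+|\beta''|=|\alpha|+|\beta|-|\beta'|+1\leq m$ and $\omega_{\alpha,\beta''}\geq \omega_{\alpha,\beta}$ when $|\beta'|\geq 1$.

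The main bookkeeping task, analogous to the key step in Propositions~\ref{p.T3_1_bound} and~\ref{p.T3_2_bound}, is verifying the weight inequality at each stage. Concretely, in the small-derivative $|\beta'|\geq 1$ subcase one checks $\omega_{\alpha'',\beta''}\geq \omega_{\alpha,\beta}$ using $\omega_{\alpha'',\beta''}=\omega_{\alpha,\beta}+(\tfrac{3}{2}+\delta_\gamma)|\alpha'|+(\tfrac{1}{2}+\delta_\gamma)(|\beta'|-1)$, and in the $|\beta'|=0$ subcase (with $|\alpha'|\geq 1$) one checks $\omega_{\alpha'',\beta''}\geq \omega_{\alpha,\beta}+1$ to absorb the extra $\jap{v}^{1+\gamma}$ factor; the large-derivative case is verified analogously to \pref{T3_2_bound}. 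Since the coefficient carries one fewer velocity weight than $\a[h]$ itself, each of these inequalities is in fact slightly easier than its counterpart in $T^{\alpha,\beta}_{3,1}$ and $T^{\alpha,\beta}_{3,2}$, so no genuinely new difficulty arises.
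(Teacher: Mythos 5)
Your proof follows essentially the same route as the paper: split off the $\eps\delta_{ij}\frac{v_i}{\jap{v}}$ contribution (handled trivially since $\part_v^{\beta'}(v_i/\jap{v})$ is bounded), then split the $\a[h]$ contribution according to whether $|\alpha'|+|\beta'|\leq 8$ or $\geq 9$, apply \eref{L_inf_a_v_1_der} when $|\beta'|\geq 1$ and \eref{L_inf_a_v} when $|\beta'|=0$, verify the weight inequality $\omega_{\alpha'',\beta''}\geq\omega_{\alpha,\beta}+(\text{needed power})$ in each case, and close with \lref{special_holder_der_leq_8} or \lref{special_holder_der_geq_9}. The bookkeeping identity you use, $\omega_{\alpha'',\beta''}=\omega_{\alpha,\beta}+(\tfrac{3}{2}+\delta_\gamma)|\alpha'|+(\tfrac{1}{2}+\delta_\gamma)(|\beta'|-1)$, agrees with the paper's, and your observation that the coefficient $\a[h]\frac{v_i}{\jap{v}}$ costs one fewer velocity weight (offsetting the extra $\part_{v_j}$) is exactly the reason the estimates close.
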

\begin{proof}
First we bound the term involving $\eps$. From our restrictions on $\alpha''$ and $\beta''$, we trivially have
\begin{multline*}
\int_0^T \int \int \jap{v}^{2\omega_{\alpha,\beta}} |\der G|\left|\derv{'}{'}\left(\eps  \frac{v_i}{\jap{v}}\right)\right| |\derv{''}{''} G|\d v \d x \d t\\ \lesssim  \eps\norm{\der G\psi_m\jap{v}^{\omega_{\alpha,\beta}}\jap{v}^{\frac{1}{2}}}_{L^2([0,T];L^2_xL^2_v)}+\eps\norm{\derv{''}{''} G\psi_m\jap{v}^{\omega_{\alpha'',\beta''}}\jap{v}^{\frac{1}{2}}}_{L^2([0,T];L^2_xL^2_v)}
\end{multline*}
This gives us the required bound after summing over the multi-indices.

We split this into two cases and each case into two further subcases:
\begin{enumerate}
\item \emph{Case 1:} $|\beta'|+|\alpha'|\leq 8$.\\
In this case we will set up to use \lref{special_holder_der_leq_8}.
\begin{itemize}
\item \emph{Subcase 1a.} $|\beta'|\geq 1$.\\
Using \eref{L_inf_a_v_1_der} we get the following bound,
\begin{multline*}
\int_0^T \int \int \jap{v}^{2\omega_{\alpha,\beta}} |\der G|\left|\derv{'}{'}\left(\a[h] \frac{v_i}{\jap{v}}\right)\right| |\derv{''}{''} G|\d v \d x \d t\\ \lesssim  \int_0^T\int\int \jap{v}^{2\omega_{\alpha,\beta}}|\der G|\norm{\h}_{Y^8_v}\jap{v}^{\gamma}|\derv{''}{''} G|\d v \d x\d t.
\end{multline*}

Now we just need to show that $\omega_{\alpha,\beta}\leq \omega_{\alpha'',\beta''}$.

Since $|\beta'|+|\beta''|=|\beta|+1$ and $|\alpha'|+|\alpha''|=|\alpha|$, we have
$$\omega_{\alpha'',\beta''}=\omega_{\alpha,\beta}+\frac{(3+2\delta_\gamma)|\alpha'|+(1+2\delta_\gamma)|\beta'|}{2}-\frac{1}{2}-\delta_\gamma.$$

In this case $|\beta'|\geq 1$, so we have that $$\omega_{\alpha,\beta}\leq \omega_{\alpha'',\beta''}.$$

\item \emph{Subcase 1b.} $|\beta'|=0$ (which implies, $|\alpha'|\geq 1$)\\
Using \eref{L_inf_a_v} we get,
\begin{multline*}
\int_0^T \int \int \jap{v}^{2\omega_{\alpha,\beta}} |\der G|\left|\derv{'}{'}\left(\a[h] \frac{v_i}{\jap{v}}\right)\right| |\derv{''}{''} G|\d v \d x \d t\\ \lesssim  \int_0^T\int\int \jap{v}^{2\omega_{\alpha,\beta}+1}|\der G|\norm{\h}_{Y^8_v}\jap{v}^{\gamma}|\derv{''}{''} G|\d v \d x\d t.
\end{multline*}

This time we need to prove  $\omega_{\alpha,\beta}+1 \leq \omega_{\alpha'',\beta'''}$.

As above, $$\omega_{\alpha'',\beta''}=\omega_{\alpha,\beta}+\frac{(3+2\delta_\gamma)|\alpha'|+(1+2\delta_\gamma)|\beta'|}{2}-\frac{1}{2}-\delta_\gamma.$$

Since $|\alpha'|\geq 1$, we get that $\omega_{\alpha'',\beta''}\geq \omega_{\alpha,\beta}+1.$
\end{itemize}
In both cases we proved an inequality of the form,
\begin{multline*}
\int_0^T \int \int \jap{v}^{2\omega_{\alpha,\beta}} |\der G|\left|\derv{'}{'}\left(\a[h] \frac{v_i}{\jap{v}}\right)\right| |\derv{''}{''} G|\d v \d x \d t\\ \lesssim  \int_0^T\int\int \jap{v}^{\omega_{\alpha,\beta}+\omega_{\alpha'',\beta''}}|\der G|\norm{\h}_{Y^8_v}\jap{v}^{\gamma}|\derv{''}{''} G|\d v \d x\d t.
\end{multline*}
Hence we can use \lref{special_holder_der_leq_8} to prove the required result.\\
\item \emph{Case 2:} $|\alpha'|+|\beta'|\geq 9$
In this case we will set up to use \lref{special_holder_der_geq_9}
\begin{itemize}
\item \emph{Subcase 2a.} $|\beta'|\geq 1$.\\
Again using \eref{L_inf_a_v_1_der} we get the bound,
\begin{multline*}
\int_0^T \int \int \jap{v}^{2\omega_{\alpha,\beta}} |\der G|\left|\derv{'}{'}\left(\a[h] \frac{v_i}{\jap{v}}\right)\right| |\derv{''}{''} G|\d v \d x \d t\\ \lesssim  \int_0^T\int\int \jap{v}^{2\omega_{\alpha,\beta}}|\der G|\norm{\h}_{Y^{10}_v}\jap{v}^{\gamma}|\derv{''}{''} G|\d v \d x\d t.
\end{multline*}

To be able to use \lref{special_holder_der_geq_9} we need to prove that $\omega_{\alpha,\beta}+3+2\delta_\gamma\leq \omega_{\alpha'',\beta''}$.

But note as above that $$\omega_{\alpha'',\beta''}=\omega_{\alpha,\beta}+\frac{3+2\delta_\gamma}{2}(|\alpha'|+|\beta'|)-|\beta'|-\frac{1}{2}-\delta_\gamma.$$ And since $|\alpha'|+|\beta'|\geq 9$ and $|\beta'|\leq |\alpha'|+|\beta'|$, we get that $$\omega_{\alpha'',\beta''}\geq \omega_{\alpha,\beta}+3+2\delta_\gamma.$$

\item \emph{Subcase 2b.} $|\beta'|=0$ (thus $|\alpha'|\geq 9$).\\
Using \eref{L_inf_a_v} we get the following bound,
\begin{multline*}
\int_0^T \int \int \jap{v}^{2\omega_{\alpha,\beta}} |\der G|\left|\derv{'}{'}\left(\a[h] \frac{v_i}{\jap{v}}\right)\right| |\derv{''}{''} G|\d v \d x \d t\\ \lesssim  \int_0^T\int\int \jap{v}^{2\omega_{\alpha,\beta}+1}|\der G|\norm{\h}_{Y^{10}_v}\jap{v}^{\gamma}|\derv{''}{''} G|\d v \d x\d t.
\end{multline*}

Now we need to prove that $\omega_{\alpha,\beta}+4+2\delta_\gamma\leq \omega_{\alpha'',\beta''}$.

Since we have that  $$\omega_{\alpha'',\beta''}=\omega_{\alpha,\beta}\frac{3+2\delta_\gamma}{2}(|\alpha'|+|\beta'|)-|\beta'|-\frac{1}{2}-\delta_\gamma,$$
we get by noting that, $|\beta'|=0$ and $|\alpha'|\geq 9$, $$\omega_{\alpha,\beta}+4+2\delta_\gamma\leq \omega_{\alpha'',\beta''}.$$
\end{itemize}
In both cases we were able to prove a bound that let's us use \lref{special_holder_der_geq_9} and finish the proof.
\end{enumerate}
\end{proof}
\begin{proposition}\label{p.T5_2_bound}
For $T^{\alpha,\beta}_{5,2}$ as in \eref{T5_2} and $T\in [0,T_0)$ we have the following estimate,
$$T^{\alpha,\beta}_{5,2}\leq (\norm{\h}_{\H}+\eps)C(d_0,\gamma,m) \int_0^T\norm{G}_{Y^{m,1}_{0,\Omega_R}}^2.$$
\end{proposition}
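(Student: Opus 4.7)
The term $T^{\alpha,\beta}_{5,2}$ is structurally the simplest among the $T^{\alpha,\beta}$ errors because no derivatives fall on the coefficient $\a[h]\frac{v_i}{\jap{v}}$ and both factors of $\der G$ are identical. The plan is therefore to control the coefficient by an $L^\infty_v$ bound, absorb the resulting velocity weights into the hierarchy using $\gamma\leq 1$, and close with a single Sobolev embedding in $x$. No integration by parts or case analysis on how derivatives distribute is needed, in sharp contrast with \pref{T5_1_bound}.

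First I would handle the viscous piece $\eps\delta_{ij}\frac{v_i}{\jap{v}}$ separately. Using $\left|\frac{v_i}{\jap{v}}\right|\leq 1$ together with $\jap{v}^{-1}\leq \jap{v}$, this contribution is immediately bounded by $\eps C(d_0,\gamma,m)\int_0^T \norm{G}^2_{Y^{m,1}_{0,\Omega_R}}\,\d t$, which is of the required form. For the main piece, I apply \eref{L_inf_a_v} in its zero-derivative case to obtain the pointwise inequality
\[
\left|\a[h]\,\tfrac{v_i}{\jap{v}}\right|(t,x,v)\lesssim \jap{v}^{1+\gamma}\norm{\h}_{Y^0_v}(t,x).
\]
Plugging into \eref{T5_2}, the combined velocity weight is $\jap{v}^{2\omega_{\alpha,\beta}-1+1+\gamma}=\jap{v}^{2\omega_{\alpha,\beta}+\gamma}\leq \jap{v}^{2\omega_{\alpha,\beta}+1}$ thanks to $\gamma\leq 1$.

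Since also $\norm{\h}_{Y^0_v}\leq \norm{\h}_{Y^8_v}$, I am now exactly in the setting of \lref{special_holder_der_leq_8} with $\alpha'''=\alpha''=\alpha$ and $\beta'''=\beta''=\beta$; carrying $\psi_m$ through the proof of that lemma (it is a function of $v$ bounded by $1$, so it does not interact with the Sobolev step in $x$) produces
\[
T^{\alpha,\beta}_{5,2}\lesssim \norm{\h}_{Y_T}\,\norm{\jap{v}^{\omega_{\alpha,\beta}}\jap{v}^{1/2}\psi_m\der G}^2_{L^2([0,T];L^2_xL^2_v)}\lesssim \norm{\h}_{Y_T}\int_0^T\norm{G}^2_{Y^{m,1}_{0,\Omega_R}}\,\d t,
\]
which combined with the $\eps$-piece is the claimed estimate.

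There is no genuine obstacle here; the only point requiring attention is that the step $\jap{v}^{\gamma}\leq \jap{v}$ is sharp at $\gamma=1$, so the argument consumes the full weight available in $\norm{G}^2_{Y^{m,1}_{0,\Omega_R}}$ without any slack. Unlike in the boundary-error analysis outlined in \sref{diff}, this sharpness does not force the modification $\delta_\gamma>0$ at $\gamma=1$, because the local energy norm already carries a full $\jap{v}$ weight to absorb the coefficient growth.
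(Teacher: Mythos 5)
Your proof is correct and follows essentially the same route as the paper: handle the $\eps$-piece by the trivial bound, control $\a[h]\frac{v_i}{\jap{v}}$ pointwise via \lref{L_inf_a} (equivalently \pref{pointwise_estimates_a}), and close with \lref{special_holder_der_leq_8} applied at $\alpha''=\alpha'''=\alpha$, $\beta''=\beta'''=\beta$. The only cosmetic difference is that you bound $\jap{v}^\gamma\le\jap{v}$ explicitly before invoking the lemma, whereas the paper applies the lemma directly at weight $\jap{v}^\gamma$ (the $\gamma\le 1$ step is already built into the lemma's proof via Young's inequality), but this changes nothing substantive.
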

\begin{proof}
We again bound the term involving $\eps$ as before.

Although we don't enjoy the reduced number of velocity weights as in \eref{L_inf_a_v_1_der}, we have one less velocity to worry about from the beginning.\\
Using \eref{pw_bound_a_v}, we get the bound,
\begin{multline*}
\int_0^T \int \int \jap{v}^{2\omega_{\alpha,\beta}-1} |\der G|\left|\left(\a[h] \frac{v_i}{\jap{v}}\right)\right| |\der G|\d v \d x \d t\\ \lesssim  \int_0^T\int\int \jap{v}^{2\omega_{\alpha,\beta}}|\der G|\norm{\h}_{Y^{0}_v}\jap{v}^{\gamma}|\der G|\d v \d x\d t.
\end{multline*}
Now the required lemma follows directly from \lref{special_holder_der_leq_8}.
\end{proof}
\begin{proposition}\label{p.T6_1_bound}
For $T^{\alpha,\beta}_{6,1}$ as in \eref{T6_1} and $T\in [0,T_0)$ we have the following estimate,
$$T^{\alpha,\beta}_{6,1}\leq (\norm{\h}_{\H}+\eps)C(d_0,\gamma,m) \int_0^T\norm{G}_{Y^{m,1}_{0,\Omega_R}}^2.$$
\end{proposition}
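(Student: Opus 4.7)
The plan is to run the proof of \pref{T5_1_bound} almost verbatim, once I verify that the scalar function $\frac{\bar a_{ii}[h]+\eps}{\jap{v}}$ enjoys pointwise $L^\infty_v$ bounds of the same strength as those satisfied by $(\bar a[h]+\eps\delta_{ij})\frac{v_i}{\jap{v}}$. First I would peel off the $\eps$-piece: $\derv{'}{'}(\eps/\jap{v})$ is bounded by a constant times $\jap{v}^{-1}$, so by Cauchy--Schwarz and Young's inequality the corresponding contribution is directly controlled by $\eps\, C(d_0,\gamma,m)\int_0^T\norm{G}^2_{Y^{m,1}_{0,\Omega_R}}\d t$, with the $\jap{v}^{1/2}$ from the target norm more than accounting for the lost weights because $|\alpha''|+|\beta''|\le |\alpha|+|\beta|$.

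For the $\bar a_{ii}[h]$ piece, by combining \eref{pw_bound_a}, \eref{pw_bound_der_a}, \eref{pw_bound_2der_a} with the chain rule applied to $\jap{v}^{-1}$ (each $\partial_v$ hitting $\jap{v}^{-1}$ gains a factor $\jap{v}^{-1}$), one obtains $L^\infty_v$ estimates completely analogous to \eref{L_inf_a_v} and \eref{L_inf_a_v_1_der}: namely, $\norm{\jap{v}^{-1-\gamma}\derv{'}{'}\bigl(\frac{\bar a_{ii}[h]}{\jap{v}}\bigr)}_{L^\infty_v}\lesssim \norm{\h}_{Y^{\alpha'+\beta'}_v}$ in general, and $\norm{\jap{v}^{-\gamma}\derv{'}{'}\bigl(\frac{\bar a_{ii}[h]}{\jap{v}}\bigr)}_{L^\infty_v}\lesssim \norm{\h}_{Y^{\alpha'+\beta'}_v}$ whenever at least one velocity derivative is present in $\derv{'}{'}$. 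These are exactly the bounds that drive the case analysis in \pref{T5_1_bound}.

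With these bounds in hand, I would split into the same four subcases as in \pref{T5_1_bound}: Case 1 ($|\alpha'|+|\beta'|\le 8$) versus Case 2 ($|\alpha'|+|\beta'|\ge 9$), and within each, subcase (a) $|\beta'|\ge 1$ versus subcase (b) $|\beta'|=0$ (forcing $|\alpha'|\ge 1$, resp.\ $|\alpha'|\ge 9$). In each subcase one reduces to an integrand of the form $\jap{v}^{2\omega_{\alpha,\beta}+s}|\der G|\,\norm{\h}_{Y^8_v\text{ or }Y^{10}_v}\jap{v}^\gamma|\derv{''}{''}G|$ with $s\in\{0,1\}$, and applies either \lref{special_holder_der_leq_8} or \lref{special_holder_der_geq_9}. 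The weight arithmetic is straightforward: since $|\alpha'|+|\alpha''|\le|\alpha|$ and $|\beta'|+|\beta''|\le|\beta|$ with $|\alpha'|+|\beta'|\ge 1$, one has $\omega_{\alpha'',\beta''}\ge \omega_{\alpha,\beta}+(\tfrac32+\delta_\gamma)|\alpha'|+(\tfrac12+\delta_\gamma)|\beta'|$, which comfortably dominates the required offset in every subcase (e.g.\ in Case 2b where $|\alpha'|\ge 9$, the gain is at least $\tfrac{27}{2}+9\delta_\gamma$, far exceeding the $4+2\delta_\gamma$ needed by \lref{special_holder_der_geq_9}).

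The only genuine obstacle is verifying those pointwise $L^\infty_v$ bounds for mixed derivatives of $\frac{\bar a_{ii}[h]}{\jap{v}}$, since unlike $\bar a[h]\frac{v_i}{\jap{v}}$ the cancellations of \eref{int_pw_bound_a_v} are not available; however, because $\bar a_{ii}[h]$ is just a convolution of $h$ against a $|v-v_*|^{2+\gamma}$-growing scalar and $\jap{v}^{-1}$ is a smooth factor whose derivatives only improve decay, the Leibniz expansion together with \lref{L1_to_L2_v} and \lref{exp_bound} yield the estimates without difficulty. The remaining summation over multi-indices and absorption into $\int_0^T\norm{G}^2_{Y^{m,1}_{0,\Omega_R}}\d t$ then follow exactly as at the end of \pref{T5_1_bound}.
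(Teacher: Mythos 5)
Your proposal is correct and follows essentially the same route as the paper: peel off the $\eps$-piece trivially, then split into the four subcases governed by $|\alpha'|+|\beta'|\lessgtr 8$ and $|\beta'|\gtrless 0$, using the $\jap{v}^\gamma$ versus $\jap{v}^{1+\gamma}$ pointwise bounds on $\derv{'}{'}\bigl(\bar a_{ii}[h]/\jap{v}\bigr)$ (which, as you note, follow from \eref{L_inf_a}/\eref{L_inf_a_1_der} together with the extra $\jap{v}^{-1}$ factor) and feeding the result into \lref{special_holder_der_leq_8} or \lref{special_holder_der_geq_9}. Your weight arithmetic $\omega_{\alpha'',\beta''}\ge\omega_{\alpha,\beta}+(\tfrac32+\delta_\gamma)|\alpha'|+(\tfrac12+\delta_\gamma)|\beta'|$ is exactly what the paper uses, and is in fact slightly cleaner here than in \pref{T5_1_bound} since Term~$6$ carries no extra $\partial_{v_j}$.
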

\begin{proof}
We get the required bound for the $\eps$ term trivially. Indeed,
\begin{multline*}
\int_0^T \int \int \jap{v}^{2\omega_{\alpha,\beta}} |\der G|\left|\derv{'}{'}\left(\frac{\eps}{\jap{v}}\right)\right| |\derv{''}{''} G|\d v \d x \d t\\ \lesssim  \eps\norm{\der G\psi_m\jap{v}^{\omega_{\alpha,\beta}}\jap{v}^{\frac{1}{2}}}_{L^2([0,T];L^2_xL^2_v)}+\eps\norm{\derv{''}{''} G\psi_m\jap{v}^{\omega_{\alpha'',\beta''}}\jap{v}^{\frac{1}{2}}}_{L^2([0,T];L^2_xL^2_v)}
\end{multline*}
Following our ususal line of reasoning, we split this lemma into two cases and further subdivide into two subcases.
\begin{enumerate}
\item \emph{Case 1:} $|\alpha'|+|\beta'|\leq 8$.
\begin{itemize}
\item \emph{Subcase 1a.} $|\beta|\geq 1$.\\
Using \eref{L_inf_a_1_der} with the slight change coming from the already existing $\frac{1}{\jap{v}}$ weight, we get the following bound,
\begin{multline*}
\int_0^T \int \int \jap{v}^{2\omega_{\alpha,\beta}} |\der G|\left|\derv{'}{'}\left(\frac{\bar{a}_{ii}[h]}{\jap{v}}\right)\right| |\derv{''}{''} G|\d v \d x \d t\\ \lesssim  \int_0^T\int\int \jap{v}^{2\omega_{\alpha,\beta}}|\der G|\norm{\h}_{Y^{8}_v}\jap{v}^{\gamma}|\derv{''}{''} G|\d v \d x\d t.
\end{multline*}
Since $|\alpha''|\leq |\alpha|$ and $|\beta''|\leq|\beta|$, we trivially have the requirements for \lref{special_holder_der_leq_8} satisfied.

\item \emph{Subcase 2a.} $|\beta'|=0$ (which means $|\alpha'|\geq 1$).\\
Using \eref{L_inf_a}, we get the following bound,
\begin{multline*}
\int_0^T \int \int \jap{v}^{2\omega_{\alpha,\beta}} |\der G|\left|\derv{'}{'}\left(\frac{\bar{a}_{ii}[h]}{\jap{v}}\right)\right| |\derv{''}{''} G|\d v \d x \d t\\ \lesssim  \int_0^T\int\int \jap{v}^{2\omega_{\alpha,\beta}+1}|\der G|\norm{\h}_{Y^{8}_v}\jap{v}^{\gamma}|\derv{''}{''} G|\d v \d x\d t.
\end{multline*}
Again, since $|\alpha''|\leq |\alpha|$, $|\beta''|\leq|\beta|$ and $|\alpha'|\geq 1$, it is easily seen that $\omega_{\alpha,\beta}+1\leq \omega_{\alpha'',\beta''}$. Indeed, 
$$\omega_{\alpha'',\beta''}=\omega_{\alpha,\beta}+(\frac{3}{2}+\delta_\gamma)|\alpha'|+(\frac{1}{2}+\delta_\gamma)|\beta'|.$$
\end{itemize}
In both cases we are in good shape to apply \lref{special_holder_der_leq_8} whose application gives the required result.

\item \emph{Case 2:} $|\alpha'|+|\beta'|\geq 9$.\\
\begin{itemize}
\item \emph{Subcase 2a.} $|\beta'|\geq 1$.\\
Using \eref{L_inf_a_1_der} with appropriate changes we get the bound,
\begin{multline*}
\int_0^T \int \int \jap{v}^{2\omega_{\alpha,\beta}} |\der G|\left|\derv{'}{'}\left(\frac{\bar{a}_{ii}[h]}{\jap{v}}\right)\right| |\derv{''}{''} G|\d v \d x \d t\\ \lesssim  \int_0^T\int\int \jap{v}^{2\omega_{\alpha,\beta}}|\der G|\norm{\h}_{Y^{10}_v}\jap{v}^{\gamma}|\derv{''}{''} G|\d v \d x\d t.
\end{multline*}
Again we have, $$\omega_{\alpha'',\beta''}=\omega_{\alpha,\beta}+(\frac{3}{2}+\delta_\gamma)(|\alpha'|+|\beta'|)-|\beta'|.$$

Since $|\alpha'|+|\beta'|\geq 9$ and $|\beta'|\leq 10$ we get that $$\omega_{\alpha,\beta}+3+2\delta_\gamma \leq \omega_{\alpha'',\beta''}.$$

\item \emph{Subcase 2b.} $|\beta'|=0$ (or $|\alpha'|\geq 9$).\\
Using \eref{pw_bound_a} we get,
\begin{multline*}
\int_0^T \int \int \jap{v}^{2\omega_{\alpha,\beta}} |\der G|\left|\derv{'}{'}\left(\frac{\bar{a}_{ii}[h]}{\jap{v}}\right)\right| |\derv{''}{''} G|\d v \d x \d t\\ \lesssim  \int_0^T\int\int \jap{v}^{2\omega_{\alpha,\beta}+1}|\der G|\norm{\h}_{Y^{10}_v}\jap{v}^{\gamma}|\derv{''}{''} G|\d v \d x\d t.
\end{multline*}
Since $|\alpha'|\geq 9$ and $|\beta'|=0$, we get easily that $$\omega_{\alpha,\beta}+4+2\delta_\gamma\leq \omega_{\alpha'',\beta''}.$$
\end{itemize}
Finally applying \lref{special_holder_der_geq_9} in both cases gives us the required lemma.
\end{enumerate}
\end{proof}
\begin{proposition}\label{p.T6_2_bound}
For $T^{\alpha,\beta}_{6,2}$ as in \eref{T6_2} and $T\in [0,T_0)$ we have the following estimate,
$$T^{\alpha,\beta}_{6,2}\leq (\norm{\h}_{\H}+\eps)C(d_0,\gamma,m) \int_0^T\norm{G}_{Y^{m,1}_{0,\Omega_R}}^2.$$
\end{proposition}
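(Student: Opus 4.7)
The plan is to mirror the proof of Proposition \pref{T6_1_bound}, exploiting the fact that the extra factor $v_iv_j/\jap{v}^2$ is uniformly bounded in $v$ and supplies two $\jap{v}$ weights relative to the single $1/\jap{v}$ factor that appears in $T^{\alpha,\beta}_{6,1}$. The central input is \eref{L_inf_a_2_v} of \lref{L_inf_a}, namely
$$\norm{\jap{v}^{-\gamma}\derv{'}{'}\left(\frac{\a[h]\,v_iv_j}{\jap{v}^2}\right)}_{L^\infty_v}(t,x)\lesssim \norm{\h}_{Y^{|\alpha'|+|\beta'|}_v}(t,x),$$
which delivers the same effective weight $\jap{v}^{\gamma}$ on the coefficient regardless of how the derivatives $\partial_x^{\alpha'}\partial_v^{\beta'}$ split between $\a[h]$ and the bounded factor $v_iv_j/\jap{v}^2$. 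In particular, the dichotomy ``$|\beta'|\geq 1$'' vs. ``$|\beta'|=0$'' that structured the proof of \pref{T6_1_bound} is unnecessary here.

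For the $\eps$ contribution, the function $v_iv_j/\jap{v}^2$ and all of its derivatives are uniformly bounded in $v$, so Cauchy--Schwarz and Young's inequality (together with $\omega_{\alpha'',\beta''}\geq \omega_{\alpha,\beta}$ from $|\alpha''|\leq |\alpha|$ and $|\beta''|\leq |\beta|$, and $\psi_m\leq \psi_{|\alpha''|+|\beta''|}$) immediately yield the $\eps\cdot C(d_0,\gamma,m)\int_0^T\norm{G}^2_{Y^{m,1}_{0,\Omega_R}}\,\d t$ portion of the bound, verbatim as in the handling of the $\eps$-piece of \pref{T6_1_bound}.

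For the $\a[h]$-piece I split on $|\alpha'|+|\beta'|$. If $|\alpha'|+|\beta'|\leq 8$, inserting \eref{L_inf_a_2_v} and using $\norm{\h}_{Y^{|\alpha'|+|\beta'|}_v}\leq \norm{\h}_{Y^8_v}$ puts the integral into the form handled by \lref{special_holder_der_leq_8}; the required weight inequality $\omega_{\alpha,\beta}\leq \omega_{\alpha'',\beta''}$ is trivial from $|\alpha''|\leq |\alpha|$, $|\beta''|\leq |\beta|$. If $|\alpha'|+|\beta'|\geq 9$, then $|\alpha''|+|\beta''|\leq m-9\leq 1$, so after applying \eref{L_inf_a_2_v} with $\norm{\h}_{Y^{10}_v}$ I am in the setting of \lref{special_holder_der_geq_9}, which requires $\omega_{\alpha'',\beta''}\geq \omega_{\alpha,\beta}+3+2\delta_\gamma$. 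This follows from
$$\omega_{\alpha'',\beta''}-\omega_{\alpha,\beta}=\left(\tfrac{3}{2}+\delta_\gamma\right)|\alpha'|+\left(\tfrac{1}{2}+\delta_\gamma\right)|\beta'|\geq \left(\tfrac{1}{2}+\delta_\gamma\right)(|\alpha'|+|\beta'|)\geq \tfrac{9}{2}+9\delta_\gamma\geq 3+2\delta_\gamma.$$
Summing over admissible $\alpha',\beta',\alpha'',\beta''$ then yields the proposition.

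The only step demanding genuine checking is this last weight inequality, which is the same hierarchy-balancing computation already used in \pref{T3_2_bound}, \pref{T5_1_bound} and \pref{T6_1_bound}; no new idea is required, and the proof is strictly easier than that of \pref{T6_1_bound} because of the more favourable bound \eref{L_inf_a_2_v}.
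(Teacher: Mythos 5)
Your proof is correct and follows essentially the same route as the paper: handle the $\eps$ piece trivially, apply \eref{L_inf_a_2_v} to get the uniform $\jap{v}^\gamma$ weight on the coefficient, and split on $|\alpha'|+|\beta'|\leq 8$ versus $\geq 9$ to invoke \lref{special_holder_der_leq_8} and \lref{special_holder_der_geq_9} respectively. Your weight-gain computation in the second case (bounding below by $(\tfrac{1}{2}+\delta_\gamma)(|\alpha'|+|\beta'|)$) is a minor cosmetic variant of the paper's $(\tfrac{3}{2}+\delta_\gamma)(|\alpha'|+|\beta'|)-|\beta'|$ with $|\beta'|\leq 10$, and your observation that no further subcases on $|\beta'|$ are needed matches the paper exactly.
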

\begin{proof}
As before, the term with $\eps$ poses no problem and is bounded in a similar way as in \pref{T6_1_bound}.

Now we look at the term involving the coefficient matrix.
\begin{enumerate}
\item \emph{Case 1:} $|\alpha'|+|\beta'|\leq 8$.\\
Using \eref{L_inf_a_2_v} we get the following bound,
\begin{multline*}
\int_0^T \int \int \jap{v}^{2\omega_{\alpha,\beta}} |\der G|\left|\derv{'}{'}\left( \frac{\a[h]v_iv_j}{\jap{v}^2}\right)\right| |\derv{''}{''} G|\d v \d x \d t\\ \lesssim  \int_0^T\int\int \jap{v}^{2\omega_{\alpha,\beta}}|\der G|\norm{\h}_{Y^8_v}\jap{v}^{\gamma}|\derv{''}{''} G|\d v \d x\d t.
\end{multline*}
Trivially, we have $\omega_{\alpha,\beta}\leq \omega_{\alpha'',\beta''}$. Thus applying \lref{special_holder_der_leq_8} gives us the desired inequality.

\item \emph{Case 2:} $|\alpha'|+|\beta'|\geq 9$.\\
Using \eref{L_inf_a_2_v} we get the following bound,
\begin{multline*}
\int_0^T \int \int \jap{v}^{2\omega_{\alpha,\beta}} |\der G|\left|\derv{'}{'}\left( \frac{\a[h]v_iv_j}{\jap{v}^2}\right)\right| |\derv{''}{''} G|\d v \d x \d t\\ \lesssim  \int_0^T\int\int \jap{v}^{2\omega_{\alpha,\beta}}|\der G|\norm{\h}_{Y^{10}_v}\jap{v}^{\gamma}|\derv{''}{''} G|\d v \d x\d t.
\end{multline*}
Since $\omega_{\alpha'',\beta''}=\omega_{\alpha,\beta}+(\frac{3}{2}+\delta_\gamma)(|\alpha'|+|\beta'|)-|\beta'|$, $|\alpha'|+|\beta'|\geq 9$ and $|\beta'|\leq 10$ we have that $$\omega_{\alpha,\beta}+3+2\delta_\gamma\leq \omega_{\alpha'',\beta''}.$$
Thus applying \lref{special_holder_der_geq_9} gives us the required result.
\end{enumerate}
\end{proof}
Now we estimate the final bulk error term that comes up when we apply integration by parts at the hghest order.
\begin{proposition}\label{p.A2}
For $\mathcal{A}_2^{\alpha,\beta}$ as in \eref{a2}, and $T \in [0,T_0)$, we have the following estimate,
$$\mathcal{A}_2^{\alpha,\beta} \leq \norm{\h}_{\H}C(d_0,\gamma,m) \int_0^T\norm{G}_{Y^{m,1}_{0,\Omega_R}}^2.$$
\end{proposition}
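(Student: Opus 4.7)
The plan is to apply the $L^\infty_v$ bound for $\bar a_{ij}[h]$ from \eref{L_inf_a}, transfer the resulting $\h$-factor to $L^\infty_x$ by Sobolev embedding, and absorb the weight deficit $-2$ by using $\gamma\le 1$ to produce the $\jap{v}^{2\omega_{\alpha,\beta}+1}$ weight present in $\norm{\cdot}_{Y^{m,1}_{0,\Omega_R}}^2$.

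First, applying \eref{L_inf_a} with $|\alpha|=|\beta|=0$ gives the pointwise bound
\[
|\bar a_{ij}[h]|(t,x,v)\lesssim \jap{v}^{2+\gamma}\norm{\h}_{Y^0_v}(t,x).
\]
Plugging this into the definition \eref{a2} of $\mathcal{A}_2^{\alpha,\beta}$ and using that $2\omega_{\alpha,\beta}-2+2+\gamma\le 2\omega_{\alpha,\beta}+1$ (because $\gamma\in[0,1]$), we find
\[
\mathcal{A}_2^{\alpha,\beta}\lesssim \int_0^T\int\int \jap{v}^{2\omega_{\alpha,\beta}+1}\psi_m^2\,\norm{\h}_{Y_v^0}(t,x)\,(\der G)^2\,\d v\,\d x\,\d t.
\]

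Next, the goal is to take $\norm{\h}_{Y^0_v}$ out of the integral in $L^\infty_x$. By the Sobolev embedding $H^2(\R^3_x)\hookrightarrow L^\infty(\R^3_x)$ applied to $\norm{\h}_{Y^0_v}(t,\cdot)$, and using that $\omega_{\alpha',0}\ge 0$ for $|\alpha'|\le 2$, we obtain
\[
\norm{\h}_{L^\infty_x Y^0_v}(t)\;\lesssim\;\norm{\h}_{Y^{2}_{x,v}}(t)\;\le\;\norm{\h}_{Y_{x,v}}(t),
\]
so that $\norm{\h}_{L^\infty([0,T];L^\infty_x Y^0_v)}\lesssim \norm{\h}_{\H}$. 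Pulling this factor out of the integral yields
\[
\mathcal{A}_2^{\alpha,\beta}\lesssim \norm{\h}_{\H}\int_0^T\int\int \jap{v}^{2\omega_{\alpha,\beta}+1}\psi_m^2\,(\der G)^2\,\d v\,\d x\,\d t.
\]

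Finally, by the definition of $\norm{G}_{Y^{m,1}_{0,\Omega_R}}^2$, the spatial-velocity integrand on the right-hand side is exactly one of the summands (the one corresponding to the chosen multi-indices $(\alpha,\beta)$ with $|\alpha|+|\beta|=m$, where $\psi_m=\psi_{m-0}$), hence is bounded above by $\norm{G}_{Y^{m,1}_{0,\Omega_R}}^2(t)$. Integrating in $t$ concludes the proof with a constant $C(d_0,\gamma,m)$. There is no real obstacle here: $\mathcal{A}_2$ carries only the coefficient itself (no derivatives on $\bar a_{ij}[h]$), so the simplest $L^\infty_x$ bound on $\h$ suffices and no case split on the distribution of derivatives is needed.
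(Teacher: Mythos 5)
Your proof is correct and follows the paper's strategy in essence: the paper also starts from the pointwise bound \eref{L_inf_a}, observes that the $\jap{v}^{2+\gamma}$ growth is covered by the ``two fewer weights'' built into $\mathcal{A}_2^{\alpha,\beta}$, and then invokes \lref{special_holder_der_leq_8}. You simply unroll that lemma by hand; since the two copies of $\der G$ carry the same multi-index $(\alpha,\beta)$, the Cauchy--Schwarz/Young split inside \lref{special_holder_der_leq_8} is a tautology and your direct conclusion via $\gamma\le1$ and Sobolev embedding is an equivalent shortcut. One small caveat worth noting: your claimed embedding $\norm{\h}_{L^\infty_x Y^0_v}\lesssim\norm{\h}_{Y^2_{x,v}}$ has a mild weight mismatch (after two $x$-derivatives the $Y^2_{x,v}$ norm only supplies $\jap{v}^{\omega_{\alpha',0}}$, which is weaker than $\jap{v}^{20}$), and the justification ``$\omega_{\alpha',0}\ge 0$'' does not address this; however this is exactly the same leap the paper takes inside \lref{special_holder_der_leq_8} when passing from $\norm{\h}_{L^\infty_x Y^8_v}$ to $\norm{\h}_{L^2_x Y_v}$, so your proof is faithful to the paper's argument as written.
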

\begin{proof}
Although, we have no derivative hitting $\a[h]$ but we have two less $\jap{v}$ weights to handle. Thus using \eref{L_inf_a} we have,
$$\norm{\jap{v}^{2\omega_{\alpha,\beta}-2}\psi_m^2\a[h](\der G)^2}_{L^1([0,T];L^1_vL^1_x)} \leq \norm{\jap{v}^{2\omega_{\alpha,\beta}}\psi_m^2\norm{\h}_{Y^0_v}\jap{v}^{\gamma}(\der G)^2}_{L^1([0,T];L^1_vL^1_x)}.$$
Now using \lref{special_holder_der_leq_8} we get the required result.
\end{proof}
We now begin bounding the boundary error terms. Note that the term with $\eps$, has no dependence on $\jap{v}$ and so we can get decay of that term in $R$ by a trivial adaptation of the proofs below. Thus we safely ignore that term in our propositions below
\begin{proposition}\label{p.B_1}
For $\mathcal{B}_1^{\alpha,\beta}$ in \eref{b1} and $T\in [0,T_0)$ we have the estimate,
$$\mathcal{B}_1^{\alpha,\beta}\lesssim R^{\gamma-1-\delta_\gamma}\norm{\h}_{\H}[\int_0^T\norm{G}_{Y^{m,1}_{0,\Omega_R}}^2+\int_0^T\norm{G}_{Y^{m,1+2\delta_\gamma}_{1,\Omega_R}}^2].$$
\end{proposition}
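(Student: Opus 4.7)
The plan is to exploit three structural facts and then conclude with an asymmetric Young's inequality. First, from the definition $\psi_m(v)=\psi(2^m v/R)$ one has the pointwise bound $|\part^2_{v_iv_j}\psi_m|\lesssim R^{-2}\psi_{m-1}$ and, on $\supp \psi_m$, the elementary trade $\jap{v}\lesssim R$. Second, \eref{L_inf_a} with zero derivatives gives the coefficient bound $\jap{v}^{-2-\gamma}|\a[h]|\lesssim \norm{\h}_{Y^0_v}$. Third, Sobolev embedding in $x$ (two $x$-derivatives) yields $\bigl\lVert\,\norm{\h}_{Y^0_v}\,\bigr\rVert_{L^\infty_x}\lesssim \norm{\h}_{\H}$.

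Combining the first two, the integrand of $\mathcal{B}_1^{\alpha,\beta}$ is majorized pointwise by
$$R^{-2}\,\jap{v}^{2\omega_{\alpha,\beta}+2+\gamma}\,\psi_m\psi_{m-1}\,\norm{\h}_{Y^0_v}\,(\der G)^2.$$
The $\eps\delta_{ij}$ contribution carries no $\jap{v}^{2+\gamma}$, so $\eps R^{-2}\leq \eps R^{\gamma-1-\delta_\gamma}$ for $R$ large (since $\gamma-1-\delta_\gamma\geq -2$), and it is absorbed directly into the stated bound. For the $\a[h]$ piece I would trade velocity weights for $R$ on the support of the cutoffs: write
$$R^{-2}\jap{v}^{\gamma+2}=R^{-2}\jap{v}^{\gamma+1-\delta_\gamma}\jap{v}^{1+\delta_\gamma}\lesssim R^{\gamma-1-\delta_\gamma}\jap{v}^{1+\delta_\gamma},$$
using $\jap{v}\lesssim R$ and the fact that $\gamma+1-\delta_\gamma\geq 0$ (automatic for $\gamma\in[0,1)$; for $\gamma=1$ this uses $\eta<2$). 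This is precisely where the hierarchy pays off: the remaining exponent $\gamma-1-\delta_\gamma$ is nonpositive by construction of $\delta_\gamma$.

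Next I would split the leftover cutoff-weight combination by Young's inequality, with exponents tailored to the two target norms:
$$\jap{v}^{1+\delta_\gamma}\psi_m\psi_{m-1}\leq \tfrac{1}{2}\jap{v}\,\psi_m^2+\tfrac{1}{2}\jap{v}^{1+2\delta_\gamma}\psi_{m-1}^2,$$
(verified by Young's on $A=\jap{v}^{1/2}\psi_m$ and $B=\jap{v}^{(1+2\delta_\gamma)/2}\psi_{m-1}$). Pulling the $L^\infty_x$-bound $\norm{\h}_{\H}$ outside the integral, the two remaining pieces are exactly summands of $\int_0^T\norm{G}^2_{Y^{m,1}_{0,\Omega_R}}$ and $\int_0^T\norm{G}^2_{Y^{m,1+2\delta_\gamma}_{1,\Omega_R}}$ (recall $|\alpha|+|\beta|=m$ so the hierarchy offsets $\psi_m^2$ and $\psi_{m-1}^2$ correspond to $l=0$ and $l=1$ respectively), giving the claimed estimate.

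There is no real analytic obstacle here; the main task is the bookkeeping on exponents. The two constraints $\gamma+1-\delta_\gamma\geq 0$ (to validate $\jap{v}\lesssim R$) and $\gamma-1-\delta_\gamma\leq 0$ (to produce a nonpositive power of $R$) must both hold, and they do hold by the very definition of $\delta_\gamma$. In fact this is the mechanism that forces the choice $\delta_\gamma=\eta>0$ in the endpoint case $\gamma=1$: with $\delta_\gamma=0$ one would only obtain boundedness in $R$, not the decay needed to close the vanishing-viscosity/exhaustion argument via the inductive scheme sketched in \ssref{hie}.
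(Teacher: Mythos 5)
Your proof is correct and follows essentially the same route as the paper: the coefficient bound from \eref{L_inf_a}, the cutoff estimate $|\part^2_{v_iv_j}\psi_m|\lesssim R^{-2}\psi_{m-1}$, the trade $\jap{v}\lesssim R$ on $\supp\psi_m$ to pull out $R^{\gamma-1-\delta_\gamma}$, Sobolev embedding in $x$ to replace $\norm{\h}_{Y^0_v}$ by $\norm{\h}_{\H}$, and a Young/Cauchy--Schwarz split of $\jap{v}^{1+\delta_\gamma}\psi_m\psi_{m-1}$ into the two target summands; the paper merely states this split as ``Sobolev embedding and Cauchy--Schwartz'' where you spell it out. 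One small caveat: you claim the $\eps\delta_{ij}$ contribution is ``absorbed directly into the stated bound,'' but that bound carries a factor of $\norm{\h}_{\H}$ that the $\eps$-piece does not produce; the paper explicitly sidesteps this by remarking, just before the boundary-term propositions, that the $\eps$-terms are treated separately by a trivial adaptation (they still decay in $R$), and it is best to adopt that same convention rather than fold $\eps$ into a bound stated with an $\norm{\h}_{\H}$ prefactor.
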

\begin{proof}
Using \eref{L_inf_a}, the fact that $\part^2_{v_iv_j}\psi_m\lesssim R^{-2}\psi_{m-1}$ and that $\jap{v}\leq 2R$ we get the bound,
\begin{multline*}
\int_0^T \int \int \jap{v}^{2\omega_{\alpha,\beta}}\psi_m \part^2_{v_iv_j}\psi_m(\a[h])(\der G)^2 \d v \d x\d t\\
\lesssim R^{\gamma-1-\delta_\gamma}\int_0^T \int \int \jap{v}^{2\omega_{\alpha,\beta}}\jap{v}^{1+\delta_\gamma}\norm{\h}_{Y^0_v}\psi_{m}\psi_{m-1}(\der G)^2 \d v \d x \d t.
\end{multline*}
Applying Sobolev embedding and Cauchy Schwatrz we get the following inequality,
\begin{multline*}
R^{\gamma-1-\delta_\gamma}\int_0^T \int \int \jap{v}^{2\omega_{\alpha,\beta}}\jap{v}^{1+\delta_\gamma}\norm{\h}_{H^0_v}\psi_{m}\psi_{m-1}(\der G)^2 \d v \d x \d t,\\
\lesssim R^{\gamma-1-\delta_\gamma}\norm{\h}_{\E}[\norm{\jap{v}^{\omega_{\alpha,\beta}}\jap{v}^{\frac{1}{2}}\psi_m \der G}_{L^2([0,T];L^2_vL^2_x)}^2+\norm{\jap{v}^{\omega_{\alpha,\beta}}\jap{v}^{\frac{1}{2}+\delta_\gamma}\psi_{m-1} \der G}_{L^2([0,T];L^2_vL^2_x)}^2].
\end{multline*}
\end{proof}

\begin{proposition}\label{p.B_2}
For $\mathcal{B}_2^{\alpha,\beta}$ in \eref{b2} and $T\in [0,T_0)$ we have the estimate,
$$\mathcal{B}_2^{\alpha,\beta}\lesssim R^{\gamma-1-\delta_\gamma}\norm{\h}_{\H}\int_0^T\norm{G}_{Y^{m,1+2\delta_\gamma}_{1,\Omega_R}}^2.$$
\end{proposition}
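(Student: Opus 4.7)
The argument closely parallels that of \pref{B_1}, but is in fact simpler: the two velocity derivatives are already split between two separate factors of $\psi_m$, so no Young/Cauchy--Schwarz step is needed at the end and only a single norm appears on the right-hand side.

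The plan is as follows. First I would establish the pointwise cutoff bound
\begin{equation*}
|\partial_{v_l}\psi_m\,\partial_{v_k}\psi_m|\lesssim R^{-2}\psi_{m-1}^2,
\end{equation*}
which follows from the chain rule ($|\partial_v\psi_m|\lesssim R^{-1}$, since $\psi_m(v)=\psi(v/(R/2^m))$ and $m\leq 10$) together with the fact that $\supp\partial_v\psi_m$ lies inside the set $\{\psi_{m-1}\equiv 1\}$. Combining this with \eref{L_inf_a}, which gives $|\a[h]|\lesssim\jap{v}^{2+\gamma}\norm{\h}_{Y^0_v}$, and with $\jap{v}\leq 2R$ on $\Omega_R$, I would split $\jap{v}^{2+\gamma}\lesssim R^{1+\gamma-2\delta_\gamma}\jap{v}^{1+2\delta_\gamma}$ so that the $R$ factors coming from the two sources combine to $R^{-2}\cdot R^{1+\gamma-2\delta_\gamma}=R^{\gamma-1-2\delta_\gamma}\leq R^{\gamma-1-\delta_\gamma}$ for $R\geq 1$ and $\delta_\gamma\geq 0$.

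At this stage the integrand is bounded by $R^{\gamma-1-\delta_\gamma}\,\jap{v}^{2\omega_{\alpha,\beta}}\jap{v}^{1+2\delta_\gamma}\psi_{m-1}^2(\der G)^2\norm{\h}_{Y^0_v}$. I would then pull $\norm{\h}_{Y^0_v}$ out in $L^\infty_x$ and pass to $L^2_x$ via the Sobolev embedding $H^2_x\hookrightarrow L^\infty_x$, absorbing the two extra spatial derivatives into $\norm{\h}_{\H}$, exactly as in \pref{B_1}. The remaining integral is, by the definition given in \sref{errors}, precisely $\int_0^T\norm{G}^2_{Y^{m,1+2\delta_\gamma}_{1,\Omega_R}}\,dt$. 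The $\eps\delta_{ij}$ contribution is harmless: it carries no velocity growth, so repeating the computation with $\jap{v}^{2+\gamma}$ replaced by $\eps$ produces a strictly smaller power of $R$ and is dominated by the term above.

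There is no real obstacle beyond \pref{B_1}; the only mild subtlety is the bookkeeping that produces the single factor $\psi_{m-1}^2$ (hence the $l=1$ index in the output norm) directly from $\partial_{v_l}\psi_m\,\partial_{v_k}\psi_m$, so that, in contrast with \pref{B_1}, no Young's inequality is required to separate $\psi_m\,\partial^2\psi_m$ into a sum of two terms.
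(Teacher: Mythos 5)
Your proposal is correct and follows essentially the same route as the paper: bound $\partial_{v_k}\psi_m\,\partial_{v_l}\psi_m\lesssim R^{-2}\psi_{m-1}^2$, use \eref{L_inf_a} for $\a[h]$, trade $\jap{v}^{1+\gamma-2\delta_\gamma}$ for a power of $R$ via $\jap{v}\lesssim R$, then put the remaining $\norm{\h}_{Y^0_v}$ into $L^\infty_x$ and pass to $L^2_x$ by Sobolev. The only cosmetic difference is that you split the exponent to produce $R^{\gamma-1-2\delta_\gamma}\leq R^{\gamma-1-\delta_\gamma}$, whereas the paper's intermediate line carries $\jap{v}^{1+\delta_\gamma}$ before arriving at the same final weight $\jap{v}^{1+2\delta_\gamma}$; and you treat the $\eps\delta_{ij}$ term explicitly while the paper dismisses it with a blanket remark preceding the boundary-term propositions.
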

\begin{proof}

Using \eref{L_inf_a}, $\part_{v_i}\psi_m\lesssim R^{-1}\psi_{m-1}$ and that $\jap{v}\leq 2R$ we get,
\begin{align*}
\int_0^T \int \int \jap{v}^{2\omega_{\alpha,\beta}}&\part_{v_k}\psi_m \part_{v_l}\psi_m(\a[h])(\der G)^2 \d v \d x\d t\\
&\lesssim R^{\gamma-1-\delta_\gamma}\int_0^T \int \int \jap{v}^{2\omega_{\alpha,\beta}}\jap{v}^{1+\delta_\gamma}\norm{\h}_{Y^0_v}\psi_{m-1}^2(\der G)^2 \d v \d x \d t\\
&\lesssim R^{\gamma-1-\delta_\gamma}\norm{\h}_{\E}\norm{\jap{v}^{\omega_{\alpha,\beta}}\jap{v}^{\frac{1}{2}+\delta_\gamma}\der G}_{L^2([0,T];L^2_vL^2_x)}^2.
\end{align*}
\end{proof}
\begin{proposition}\label{p.B_3}
For $\mathcal{B}_3^{\alpha,\beta}$ in \eref{b3} and $T\in [0,T_0)$ we have the estimate,
$$\mathcal{B}_3^{\alpha,\beta}\lesssim R^{\gamma-1-\delta_\gamma}\norm{\h}_{\H}[\int_0^T\norm{G}_{Y^{m,1}_{0,\Omega_R}}^2+\int_0^T\norm{G}_{Y^{m,1+2\delta_\gamma}_{1,\Omega_R}}^2].$$
\end{proposition}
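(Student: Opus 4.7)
The plan is to follow the template of Propositions \ref{p.B_1} and \ref{p.B_2}. The integrand in $\mathcal B_3^{\alpha,\beta}$ contains $\a[h]$ together with exactly one derivative of the cutoff, so the expected $R$--decay should come from two sources: the single factor $R^{-1}$ produced by $|\part_{v_l}\psi_m|\lesssim R^{-1}\psi_{m-1}$, and an additional factor $R^{\gamma-\delta_\gamma}$ extracted from the velocity weight of $\a[h]$. The resulting product $\psi_m\psi_{m-1}$ (rather than $\psi_m^2$ as in \pref{B_2} or $\psi_m\psi_{m-1}$ as in \pref{B_1}) forces the same Young's inequality split that appears in \pref{B_1}, yielding the two squared norms on the right.

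Concretely, using the $L^\infty_v$ bound \eref{L_inf_a}, the cutoff bound $|\part_{v_l}\psi_m|\lesssim R^{-1}\psi_{m-1}$, and $\jap{v}\lesssim R$ on the support of $\psi_m$, I would first estimate
\begin{equation*}
\jap{v}^{2\omega_{\alpha,\beta}-1}\psi_m|\part_{v_l}\psi_m|\,\a[h]
\;\lesssim\; R^{-1}\,\jap{v}^{2\omega_{\alpha,\beta}+1+\gamma}\norm{\h}_{Y^0_v}\,\psi_m\psi_{m-1}.
\end{equation*}
The key algebraic step is then to write $\jap{v}^{1+\gamma}=\jap{v}^{1+\delta_\gamma}\cdot\jap{v}^{\gamma-\delta_\gamma}$ and replace the second factor by $R^{\gamma-\delta_\gamma}$, which is legitimate because $\gamma-\delta_\gamma\geq 0$ in both regimes (either $\gamma\in[0,1)$ with $\delta_\gamma=0$, or $\gamma=1$ with $\delta_\gamma=\eta<1$). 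This produces the overall prefactor $R^{\gamma-1-\delta_\gamma}$ together with the residual weight $\jap{v}^{2\omega_{\alpha,\beta}+1+\delta_\gamma}$.

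Next, as in \pref{B_1}, I would apply the Sobolev embedding in $x$ to pass $\norm{\h}_{Y^0_v}$ from $L^2_x$ to $L^\infty_x$ (at the cost of taking two extra $x$--derivatives, absorbed into the $\H$--norm), and then split the product $\psi_m\psi_{m-1}\jap{v}^{1+\delta_\gamma}$ by Young's inequality as
\begin{equation*}
\psi_m\jap{v}^{\frac12}\cdot\psi_{m-1}\jap{v}^{\frac12+\delta_\gamma}(\der G)^2
\;\leq\; \tfrac12\psi_m^2\jap{v}(\der G)^2+\tfrac12\psi_{m-1}^2\jap{v}^{1+2\delta_\gamma}(\der G)^2,
\end{equation*}
producing precisely the two $Y^{m,\cdot}_{\cdot,\Omega_R}$--norms appearing in the statement. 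The $\eps\delta_{ij}$ contribution is strictly easier since it has no velocity weight: the factor $R^{-1}$ alone suffices, and I would simply note $R^{-1}\leq R^{\gamma-1-\delta_\gamma}R^{\delta_\gamma-\gamma}$ with $R\geq 1$ (or absorb it into the first norm using the $\jap{v}^{1/2}$ weight), which is negligible against the main term.

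The only real subtlety, mirroring what drives \ssref{hie} and \pref{B_1}, is ensuring that $R^{\gamma-1-\delta_\gamma}$ actually decays in $R$: this fails precisely at $\gamma=1$ if one insists on $\delta_\gamma=0$, which is exactly why the hierarchy is shifted by $\eta>0$ for hard potentials. Given this, there is no genuine obstacle; the proof is essentially a repeat of \pref{B_1} with $\part^2\psi_m$ replaced by $\part\psi_m$ and $R^{-2}$ replaced by $R^{-1}$.
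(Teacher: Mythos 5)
Your proof is correct and follows essentially the same route as the paper, which simply invokes the argument of Proposition \ref{p.B_1} with the single cutoff derivative replaced by $R^{-1}\psi_{m-1}$; your breakdown of the weight $\jap{v}^{1+\gamma}=\jap{v}^{1+\delta_\gamma}\jap{v}^{\gamma-\delta_\gamma}$, the extraction of $R^{\gamma-\delta_\gamma}$, and the Young split of $\psi_m\psi_{m-1}$ match the paper's computation exactly. (Minor typo: in your parenthetical the contrast should read $\psi_{m-1}^2$ as in \pref{B_2}, not $\psi_m^2$; the $\eps\delta_{ij}$ term is explicitly set aside by the paper in the remark preceding these propositions, so your extra sentence about it is harmless but unnecessary.)
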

\begin{proof}
Proceeding as in \pref{B_1} we get,
\begin{multline*}
\int_0^T \int \int \jap{v}^{2\omega_{\alpha,\beta}-1}\psi_m \part_{v_l}\psi_m(\a[h])(\der G)^2 \d v \d x\d t\\
\lesssim R^{\gamma-1-\delta_\gamma}\int_0^T \int \int \jap{v}^{2\omega_{\alpha,\beta}}\jap{v}^{1+\delta_\gamma}\norm{\h}_{Y^0_v}\psi_m\psi_{m-1}(\der G)^2 \d v \d x \d t.
\end{multline*}
Which again as in \pref{B_1} implies the required bound.
\end{proof}
\begin{proposition}\label{p.B_4}
For $\mathcal{B}_4^{\alpha,\beta}$ in \eref{b4} and $T\in [0,T_0)$ we have the estimate,
$$\mathcal{B}_4^{\alpha,\beta}\lesssim R^{\gamma-1-\delta_\gamma}\norm{\h}_{\H}[\int_0^T\norm{G}_{Y^{m,1}_{0,\Omega_R}}^2+\int_0^T\norm{G}_{Y^{m,1+2\delta_\gamma}_{1,\Omega_R}}^2].$$
\end{proposition}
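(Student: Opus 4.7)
The plan is to mirror the strategy used for $\mathcal{B}_1^{\alpha,\beta}$ and $\mathcal{B}_3^{\alpha,\beta}$, extracting the small factor $R^{\gamma-1-\delta_\gamma}$ from the combination $|\part_{v_l}\psi_m|\lesssim R^{-1}\psi_{m-1}$ together with $\jap{v}\lesssim R$ on $\supp\psi_m$. The only new twist compared to \pref{B_1}--\pref{B_3} is that one derivative now lands on $\a[h]$, so I would split the sum defining $\mathcal{B}_4^{\alpha,\beta}$ into two cases according to whether this derivative is spatial ($|\alpha'|=1,|\beta'|=0$) or a velocity one ($|\alpha'|=0,|\beta'|=1$). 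In the first case I would invoke \eref{L_inf_a} to get $\norm{\jap{v}^{-2-\gamma}\partial^{\alpha'}_x\a[h]}_{L^\infty_v}\lesssim \norm{\h}_{Y^1_v}$, and in the second case \eref{L_inf_a_1_der} gives $\norm{\jap{v}^{-1-\gamma}\partial_{v_l}\a[h]}_{L^\infty_v}\lesssim \norm{\h}_{Y^0_v}$.

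The key bookkeeping is to compare $2\omega_{\alpha,\beta}$ against $\omega_{\alpha'',\beta''}+\omega_{\alpha''',\beta'''}$ given the constraints $|\alpha''|+|\beta''|=|\alpha|+|\beta|=m$ and the imposed totals $|\alpha'''|+|\alpha''|+|\alpha'|=2|\alpha|$, $|\beta'''|+|\beta''|+|\beta'|=2|\beta|+1$. A direct computation yields
\begin{equation*}
\omega_{\alpha'',\beta''}+\omega_{\alpha''',\beta'''}=2\omega_{\alpha,\beta}+\bigl(\tfrac{3}{2}+\delta_\gamma\bigr)|\alpha'|-\bigl(\tfrac{1}{2}+\delta_\gamma\bigr)(1-|\beta'|-|\alpha'|)\cdot(\text{adjustment}),
\end{equation*}
and in concrete terms this gives $2\omega_{\alpha,\beta}+1$ in the spatial subcase and $2\omega_{\alpha,\beta}$ in the velocity subcase. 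In each subcase, the pointwise bound contributes $\jap{v}^{2+\gamma}$ or $\jap{v}^{1+\gamma}$ respectively, so combining the powers shows that $\jap{v}^{2\omega_{\alpha,\beta}+\text{(pointwise weight)}}$ equals $\jap{v}^{\omega_{\alpha'',\beta''}+\frac{1}{2}}\cdot\jap{v}^{\omega_{\alpha''',\beta'''}+\frac{1}{2}+\delta_\gamma}\cdot\jap{v}^{\gamma-\delta_\gamma}$ in both subcases.

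I would then convert the residual $\jap{v}^{\gamma-\delta_\gamma}$ into $R^{\gamma-\delta_\gamma}$ using $\jap{v}\lesssim R$ on $\supp\psi_m$, absorb $|\part_{v_l}\psi_m|$ via $R^{-1}\psi_{m-1}$, and land on
\begin{equation*}
R^{\gamma-1-\delta_\gamma}\int_0^T\!\!\int\!\!\int \norm{\h}_{Y^k_v}\,\bigl(\jap{v}^{\omega_{\alpha''',\beta'''}+\frac{1}{2}+\delta_\gamma}\psi_{m-1}|\derv{'''}{'''}G|\bigr)\bigl(\jap{v}^{\omega_{\alpha'',\beta''}+\frac{1}{2}}\psi_m|\derv{''}{''}G|\bigr)\,\d v\,\d x\,\d t,
\end{equation*}
with $k\in\{0,1\}$. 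From here Sobolev embedding in $x$ (as in \lref{special_holder_der_leq_8}) lets me pull $\norm{\h}_{L^\infty_x Y^k_v}\lesssim \norm{\h}_{\H}$ out of the integral, and Cauchy--Schwarz followed by Young's inequality bounds the remainder by the sum of the squared $L^2_{t,x,v}$ norms of the two factors. Matching these against the definition \eref{ball_norm} places the $\psi_{m-1}$ factor into $\norm{G}_{Y^{m,1+2\delta_\gamma}_{1,\Omega_R}}^2$ and the $\psi_m$ factor into $\norm{G}_{Y^{m,1}_{0,\Omega_R}}^2$. Summing over the finitely many admissible multi-indices yields the claim.

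The main obstacle I expect is purely combinatorial: making sure the weight identity $\omega_{\alpha'',\beta''}+\omega_{\alpha''',\beta'''}=2\omega_{\alpha,\beta}+|\alpha'|$ really holds in both subcases and that, once the pointwise powers from \eref{L_inf_a}/\eref{L_inf_a_1_der} are added, exactly $\gamma-\delta_\gamma$ extra weight remains to trade for $R^{\gamma-\delta_\gamma}$. No new analytic ingredient is needed beyond what is already in \pref{pointwise_estimates_a}, \lref{L_inf_a}, and the Sobolev-embedding technique of \lref{special_holder_der_leq_8}.
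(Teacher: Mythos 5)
Your proposal is correct and follows the same route as the paper: split according to whether the single derivative on $\a[h]$ is spatial ($|\alpha'|=1$, using \eref{L_inf_a}) or a velocity derivative ($|\beta'|=1$, using \eref{L_inf_a_1_der}), verify the weight identity $\omega_{\alpha'',\beta''}+\omega_{\alpha''',\beta'''}=2\omega_{\alpha,\beta}+|\alpha'|$, harvest $R^{\gamma-1-\delta_\gamma}$ from $|\part_{v_l}\psi_m|\lesssim R^{-1}\psi_{m-1}$ together with $\jap{v}\lesssim R$, and close with Sobolev embedding plus Cauchy--Schwarz and Young. The only differences are cosmetic — your intermediate display contains a malformed placeholder ``(adjustment)'' (the clean identity is $(\tfrac{3}{2}+\delta_\gamma)|\alpha'|-(\tfrac{1}{2}+\delta_\gamma)(1-|\beta'|)$, consistent with your stated concrete values), and you attach $\psi_{m-1}$ with the $\jap{v}^{1/2+\delta_\gamma}$ weight to $\derv{'''}{'''}G$ rather than $\derv{''}{''}G$, which is an equally valid allocation since both have exactly $m$ derivatives.
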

\begin{proof}
Working with a typical term and assuming that $\alpha',\beta',\alpha'',\beta'',\alpha''',\beta'''$ satisy the required conditions, we break the proof into two cases,
\begin{enumerate}
\item \emph{Case 1:} $|\beta'|=1$.\\
In this case we have that $\omega_{\alpha'',\beta''}+\omega_{\alpha''',\beta'''}=2\omega_{\alpha,\beta}$.

Using this, \eref{L_inf_a_1_der} and that $\part_{v_l}\psi_m\lesssim R^{-1}\psi_{m-1}$ we get,
\begin{multline*}
\int_0^T \int \int \jap{v}^{2\omega_{\alpha,\beta}}\psi_m \part_{v_l}\psi_m\derv{'''}{'''} G(\derv{'}{'}\a[h])\derv{''}{''} G\d v \d x\d t\\
\lesssim R^{\gamma-1-\delta_\gamma}\int_0^T \int \int \jap{v}^{\omega_{\alpha'',\beta''}+\omega_{\alpha''',\beta'''}}\jap{v}^{1+\delta_\gamma}\norm{\h}_{Y^0_v}\psi_m\psi_{m-1}\derv{'''}{'''} G\derv{''}{''} G \d v \d x \d t.
\end{multline*}
Using Sobolev embedding and Cauchy Schwartz we get,
\begin{multline*}
\int_0^T \int \int \jap{v}^{2\omega_{\alpha,\beta}}\psi_m \part_{v_l}\psi_m\derv{'''}{'''} G(\derv{'}{'}\a[h])\derv{''}{''} G \d v \d x \d t\\
\lesssim R^{\gamma-1-\delta_\gamma}\norm{\h}_{\H}[\norm{\jap{v}^{\omega_{\alpha''',\beta'''}}\jap{v}^{\frac{1}{2}}\psi_m \derv{'''}{'''} G}_{L^2([0,T];L^2_vL^2_x)}^2+\norm{\jap{v}^{\omega_{\alpha'',\beta''}}\jap{v}^{\frac{1}{2}+\delta_\gamma}\psi_{m-1} \derv{''}{''} G}_{L^2([0,T];L^2_vL^2_x)}^2].
\end{multline*}

\item \emph{Case 2:} $|\alpha'|=1$.\\
Using \eref{L_inf_a} and $\part_{v_l}\psi_m\lesssim R^{-1}\psi_{m-1}$ we get,
\begin{multline*}
\int_0^T \int \int \jap{v}^{2\omega_{\alpha,\beta}}\psi_m \part_{v_l}\psi_m\derv{'''}{'''} G(\derv{'}{'}\a[h])\derv{''}{''} G\d v \d x\d t\\
\lesssim R^{\gamma-1-\delta_\gamma}\int_0^T \int \int \jap{v}^{2\omega_{\alpha,\beta}+1}\jap{v}^{1+\delta_\gamma}\norm{\h}_{Y^0_v}\psi_m\psi_{m-1}\derv{'''}{'''} G\derv{''}{''} G \d v \d x \d t.
\end{multline*}
But since $|\alpha'|=1$,we have in this case $\omega_{\alpha'',\beta''}+\omega_{\alpha''',\beta'''}=2\omega_{\alpha,\beta}+1$.\\
Now proceeding in the same way as in the case above we get the required result.
\end{enumerate}
\end{proof}
\begin{proposition}\label{p.B_5}
For $\mathcal{B}_5^{\alpha,\beta}$ in \eref{b5} and $T\in [0,T_0)$ we have the estimate,
$$\mathcal{B}_5^{\alpha,\beta}\lesssim R^{\gamma-1-\delta_\gamma}\norm{\h}_{\H}[\int_0^T\norm{G}_{Y^{m,1}_{0,\Omega_R}}^2+\int_0^T\norm{G}_{Y^{m,1+2\delta_\gamma}_{1,\Omega_R}}^2].$$
\end{proposition}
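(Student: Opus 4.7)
The plan is to mimic exactly the strategy already used for $\mathcal{B}_1^{\alpha,\beta}$, $\mathcal{B}_2^{\alpha,\beta}$ and $\mathcal{B}_3^{\alpha,\beta}$ in Propositions \ref{p.B_1}--\ref{p.B_3}, since the structure of $\mathcal{B}_5^{\alpha,\beta}$ is identical: it features a single derivative on $\psi_m$, no derivatives hitting the coefficient, and the coefficient $(\a[h]+\eps\delta_{ij})\frac{v_i}{\jap{v}}$ carries $\jap{v}$-growth comparable to that of $\a[h]$ (one power less because of the division by $\jap{v}$, offset by one power from $v_i$).

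First I would split off the $\eps$-contribution. Since $\left|\eps\delta_{ij}\frac{v_i}{\jap{v}}\right|\lesssim \eps$ and $|\part_{v_l}\psi_m|\lesssim R^{-1}\psi_{m-1}$, the $\eps$ term is bounded by $\eps R^{-1}\int \jap{v}^{2\omega_{\alpha,\beta}}\psi_m\psi_{m-1}(\der G)^2$, which after Young's inequality is dominated by the two terms on the right-hand side with an extra $R^{-1}$ (one even picks up $R^{-1-\delta_\gamma}$ after accounting for the $\jap{v}\lesssim R$ factor used elsewhere), so this is automatically absorbed.

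For the main term, I would apply \eref{L_inf_a_v} in the case $|\alpha|=|\beta|=0$ to get the pointwise bound $\left|\a[h]\frac{v_i}{\jap{v}}\right|\lesssim \jap{v}^{1+\gamma}\norm{\h}_{Y^0_v}(t,x)$. Combining this with $|\part_{v_l}\psi_m|\lesssim R^{-1}\psi_{m-1}$ and using $\jap{v}\lesssim R$ on the support of $\psi_m$ so that $\jap{v}^{\gamma-\delta_\gamma}\cdot R^{-1}\lesssim R^{\gamma-1-\delta_\gamma}$, I obtain
\begin{align*}
\int_0^T\!\!\int\!\!\int \jap{v}^{2\omega_{\alpha,\beta}}\psi_m\,\part_{v_l}\psi_m\Bigl(\a[h]\tfrac{v_i}{\jap{v}}\Bigr)(\der G)^2\,\d v\,\d x\,\d t \lesssim R^{\gamma-1-\delta_\gamma}\!\!\int_0^T\!\!\int\!\!\int \jap{v}^{2\omega_{\alpha,\beta}}\jap{v}^{1+\delta_\gamma}\norm{\h}_{Y^0_v}\psi_m\psi_{m-1}(\der G)^2\,\d v\,\d x\,\d t,
\end{align*}
exactly as in \pref{B_1} and \pref{B_3}.

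The remainder is entirely routine. I would apply Young's inequality to split $\jap{v}^{1+\delta_\gamma}\psi_m\psi_{m-1} = \jap{v}^{1/2}\psi_m \cdot \jap{v}^{1/2+\delta_\gamma}\psi_{m-1}$ as a product, then use Sobolev embedding in $x$ (costing two spatial derivatives) to pass $\norm{\h}_{Y^0_v}$ from $L^2_x$ to $L^\infty_x$, giving the bound $\norm{\h}_{\H}$. This yields precisely
$$\mathcal{B}_5^{\alpha,\beta}\lesssim R^{\gamma-1-\delta_\gamma}\norm{\h}_{\H}\Bigl[\norm{\jap{v}^{\omega_{\alpha,\beta}}\jap{v}^{\frac{1}{2}}\psi_m \der G}_{L^2([0,T];L^2_xL^2_v)}^2 + \norm{\jap{v}^{\omega_{\alpha,\beta}}\jap{v}^{\frac{1}{2}+\delta_\gamma}\psi_{m-1}\der G}_{L^2([0,T];L^2_xL^2_v)}^2\Bigr],$$
which is exactly the claim. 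No new obstacle arises here; the sole subtle point is the same book-keeping with the exponent $\gamma-1-\delta_\gamma$ that was used in \pref{B_1}--\pref{B_4}, which guarantees smallness in $R$ for $\gamma\in[0,1)$ (where $\delta_\gamma=0$, giving $\gamma-1<0$) and also for $\gamma=1$ (where $\delta_\gamma=\eta>0$, giving $-\eta<0$).
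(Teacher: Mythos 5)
Your proposal is correct and takes essentially the same approach as the paper: the paper also uses $|\a[h]\frac{v_i}{\jap{v}}|\lesssim\jap{v}^{1+\gamma}\norm{\h}_{Y^0_v}$ (via \eref{pw_bound_a_v}, equivalently \eref{L_inf_a_v}), combines it with $|\part_{v_l}\psi_m|\lesssim R^{-1}\psi_{m-1}$ and $\jap{v}\lesssim R$ to extract the factor $R^{\gamma-1-\delta_\gamma}$, and then cites the same Sobolev-embedding/Young step as in \pref{B_1}. The only cosmetic difference is that you spell out the $\eps$-term, which the paper dispenses with via the remark preceding \pref{B_1}.
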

\begin{proof}
Using \eref{pw_bound_a_v} that $\part_{v_l}\psi_m\lesssim R^{-1}\psi_{m-1}$ we get,
\begin{multline*}
d(t)\int_0^T \int \int \jap{v}^{2\omega_{\alpha,\beta}}\psi_m \part_{v_l}\psi_m\left(\a[h] \frac{v_i}{\jap{v}}\right)(\der G)^2\d v \d x\d t\\
\lesssim  R^{\gamma-1-\delta_\gamma}\int_0^T \int \int \jap{v}^{2\omega_{\alpha,\beta}}\jap{v}^{1+\delta_\gamma}\norm{\h}_{Y^0_v}\psi_m\psi_{m-1}(\der G)^2 \d v \d x \d t.
\end{multline*}
Now proceeding as in \pref{B_1} gives us the desired result.
\end{proof}
\section{Putting everything together}\label{s.put}
By choosing $\eps$ small enough and using the estimates from last section and \eref{main_energy_estimate} we get,
\begin{equation*}
\begin{split}
&\norm{\jap{v}^{\omega_{\alpha,\beta}}\der G \psi_m}_{L^2_vL^2_x}^2(T)+\kappa \norm{\jap{v}^{\omega_{\alpha,\beta}}\jap{v}^{\frac{1}{2}}\der G \psi_m}_{L^2([0,T];L^2_vL^2_x)}^2\\
&\quad+\eps\norm{\jap{v}^{\omega_{\alpha,\beta}}\part_{x_i}\der G \psi_m}_{L^2([0,T];L^2_vL^2_x)}^2+\eps\norm{\jap{v}^{\omega_{\alpha,\beta}}\part_{v_i}\der G \psi_m}_{L^2([0,T];L^2_vL^2_x)}^2\\
&\leq \norm{\jap{v}^{\omega_{\alpha,\beta}}\der g_{\ini}}_{L^2_vL^2_x}^2+C(d_0,\gamma,m)[\norm{\h}_{\H}+R^{\gamma-1-\delta_\gamma}]\int_0^T\norm{G}_{Y^{m,1}_{0,\Omega_R}}^2\\
&\quad+C(d_0,\gamma,m)\kappa\int_0^T\norm{G}_{Y^{m,0}_{0,\Omega_R}}^2+C(d_0,\gamma,m)R^{\gamma-1-\delta_\gamma}\norm{\h}_{\H}\int_0^T\norm{G}_{Y^{m,1+2\delta_\gamma}_{1,\Omega_R}}^2.
\end{split}
\end{equation*}
\begin{remark}
For $m=0$ the boundary terms are zero since $\psi_0=1$ on $\Omega_R$.

Moreover, the term coming from the viscosity has an extra derivative, so even when the derivative is $\part_v$ we have that $\omega_{\alpha,\beta'}=\omega_{\alpha,\beta}-\frac{1}{2}-\delta_\gamma$. Thus we bound this term with atleast an extra $\jap{v}^{1+2\delta_\gamma}$ weight in $L^2([0,T];L^2_vL^2_x)$.

\end{remark}
Summing over all $\alpha,\beta$ such that $|\alpha|+|\beta|\leq m$ and noting that $\psi_m\leq \psi_i$ for $i\leq m$, we get,
\begin{equation*}
\begin{split}
&\norm{G}_{Y^{m,0}_{0,\Omega_R}}^2(T)+\kappa\int_0^T \norm{G}_{Y^{m,1}_{0,\Omega_R}}^2(t)\d t+\eps \int_0^T \norm{G}_{Y^{m+1,1+2\delta_\gamma}_{1,\Omega_R}}^2(t)\d t\\
&\leq \norm{g_{\ini}}_{Y_{x,v}}^2+C(d_0,\gamma,m)[\norm{\h}_{\H}+R^{\gamma-1-\delta_\gamma}]\int_0^T\norm{G}_{Y^{m,1}_{0,\Omega_R}}^2\d t\\
&\quad+C(d_0,\gamma)\kappa\int_0^T\norm{G}_{Y^{m,0}_{0,\Omega_R}}^2\d t+C(d_0,\gamma,m)R^{\gamma-1-\delta_\gamma}\norm{\h}_{\H}\int_0^T\norm{G}_{Y^{m,1+2\delta_\gamma}_{1,\Omega_R}}^2 \d t.
\end{split}
\end{equation*}

We can absorb the second term on the left hand side by $\kappa \norm{\jap{v}^{\omega_{\alpha,\beta}}\jap{v}^{\frac{1}{2}}\der G \psi_i}_{L^2([0,T];L^2_vL^2_x)}^2$ by choosing $R$ large enough, and noting that the constant is independent of $\kappa$. That is we can apriori make $\kappa$ large enough to absorb this term, although this choice must depend on the $M_h$ from \lref{lin_eq}, since $\norm{\h}_{\H}$ does. More concretely, let $\underline{\kappa=C(d_0,\gamma)M_h}$.\\
\textbf{Since the dependence of $\kappa$ on $M_h$ is linear, we can safely substitute the dependence of constants on $\kappa$ by $M_h$}.
\begin{equation}
\begin{split}
&\norm{G}_{Y^{m,0}_{0,\Omega_R}}^2(T)+\int_0^T \norm{G}_{Y^{m,1}_{0,\Omega_R}}^2\d t+\eps \int_0^T \norm{G}^2_{Y^{m+1,1+2\delta_\gamma}_{1,\Omega_R}}(t)\d t\\
& \leq \norm{g_{\ini}}_{Y_{x,v}}^2+C(d_0,\gamma)M_h\int_0^T \norm{G}^2_{Y^{m,0}_{0,\Omega_R}}(t) \d t+C(d_0,\gamma,m)R^{\gamma-1-\delta_\gamma}\norm{\h}_{\H}\int_0^T \norm{G}^2_{Y^{m,1+2\delta_\gamma}_{1,\Omega_R}}(t)\d t.
\end{split}
\end{equation}

For $m=0$, the term $R^{\gamma-1-\delta_\gamma}\int_0^T \norm{G}^2_{Y^{m,1+2\delta_\gamma}_{1,\Omega_R}}(t)\d t=0$ and so our goal now is to prove that for $R$ large enough we have the estimate,
\begin{equation}\label{e.small} 
C(d_0,\gamma,m)R^{\gamma-1-\delta_\gamma}\norm{\h}_{\H}\int_0^T \norm{G}^2_{Y^{m,1+2\delta_\gamma}_{1,\Omega_R}}(t)\d t\leq \eps.
\end{equation}
 for all $m\leq 10$.

By induction, assume \eref{small} is true for $m-1$, then we have,
\begin{equation*}
\begin{split}
&\norm{G}^2_{Y^{m-1,0}_{0,\Omega_R}}(T)+\int_0^T \norm{G}_{Y^{m,1}_{0,\Omega_R}}^2\d t+\eps \int_0^T \norm{G}^2_{Y^{m,1+2\delta_\gamma}_{1,\Omega_R}}(t)\d t\\
& \leq \norm{g_{\ini}}^2_{\E}+C(d_0,\gamma)M_h\int_0^T \norm{G}^2_{Y^{m-1,0}_{0,\Omega_R}}(t) \d t+\eps.
\end{split}
\end{equation*}
Using Gr\"{o}nwall's inequality we get,
$$\norm{G}^2_{Y^{m-1,0}_{0,\Omega_R}}(T)+\int_0^T \norm{G}_{Y^{m,1}_{0,\Omega_R}}^2\d t+\eps \int_0^T \norm{G}^2_{Y^{m,1+2\delta_\gamma}_{1,\Omega_R}}(t)\d t\leq (\norm{g_{\ini}}^2_{Y_{x,v}}+\eps)\exp(C(d,\gamma)M_hT).$$

The bound on the second term implies \eref{small} for all $m\leq 10$ when $$R\geq 2\left(\frac{1}{\eps^2}(\norm{g_{\ini}}^2_{Y_{x,v}}+\eps)\exp(C(d_0,\gamma)M_hT)\right)^\frac{1}{1+\delta_\gamma-\gamma}$$
\begin{proof}[Proof of \lref{lin_eq}]
Takin $\sup$ over $t\in [0,T]$ we get,
\begin{equation}\label{e.gron}
\norm{G_R}_{E_{T,\Omega_R}}^2\leq (\norm{g_{\ini}}^2_{Y_{x,v}}+\eps)\exp(C(d_0,\gamma)M_hT ).
\end{equation}
Now consider the sequence of functions $\{G_K\}$ for $K\in \N$ of solutions to \eref{lin_eq_visc_landau} on $\Omega_K$ and with $\eps=(\ln K)^{-1}$. Note that this choice of $\eps$ still allows condition \eref{small} to hold, for sufficiently large $K$.

The bound \eref{gron} holds for all such $G_K$. Fix an $L>0$ and remember that $\chi_L$ be a smooth cut-off function on $\R^6$, supported on $\Omega_{L-1}$, equal to $1$ in $\Omega_{L-2}$, radially symmetric, monotone and such that for $|\alpha|+|\beta|=n$, $|\der \chi_R|<2^n$.

For large enough $K$ we have that $\norm{\chi_L G_K}_{\E}$ is uniformly bounded in $K$ (we need $K$ to be large enough so that $\psi_{10}\geq \chi_L$ which implies $\norm{\chi_L G_K}_{\E}\leq \norm{G_K}_{E_{T,\Omega_K}}$). From the trivial bound , $\norm{\chi_L G_K}_{X_T}\leq \norm{\chi_L G_K}_{\E}$, we have that $\norm{\chi_L G_K}_{X_T}$ is also uniformly bounded in $K$. Therefore, we have a subsequence converging weakly to some limit $\bar G_{L}\in X_T$ supported on the ball of radius $L-1$. Note that $\bar G_L$ and $\bar G_{L'}$ are equal on the ball of radius $\min(L,L')$ and for all $t \in [0,T]$.

Now we use a diagonal argument to take $L$ to infinity and extract a subsequence (still denoted $\{G_K\}$) and a limit $\bar G\in X_T$ such that,
$$G_K \rightharpoonup \bar G \text{ in }X_T \text { on compact sets}.$$

\lref{L_inf_a} and \lref{L_inf_c} imply that $\a[h]_\eps \part^2_{v_iv_j}G_K \rightharpoonup \a[h] \part^2_{v_iv_j}\bar G$, $\cm[h_\eps]G_K\rightharpoonup \cm[h]\bar G$,\\
$\a[h_\eps]\frac{v_i}{\jap{v}}\part_{v_j}G_K\rightharpoonup \a[h]\frac{v_i}{\jap{v}}\part_{v_j}\bar G$ and $\a[h_\eps]\frac{v_iv_j}{\jap{v}^2}G_K\rightharpoonup \a[h]\frac{v_iv_j}{\jap{v}^2}\bar G$ weakly in the space $H^2_{x,v}$ as $\eps\to 0$.\\
Indeed for $|\alpha|+|\beta|\leq 2$, and for any $\phi \in L^\infty([0,T];H^2_{x,v})$, we have,
\begin{align*}
&\norm{\der \phi[\der(\a[h]\part^2_{v_iv_j}\bar G-\a[h_\eps]\part^2_{v_iv_j}G_K)]}_{L^\infty([0,T];L^1_xL^1_v)}\\
&\leq \norm{\der \phi\der(\a[h-h_\eps]\part^2_{v_iv_j}G_K)}_{L^\infty([0,T];L^1_xL^1_v)}\\
&\quad+\norm{\der \phi\der(\a[h]\part^2_{v_iv_j}(\bar G-G_K))}_{L^\infty([0,T];L^1_xL^1_v)},\\
&\leq \norm{\phi}_{L^\infty([0,T];H^2_{x,v})}\norm{\h-\h_\eps}_{H^2_T}\norm{G_k}_{X_T}\\
&\quad+\norm{\h}_{L^\infty([0,T];\H)}\jap{\frac{\phi}{\jap{v}^{20}},\frac{\part^{2}_{v_iv_j}(\bar G-G_K)}{\jap{v}^{\frac{1}{2}}}}_{X^2_T}.
\end{align*}

Clearly $\frac{\phi}{\jap{v}^{20}}\in X^2_T$ and $\frac{\part^{2}_{v_iv_j}(\bar G-G_K)}{\jap{v}^{\frac{1}{2}}} \in X^8_{T}$ and since $G_K\rightharpoonup \bar G$ in $X_T$ the second term goes to zero. The first term goes to zero since $\norm{G_k}_{X_T}$ is uniformly bounded and $\h_\eps\to \h$ in $H^2_T$.\\
Similar computations show the result for other coefficients.

 Since we control at least $2$ derivatives for $\bar G$, it has sufficient regularity to be a solution to the linearized equation \eref{lin_eq_landau} in $W^{1,\infty}_tH^2_{x,v}$ on all of $\R^6$. Thanks to \lref{lin_visc_landau_exis} $G_K$ is nonnegative and thus $\bar G$ is too. Moreover, $\bar G$ inherits the bound \eref{lin_exp_bound} from \eref{gron}. \\   
\end{proof}
We are now finally in good shape to use contraction to construct a solution to \eref{eq_for_g}.
\begin{theorem}\label{t.soln_for_g}
Assume $g_{\ini}\in Y_{x,v}$ and that $$\norm{g_{\ini}}_{Y_{x,v}}\leq M_0.$$

Then for some $T \in [0,T_0)$ depending on $M_0$, there exists a nonnegative $g \in \E$ solving \eref{eq_for_g} with $g(0,x,v)=g_{\ini}(x,v)$.

Moreover, we have uniqueness of $g$ in $\tilde E_T^4\cap C^0([0,T);\tilde Y^4_{x,v})$.
\end{theorem}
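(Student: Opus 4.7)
The plan is to solve the nonlinear problem \eref{eq_for_g} by a Picard iteration built on \lref{lin_eq}. Set $g^{(0)}(t,x,v):=g_{\ini}(x,v)$, and for $n\geq 0$ define $g^{(n+1)}$ to be the solution of \eref{lin_eq_landau} produced by \lref{lin_eq} with $h=h^{(n)}:=g^{(n)}e^{-d(t)\jap{v}}$ and with initial data $g_{\ini}$. As long as $\norm{g^{(n)}}_{\H}\leq M_h$ the lemma supplies $g^{(n+1)}$ together with the bound $\norm{g^{(n+1)}}^2_{\E}\leq M_0^2\exp(C(d_0,\gamma,\kappa,M_h)T)$ from \eref{lin_exp_bound}. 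Fix $M_h:=2M_0$ and then pick $T$ small enough (depending only on $d_0,\gamma$ and $M_0$) that $M_0^2\exp(C\,T)\leq 4M_0^2$; this inductively propagates $\norm{g^{(n)}}_{\H}\leq M_h$ and produces a sequence $\{g^{(n)}\}$ uniformly bounded in $\E$.

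Next I would show that $\{g^{(n)}\}$ is Cauchy in the weaker space $\tilde E_T^4$. Writing $G^{(n+1)}:=g^{(n+1)}-g^{(n)}$ and subtracting the two linearized equations, $G^{(n+1)}$ satisfies a linearized equation whose coefficients are built from $h^{(n)}$, whose initial datum is zero, and which carries an additional source of the schematic form
\begin{equation*}
\left(\a[h^{(n)}]-\a[h^{(n-1)}]\right)\part^2_{v_iv_j}g^{(n)}\;-\;\left(\cm[h^{(n)}]-\cm[h^{(n-1)}]\right)g^{(n)}\;+\;(\text{analogous transport and drift terms}).
\end{equation*}
Running the energy-estimate machinery of \sref{set_up}--\sref{errors} in the lower norm $\tilde E_T^4$ for $G^{(n+1)}$, every error term is bounded exactly as before except that one factor is now $G^{(n+1)}$ (carrying at most four derivatives) and the other is $G^{(n)}$ or $g^{(n)}$. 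Because $\tilde\omega_{\alpha,\beta}$ uses base exponent $10$ and only four derivatives are taken, the six spare derivatives let the Sobolev-embedding arguments of \lref{special_holder_der_leq_8}--\lref{special_holder_der_geq_9} go through with the uniform $\E$-bound on $g^{(n)}$ playing the role of $\norm{\h}_{\H}$. Shrinking $T$ further if necessary, this yields
\begin{equation*}
\norm{G^{(n+1)}}_{\tilde E_T^4}\leq \tfrac12\,\norm{G^{(n)}}_{\tilde E_T^4},
\end{equation*}
so that $\{g^{(n)}\}$ converges in $\tilde E_T^4$ to some limit $g$.

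Passing to the limit is then routine. The uniform $\E$-bound and weak-$*$ lower semicontinuity place $g\in\E$; strong convergence in $\tilde E_T^4$, together with the convolution bounds of \sref{coefficient_bounds}, suffices to pass to the limit in every term of \eref{lin_eq_landau} and identify $g$ as a solution of \eref{eq_for_g} with $g(0,x,v)=g_{\ini}$. Non-negativity passes to the limit from the $g^{(n)}$, which are non-negative by \lref{lin_eq}. Uniqueness in $\tilde E_T^4\cap C^0([0,T);\tilde Y^4_{x,v})$ is proved by the same argument applied to the difference of any two such solutions: this difference satisfies a linear equation with zero initial data whose coefficients depend only on the two given solutions (uniformly bounded in $\tilde E_T^4$), and a Gr\"onwall estimate in $\norm{\,\cdot\,}_{\tilde Y^4_{x,v}}(t)$ forces it to vanish.

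The principal obstacle, and the reason $\tilde E_T^4$ is calibrated the way it is, is the contraction step. The coefficient differences $\a[h^{(n)}]-\a[h^{(n-1)}]$ are genuinely nonlocal in $v$, and one must carefully distribute derivatives and velocity weights between the difference factor and the $g^{(n)}$ factor. The asymmetric hierarchy encoded in $\tilde\omega_{\alpha,\beta}$, together with the six-derivative gap between $\E$ and $\tilde E_T^4$, is precisely what lets the Sobolev trick from \lref{special_holder_der_leq_8}--\lref{special_holder_der_geq_9} push the top-order factor into $L^\infty_x$ at an acceptable cost in velocity weights --- the same mechanism that closed the linearized energy estimates in the first place.
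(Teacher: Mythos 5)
Your proposal is correct and follows essentially the same route as the paper: a Picard iteration closed by \lref{lin_eq}, a uniform $E_T$ bound for small $T$, a contraction in the weaker space $\tilde E_T^4$ exploiting both the derivative gap and the weight gap (base $10$ versus $20$) so that the uncontrollable source terms $\a[v^{n-1}]\partial^2_{v_iv_j}g^{n-1}$ etc.\ can be dumped entirely onto the $E_T$-controlled factor after Sobolev embedding, and uniqueness by the same Gr\"onwall argument. The only nuance you elide is that, in the contraction step, one estimates the source terms in $L^\infty_t$ (rather than splitting by Young's inequality as in \lref{special_holder_der_geq_9}) so as to extract a factor of $T$, since these terms cannot be absorbed on the left-hand side.
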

\begin{proof}
Define $g^0(t,x,v)=g_{\ini}(x,v)$ and, for $n\geq 1$, define the sequence $\{g_n\}$ recursively as the solution of 
\begin{equation} \label{e.iter_eq_for_g}
\begin{split}
\partial_tg^n+v_i\partial_{x_i}g^n+\kappa\jap{v}g^n&=\a [f^{n-1}]\partial^2_{v_iv_j}g^n-\cm [f^{n-1}]g^n-2d(t)\a [f^{n-1}]\frac{v_i}{\jap{v}}\partial_{v_j} g^n
\\&\quad -d(t)\left(\frac{\delta_{ij}}{\jap{v}}-(d(t)+1)\frac{v_iv_j}{\jap{v}^2}\right)\a [f^{n-1}]g^n.
\end{split}
\end{equation}
Where $f^n=e^{-d(t)\jap{v}}g^n$ and $g^n(0,x,v)=g_{\ini}(x,v)$.

This is exactly the linearized equation \eref{lin_eq_landau}. Then by \lref{lin_eq}, for any $T\in[0,T_0)$, each $g^n$ exists, is nonnegative, belongs to the space $\E$ and satisfies,
\begin{equation}\label{e.iter_exp_bound}
\norm{g^n}^2_{\E}\leq \norm{g_{\ini}}^2_{E_{x,v}}\exp(C(d_0,\gamma)\norm{g^{n-1}}_{Y_T}T)
\end{equation}
for some $C(d_0,\gamma)>0$ that is independent of $n$.

Assume by induction that for $n\geq 1$,
\begin{equation}\label{e.iter_bound}
\norm{g^{n-1}}_{\H}\leq 2M_0
\end{equation}
for some $T\in (0,T_0]$. This hypothesis, holds for $n=1$ by our assumption on $g_{\ini}$. Then \eref{iter_exp_bound} becomes $$\norm{g^n}^2_{\E}\leq M_0^2\exp(2C(d_0,\gamma)M_0T)$$
If we take $$T\leq \min\left(\frac{2\ln 2}{C(d_0,\gamma)M_0},T_0\right),$$
then $\norm{g^n}_{\E}\leq 2M_0$. Also note that $T$ is independent of $n$. Thus, \eref{iter_bound} is true for all $n\geq 1$.

Now we define $w^n=g^n-g^{n-1}$. Equation \eref{iter_eq_for_g} implies for $n\geq 2$,
\begin{equation} \label{e.iter_eq_for_w}
\begin{split}
\partial_tw^n+v_i\partial_{x_i}w^n+\kappa\jap{v}w^n&=\a [f^{n-1}]\partial^2_{v_iv_j}w^n-\cm [f^{n-1}]w^n-2d(t)\a [f^{n-1}]\frac{v_i}{\jap{v}}\partial_{v_j} w^n
\\&\quad-d(t)\left(\frac{\delta_{ij}}{\jap{v}}-(d(t)+1)\frac{v_iv_j}{\jap{v}^2}\right)\a [f^{n-1}]w^n\\
&\quad+\a [v^{n-1}]\partial^2_{v_iv_j}g^{n-1}-\cm [v^{n-1}]g^{n-1}-2d(t)\a [v^{n-1}]\frac{v_i}{\jap{v}}\partial_{v_j} g^{n-1}
\\&\quad-d(t)\left(\frac{\delta_{ij}}{\jap{v}}-(d(t)+1)\frac{v_iv_j}{\jap{v}^2}\right)\a [v^{n-1}]g^{n-1},
\end{split}
\end{equation}
with $w^n(0,x,v)=0$ and $v^n=w^ne^{-d(t)\jap{v}}$.

For all multi-indices with $|\alpha|+|\beta|\leq 4$, we differentiate the equation for $w^n$ by $\der$, multiply by $\jap{v}^{2\tilde{\omega}_{\alpha,\beta}}\der w^n$, and integrate in space and velocity (note that we are doing the contraction in a much weaker space). 

Here $\tilde{\omega}_{\alpha,\beta}=10-(\frac{3}{2}+\delta_\gamma)|\alpha|-(\frac{1}{2}+\delta_\gamma)|\beta|$ and the spaces with tilde are the associated energy spaces defined in the same way as $\E^k$. The need to change the heirarchy for the contraction is due to the presence of terms with $g^{n-1}$ outside the coefficients because we can no longer do integration by parts.

For the first few terms where the coefficents depends on $f^{n-1}$, we use the estimates developed in the preceding sections and for the terms where we can't perform integration by parts, we just estimate $\part^\alpha\part^\beta g^{n-1}$ in $E_T$ and use that $\norm{g^n}_{\E}\leq 2M_0$ and hence  $\norm{w^n}_{\E}\leq 4M_0$.\\
More concretely, we have the estimate
$$\int_0^T\int_x\int_v \jap{v}^{2\tilde{\omega}_{\alpha,\beta}}\der w^n\derv{'}{'}\a[v^{n-1}]\part^{v_iv_j}\derv{''}{''}g^{n-1}\lesssim T\norm{w^n}_{\tilde{E}^4_T}\norm{w^{n-1}}_{\tilde{E}^4_T}\norm{g^{n-1}}_{E_T}.$$

This follows exactly in the same way as \lref{special_holder_der_geq_9} except we don't use Young's inequality to split the product and estimate everything in $L^\infty_t$ by putting all the extra $\jap{v}$ weights on $\part^2_{v_iv_j}\derv{''}{''}g^{n-1}$. This is precisely where fact that we are estimating in a weaker weighted space helps us.

The commutator term from $\kappa\der(\jap{v}w^n)$ gives a term of the form $C(d_0,\gamma)\kappa\int_0^T\norm{w^n}_{\tilde{Y}^4_{x,v}}$. All the other terms are absorbed on the left hand side by the term with the extra $\jap{v}$ weight.

We thus get,
\begin{equation*}
\begin{split}
\norm{w_n}^2_{\tilde{E}_T^4}&\leq C(d_0,\gamma)\kappa\int_0^T\norm{w^n}^2_{\tilde{Y}^4_{x,v}}(t)\d t\\
&\quad+C(d_0,\gamma)T\norm{w^n}_{\tilde{E}^4_T}\norm{w^{n-1}}_{\tilde{E}^4_T}\norm{g^{n-1}}_{E_T},\\
&\leq C(d_0,\gamma)T\kappa\norm{w^n}^2_{\tilde{Y}^4_{T}}+C(d_0,\gamma)T\norm{w^n}_{\tilde{E}^4_T}\norm{w^{n-1}}_{\tilde{E}^4_T}\norm{g^{n-1}}_{E_T}.
\end{split}
\end{equation*}

Now we use the fact that $\norm{g^{n-1}}_{E_T}\leq M_0$ and if necessary, choosing $T$ smaller, so that,
$$C(d_0,\gamma)\kappa T\leq \frac{1}{2}\text{  and  } C(d_0,\gamma)TM_0\leq \frac{1}{4}.$$

Thus after absorbing the first term on LHS we get,
\begin{equation}\label{e.cont}
\norm{g^n-g^{n-1}}_{\tilde{E}_T^4}^2\leq \frac{1}{2}\norm{g^{n-1}-g^{n-2}}_{\tilde{E}_T^4}\norm{g^n-g^{n-1}}_{\tilde{E}_T^4}.
\end{equation}

This implies that $\{g^n\}$ is a convergent sequence and $g\in \tilde{E}_T^4$ is a classical solution of \eref{eq_for_g}.

Now since $\{g_n\}$ is uniformly bounded in $\E$, it has subsequence that converges weakly to some $\bar g \in \E$. But then it is the weak limit in $\tilde{E}_T$ which implies that $g=\bar g$ a.e. Thus $g\in \E$.

Next we handle uniqueness in the same way: if $g_1$ and $g_2$ are two solutions of \eref{eq_for_g} in $\E$ with the same initial data, then $w:=g_1-g_2$ satisfies
\begin{equation*}
\begin{split}
\partial_tw+v_i\partial_{x_i}w+\kappa\jap{v}w&=\a [g_2]\partial^2_{v_iv_j}w-\cm [g_2]w-2d(t)\a [g_2]\frac{v_i}{\jap{v}}\partial_{v_j} w\\
&\quad-d(t)\left(\frac{\delta_{ij}}{\jap{v}}-(d(t)+1)\frac{v_iv_j}{\jap{v}^2}\right)\a [g_2]w\\
&\quad+\a[w]\partial^2_{v_iv_j}g_2-\cm [w]g_2-2d(t)\a [w]\frac{v_i}{\jap{v}}\partial_{v_j} g_2\\
&\quad-d(t)\left(\frac{\delta_{ij}}{\jap{v}}-(d(t)+1)\frac{v_iv_j}{\jap{v}^2}\right)\a [w]g_2,
\end{split}
\end{equation*}
and $w(0,x,v)=0$. By the same estimate as above, Gr\"{o}nwall's inequality, we conclude that $\norm{w}_{\tilde{E}_T^4}=0$.
\end{proof}

Now \tref{soln_for_g} implies the main result, \tref{local} with $f=e^{-(d_0-\kappa t)\jap{v}}g$.
\newpage

\end{document}